% !TeX spellcheck = en_US

\documentclass[a4paper,11pt,reqno]{amsart}

%%%%%%%%%%%%%%%%%%%%%%%%%%%%%%%%%%%%%%%%%%%%%%%%%%%%%%%%%%%%%%%%%%%%%%%%%%%%%%%%
%%%%%%%%%%%%%%%%%%%%%%%%%%%%%%%%%%%%%SETTINGS%%%%%%%%%%%%%%%%%%%%%%%%%%%%%%%%%%%
%%%%%%%%%%%%%%%%%%%%%%%%%%%%%%%%%%%%%%%%%%%%%%%%%%%%%%%%%%%%%%%%%%%%%%%%%%%%%%%%

%%%%%%%%%%%%%%%%
%%% PACKAGES %%%
%%%%%%%%%%%%%%%%

\usepackage[utf8]{inputenc}
\usepackage[T1]{fontenc}			
\usepackage[english]{babel}
\usepackage{amsmath, amsfonts, amssymb, amsthm, amscd, mathtools, mathrsfs}
\usepackage{geometry}
\usepackage[square,numbers]{natbib}	
\usepackage{bbm, bm}
\usepackage{xspace} % to get the spacing after macros right
\usepackage{enumerate}
\usepackage{hyperref}
\usepackage{xcolor}
%\usepackage{color}
%\usepackage[notcite,notref]{showkeys}

%%%%%%%%%%%%%%%%
%%% COMMANDS %%%
%%%%%%%%%%%%%%%%

% Blackboard bolds

\newcommand{\bbE}{{\ensuremath{\mathbb E}} }
\newcommand{\bbF}{{\ensuremath{\mathbb F}} }

\newcommand{\bbP}{{\ensuremath{\mathbb P}} }

% Calligrafic

\newcommand{\cA}{{\ensuremath{\mathcal A}} }

\newcommand{\cD}{{\ensuremath{\mathcal D}} }
\newcommand{\cE}{{\ensuremath{\mathcal E}} }
\newcommand{\cF}{{\ensuremath{\mathcal F}} }

\newcommand{\cH}{{\ensuremath{\mathcal H}} }

\newcommand{\cK}{{\ensuremath{\mathcal K}} }
\newcommand{\cL}{{\ensuremath{\mathcal L}} }

\newcommand{\cN}{{\ensuremath{\mathcal N}} }

\newcommand{\cU}{{\ensuremath{\mathcal U}} }

\newcommand{\cX}{{\ensuremath{\mathcal X}} }
\newcommand{\cY}{{\ensuremath{\mathcal Y}} }

% Boldface

\newcommand{\bfu}{{\ensuremath{\mathbf u}} }

\newcommand{\bfS}{{\ensuremath{\mathbf S}} }

\newcommand{\bfzero}{{\ensuremath{\mathbf 0}} }

% Fraktur

%\newcommand{\frq}{{\ensuremath{\mathfrak q}} } Conflict

% Straight

\newcommand{\dd}{{\ensuremath{\mathrm d}} }
\newcommand{\de}{{\ensuremath{\mathrm e}} }

\newcommand{\ds}{{\ensuremath{\mathrm s}} }

\newcommand{\dB}{{\ensuremath{\mathrm B}} }
\newcommand{\dC}{{\ensuremath{\mathrm C}} }
\newcommand{\dD}{{\ensuremath{\mathrm D}} }

\newcommand{\dL}{{\ensuremath{\mathrm L}} }

% Script

\newcommand{\sB}{{\ensuremath{\mathscr B}} }

% Symbols kept for compatibility

%\def\1{\mathbf 1}
%\def\blu{\color{blue}}  
%\def\normal{\color{black}}

\newcommand{\Nn}{\mathcal N}

% Number sets symbols
\newcommand{\R}{\mathbb{R}}

\newcommand{\N}{\mathbb{N}}

% Probability symbols

\newcommand{\E}{\mathbb{E}}

% Other mathematical stuff
\newcommand{\ind}{\ensuremath{\mathbf{1}}}

\newcommand{\dUC}{{\ensuremath{\mathrm{UC}}} }
\newcommand{\suptwo}[2]{\sup_{\substack{#1 \\ #2}}} % sup with 2 lines
\DeclarePairedDelimiter{\abs}{\lvert}{\rvert}
\DeclarePairedDelimiter{\norm}{\lVert}{\rVert}
\DeclarePairedDelimiterX{\inprod}[2]{\langle}{\rangle}{#1, #2}
\DeclarePairedDelimiter{\prodscal}{\langle}{\rangle}
\DeclareMathOperator*{\Tr}{Tr}
\DeclareMathOperator*{\im}{Im}

\DeclareMathOperator*{\klim}{\mathcal{K}--\mathrm{lim}}

%%%%%%%%%%%%%%%%%%%%%
%%% REDEFINITIONS %%%
%%%%%%%%%%%%%%%%%%%%%

% Greek letters
\renewcommand{\epsilon}{\varepsilon}
%% Scambio \theta e \vartheta
\newcommand{\changetheta}
{
\let\temp\theta
\let\theta\vartheta
\let\vartheta\temp
}
%% Scambio \phi e \varphi
\newcommand{\changephi}
{
\let\temp\phi
\let\phi\varphi
\let\varphi\temp
}

%%%%%%%%%%%%%%%%%%%%
%%% ENVIRONMENTS %%%
%%%%%%%%%%%%%%%%%%%%

\theoremstyle{plain}
\newtheorem{theorem}{Theorem}[section]
\newtheorem*{theorem*}{Theorem}
\newtheorem{lemma}[theorem]{Lemma}
\newtheorem*{lemma*}{Lemma}
\newtheorem{proposition}[theorem]{Proposition}
\newtheorem*{proposition*}{Proposition}
\newtheorem{corollary}[theorem]{Corollary}

\theoremstyle{definition}
\newtheorem{definition}[theorem]{Definition}

\newtheorem{hypothesis}[theorem]{Hypothesis}
\newtheorem{example}[theorem]{Example}

\theoremstyle{remark}
\newtheorem{remark}[theorem]{Remark}
\newtheorem*{remark*}{Remark}

%%%%%%%%%%%%%%
%%% COLORS %%%
%%%%%%%%%%%%%%

% \definecolor{red}{rgb}{1.0,0.0,0.0}
% \def\red#1{{\textcolor{red}{#1}}}
% \definecolor{blu}{rgb}{0.0,0.0,1.0}
% \def\blu#1{{\textcolor{blu}{#1}}}
% \definecolor{violet}{rgb}{0.5,0.0,0.4}
% \def\violet#1{{\textcolor{violet}{#1}}}
\definecolor{darkviolet}{rgb}{0.58, 0.0, 0.83}

\definecolor{gre}{rgb}{0.03,0.50,0.03}

\definecolor{blu}{rgb}{0.0,0.0,1.0}

%\usepackage[usenames]{color}
%\definecolor{red}{rgb}{1.0,0.0,0.0}
%\def\red#1{{\textcolor{red}{#1}}}
%\definecolor{blu}{rgb}{0.0,0.0,1.0}
%\def\blu#1{{\textcolor{blu}{#1}}}
%\definecolor{gre}{rgb}{0.03,0.50,0.03}
%\def\gre#1{{\textcolor{gre}{#1}}}
%\definecolor{darkviolet}{rgb}{0.58, 0.0, 0.83}
%\def\dvio#1{{\textcolor{darkviolet}{#1}}}

%%%%%%%%%%%%%%%%%%%%%%
%%% OTHER SETTINGS %%%
%%%%%%%%%%%%%%%%%%%%%%

% Spacing after full stop
\frenchspacing

% Equation numbering
\numberwithin{equation}{section}

% Break long formulas
\allowdisplaybreaks

% Use alternate greek letters
%\changephi
%\changetheta

% Geometry setup
\geometry{a4paper,scale={0.72,0.75},marginratio={1:1},footskip=7mm,headsep=10mm}

% Hyperref setup
\hypersetup{pdftitle={Existence of mild solutions for HJB equations on half-spaces of Hilbert spaces},pdfauthor={Alessandro Calvia, Gianluca Cappa, Fausto Gozzi, Enrico Priola},pdftex,%
pdfstartview={XYZ null null 1.4},colorlinks=true,linkcolor=blue,urlcolor=blue}

% Command to insert e-mail addresses
\newcommand{\mail}[1]{\href{mailto:#1}{\normalfont\texttt{#1}}}

\makeatletter
\def\@setthanks{\vspace{-\baselineskip}\def\thanks##1{\@par##1\@addpunct.}\thankses}
\makeatother

% Other stuff kept for compatibility

\def\ds{\begin{displaystyle}}
\def\eds{\end{displaystyle}}

\def\<{\left\langle }
\def\>{\right\rangle }

%%%%%%%%%%%%%%%%%%%%%%%%%%%%%%%%%%%%%%%%%%%%%%%%%%%%%%%%%%%%%%%%%%%%%%%%%%%%%%%%
%%%%%%%%%%%%%%%%%%%%%%%%%%%%%%%%%%%%%%%%%%%%%%%%%%%%%%%%%%%%%%%%%%%%%%%%%%%%%%%%
%%%%%%%%%%%%%%%%%%%%%%%%%%%%%%%%%%%%%%%%%%%%%%%%%%%%%%%%%%%%%%%%%%%%%%%%%%%%%%%%

\title[HJB equations on half-spaces]{HJB equations and stochastic control on half-spaces of Hilbert spaces}

\author[A.~Calvia]{Alessandro Calvia\textsuperscript{\MakeLowercase{a},1}}
\thanks{\noindent \textsuperscript{a} Dipartimento di Economia e Finanza, LUISS University, Roma.}

\author[G.~Cappa]{Gianluca Cappa\textsuperscript{\MakeLowercase{a},2}}

\author[F.~Gozzi]{Fausto Gozzi\textsuperscript{\MakeLowercase{a},3}}

\author[E.~Priola]{Enrico Priola\textsuperscript{\MakeLowercase{b},4}}
\thanks{\noindent \textsuperscript{b} Dipartimento di Matematica "Felice Casorati", University of Pavia.
\\
\noindent \textsuperscript{1} E-mail: \mail{acalvia@luiss.it}
\\
\noindent \textsuperscript{2} E-mail: \mail{gcappa@luiss.it}
\\
\noindent \textsuperscript{3} E-mail: \mail{fgozzi@luiss.it}
\\
\noindent \textsuperscript{4} E-mail: \mail{enrico.priola@unipv.it}}

\begin{document}

\begin{abstract}
In this paper we study a first extension of the theory of mild solutions for HJB equations in Hilbert spaces to the case when the domain is not the whole space.
More precisely, we consider a half-space as domain, and a semilinear Hamilton-Jacobi-Bellman (HJB) equation. Our main goal is to establish the existence and the uniqueness of solutions to such HJB equations, that are continuously differentiable in the space variable. We also provide an application of our results to an exit time optimal control problem and we show that the corresponding value function is the unique solution to a semilinear HJB equation, possessing sufficient regularity
to express the optimal control in feedback form.
Finally, we give an illustrative example.
\end{abstract}

\maketitle

\noindent \emph{Key words}:
Stochastic control;
Second-order Hamilton-Jacobi-Bellman equations in infinite dimension;
Regular solutions;
Nonlinear Partial Differential Equations in domains;
Smoothing properties of transition semigroups.

\bigskip

\noindent \emph{AMS 2020:}
35R15 (PDEs on infinite-dimensional (e.g., function) spaces),
47D07 (Markov semigroups and applications to diffusion processes),
49L12 (Hamilton-Jacobi equations in optimal control and differential games),
49L20 (Dynamic programming in optimal control and differential games),
93E20 (Optimal stochastic control).

\bigskip

\tableofcontents

\section{Introduction}\label{sec:introduction}

A typical and nontrivial feature of optimal control problems in real applications (both in the deterministic and in the stochastic case) is the fact that the state variable must satisfy suitable constraints, i.e., to belong to a given subset $D$ of its state space $H$.
In some settings, as in the case of state constrained problems (see, e.g,~\citet[Chapter IV]{BardiCapuzzoDolcetta97}), one restricts the set of admissible control strategies and considers only those that keep the state inside the given subset $D$. In some other settings, as in exit time optimal control problems (see, e.g.,~\citet[Chapter 8]{CannarsaSinestrari04}), one does not operate such restriction but terminates the control actions (and, thus, stops the optimization problem) once the state goes out of $D$.

In both cases, standard arguments involving the dynamic programming principle provide a \emph{Hamilton-Jacobi-Bellman} (HJB) equation associated to the optimal control problem, which is a Partial Differential Equation (PDE) in the domain $D$, satisfying suitable boundary conditions.
Such type of HJB equations have been studied in many papers in the finite dimensional case, finding theorems on existence and uniqueness of viscosity solutions (see, e.g.,~\citet{soner:optcontrol1,soner:optcontrol2}) and, in some cases, results on their regularity
(see, e.g.,~\citet[Section 9]{CrandallKocanSwiech00}).
In the infinite dimensional case, the theory of existence and uniqueness of viscosity solutions
of HJB equations in domains has been studied as well, both in the deterministic
(see, e.g.,~\citet[Chapter 4]{LY})
and in the stochastic case (see, e.g.,~\citet[Chapter 3]{fabbri:soc}).

However, in general, solutions to HJB equations in the viscosity sense are not regular enough to find the optimal strategies of a control problem. Said otherwise, if one wants to find optimal strategies it is fundamental to establish results on the regularity of solutions to such HJB equations, showing that they are, at least, continuously differentiable in the state variable.
To the best of our knowledge, these results seem to be completely missing in the literature in a stochastic framework, except for some cases where explicit solutions can be found (see e.g.~\citet[Section 4.10]{fabbri:soc},
\citet{DaPratoZabczyk97}, \citet{BiffisGozziProsdocimi20}, \citet{GozziLeocata22}).

The aim of this paper is twofold: we want to establish, first, a result on the existence and the uniqueness of regular solutions (i.e., continuously differentiable in the state variable) of second-order infinite dimensional semilinear HJB equations in domains; then, we want to prove a verification theorem for the associated exit time stochastic optimal control problem, using the aforementioned existence and uniqueness result.
To do so, we extend to the case of domains the theory of mild solutions for second-order semilinear HJB equations in Hilbert spaces (for a summary on this theory see, e.g.,~\citet[Chapter 4]{fabbri:soc}).
As a starting point for future research, in this paper we consider the domain $D$ to be a half-space.

\subsection{Methodology and main results}
The starting point of our paper are global gradient estimates up to the boundary  for solutions to second-order linear PDEs in special  half-spaces of Hilbert spaces, see  Priola~\cite{Priola2001:schauder} and Chapter 5 in Priola~\cite{Priola99}.
 We also mention~\citet{Priola2003:dirichlet} for related finite-dimensional Dirichlet problems.
 We point out that in    general  domains of Hilbert spaces    local regularity  results are obtained in \cite{dapratogoldyszab96} and  \cite{talarczyk00} (see also \citet[Chapter 8]{DaPrato2002:2PDEs}). On the other hand,
 global gradient estimates for Ornstein-Uhlenbeck Dirichlet semigroups can fail even in a half space
 of $\R^2$  (see an example in \cite{wang2004}).
Using suitable extension operators from the half-space to the whole Hilbert space, we define a family of operators on the set of bounded measurable functions on the half-space. We show that this family is a semigroup of contractions, which coincides with the semigroup defined in~\citet[Chapter 8]{DaPrato2002:2PDEs}, providing the \mbox{so-called} \emph{generalized solution} to these second-order linear PDEs. We also prove some regularizing properties of this semigroup.

Next, we turn our attention to establishing existence and uniqueness of mild solutions (i.e., solutions in integral form) of semilinear HJB equations in the half-space. This result, given in Theorem~\ref{thm:esis_unic_sol_mild}, extends analogous ones provided in~\citet[Chapter 4]{fabbri:soc}, which were proved in the whole Hilbert space case.

As stated previously, we need more regular solutions to be able to find optimal strategies. Thus, in Theorem~\ref{thm:esist_strong_sol} we show that mild solutions are indeed strong solutions, in the sense they can be approximated by classical solutions. More precisely, we rely on the concept of $\cK$-convergence, see Definition~\ref{def:Kconv}.

These results can be profitably applied to study a family of stochastic optimal control problems with exit time and they permit us to prove a verification theorem (see Theorem~\ref{th:ver}). This is a sufficient condition of optimality which allows to write the optimal control in feedback form, when a solution to the so-called closed-loop equation~\eqref{eq:cle} can be found.

\subsection{Plan of the paper}

The plan of the paper is the following.
\begin{itemize}
\item Section \ref{SE:PREL} contains some important preliminary material
on second-order linear PDEs, which are needed to prove our main results.
It is divided in four subsections:
\begin{itemize}
  \item  Subsection \ref{SSE:NOTATION}, where we recall some basic notation used in this paper;
  \item  Subsection \ref{SSE:LINEAR}, where we present some results on second-order linear PDEs in half-spaces;
  \item  Subsection \ref{SSE:SPACES}, where we define some function spaces needed in the main results;
  \item  Subsection \ref{SSE:REGP}, where we prove a regularization result for the semigroup associated to linear PDEs in half-spaces.
\end{itemize}

\item In Section~\ref{sec:mildsol} we establish our first main result, namely, Theorem~\ref{thm:esis_unic_sol_mild} on the existence and uniqueness of mild solution for HJB equation~\eqref{HJB_2}.
\item In Section~\ref{sec:strongsol} we prove our second main result, i.e., Theorem~\ref{thm:esist_strong_sol}, which shows that the mild solution of HJB equation~\eqref{HJB_2} is also a $\mathcal{K}$-strong solution, that is, it can be approximated by classical solutions.
\item In Section~\ref{sec:optctrl} we establish our third main result, namely, the Verification Theorem~\ref{th:ver}, providing a sufficient condition of optimality for the optimal control problem~\eqref{eq:valuefunct}.
\end{itemize}

\section{Preliminaries}
\label{SE:PREL}

\subsection{Notation and basic spaces}
\label{SSE:NOTATION}
In this section we collect the main notations and conventions used in this research article.

Throughout the paper the set $\N$ denotes the set of natural integers $\N \coloneqq \{1, 2, \dots \}$, and the symbol $\R$ denotes the set of real numbers, equipped with the usual Euclidean norm $\abs{\cdot}_\R$. We set $\R_+ \coloneqq (0,+\infty)$. The symbol $\ind_A$ denotes the indicator function of a set $A$.

If $X$, $Y$ are two Banach spaces, the symbols $\dB_b(X;Y)$, $\dC_b(X;Y)$, $\dUC_b(X;Y)$ indicate the sets of $Y$-valued bounded Borel measurable, continuous, uniformly continuous functions on $X$, respectively. In what follows, we will just write \emph{measurable} instead of \emph{Borel measurable}. We denote by $\norm{\cdot}_{\dB_b(X;Y)}$ (resp., $\norm{\cdot}_{\dC_b(X;Y)}$, $\norm{\cdot}_{\dUC_b(X;Y)}$) the usual sup-norm, making $\dB_b(X;Y)$ (resp., $\dC_b(X;Y)$, $\dUC_b(X;Y)$) Banach spaces. If $Y = \R$, we will simply write $\dB_b(X)$, $\dC_b(X)$, $\dUC_b(X)$. The symbol $\dUC^1_b(X)$ denotes the set of all real-valued functions on $X$, that are uniformly continuous and bounded on $X$ together with their \mbox{first-order} Fréchet derivatives.

Throughout the paper, $H$ is a real separable Hilbert space, with inner product $\inprod{\cdot}{\cdot}$ and associated norm $\abs{\cdot}$. The symbol $\cL(H)$ denotes the Banach space of all linear and bounded operators from $H$ into itself, endowed with the supremum norm $\norm{\cdot}_{\cL(H)}$, and $\cL^+(H)$ indicates the subset of $\cL(H)$ consisting of all positive and self-adjoint operators. $\cL_1(H)$ indicates the set of all \emph{trace class} (or \emph{nuclear}) operators from $H$ into itself and $\cL^+_1(H) \coloneqq \cL_1(H) \cap \cL^+(H)$. If $T \in \cL_1(H)$, we define
\begin{equation*}
\Tr T \coloneqq \sum_{k \in \N} \inprod{T e_k}{e_k},
\end{equation*}
where $\{e_k\}_{k \in \N}$ is a complete orthonormal system for $H$.
The symbol $\cN(x,B)$ denotes the Gaussian measure on $H$ with mean $x \in H$ and covariance operator $B \in \cL^+_1(H)$.
The Gaussian measure on $\R$ with mean $\mu \in \R$ and variance $\sigma^2 \geq 0$, is indicated by $\cN_1(\mu, \sigma^2)$.

If $f \colon [0,+\infty) \times H \to \R$ is a differentiable function, $f_t$ denotes the derivative of $f(t,x)$ with respect to $t$ and $\dD f$, $\dD^2 f$ denote the first- and second-order Fréchet derivatives of $f(t,x)$ with respect to $x$, respectively.

From this point onward, let $\{\bar{y},e_k\}_{k\geq2}$ be a fixed orthonormal basis of $H$. We consider the open half-space of $H$ generated by $\bar y$, namely,
\begin{equation*}
H_+ \coloneqq \{x\in H \colon \prodscal{x,\bar{y}}>0\},
\end{equation*}
and we also define the sets
\begin{align*}
H_- &\coloneqq \{x\in H \colon \prodscal{x,\bar{y}}<0\},
\\
\partial H_+ &\coloneqq \{x\in H \colon \prodscal{x,\bar{y}}=0\}.
\end{align*}

We will always identify any element $x \in H$ with the sequence of Fourier coefficients $(x_k)_{k \in \N}$ with respect to $\{\bar y, e_k\}_{k \geq 2}$, where $x_1 \coloneqq \prodscal{x,\bar{y}}$ and $x_k \coloneqq \prodscal{x,e_k}$, for $k\geq2$.
It is important to recall that this identification defines an isometry between $H$ and $\ell_2$, the Hilbert space of real-valued, square-summable sequences.

We will also consider the sub-space $H'$ of $H$ generated by the system $\{e_k\}_{k\geq2}$. As above, we identify any element $x' \in H'$ with the sequence of Fourier coefficients $(x'_k)_{k \geq 2}$, which will be still denoted by $x'$. To be precise, we should identify $x'$ with the sequence $(0, x'_2, x'_3, \ldots)$ but, for the sake of brevity, we will omit the leading zero.
This notation allows to identify any element $x \in H$ with the pair $(x_1, x') \in \R \times H'$, and hence we will write $(x_1, x')$ in place of $x$ whenever necessary. Notice, also, that we have an isometry between $\partial H_+$ and $H'$.

Finally, we introduce the function spaces $\dB_0(\overline{H_+})$, $\dC_0(\overline{H_+})$, $\dUC_0(\overline{H_+})$ indicate, respectively, the sets of bounded measurable, bounded continuous, bounded uniformly continuous functions of $\overline{H_+}$ that vanish on $\partial H_+$.
It is easy to prove that all these three spaces, endowed with the supremum norm, are Banach spaces.
Other spaces, such as $\dB_b(H_+)$, can be defined similarly. We also recall that $\dUC_b(\overline{H_+}) = \dUC_b(H_+)$.

\subsection{The linear problem on the half space}
\label{SSE:LINEAR}
     
In this section we will study the following equation on the closed half-space $\overline{H_+}$
\begin{equation}\label{HJB_condizione_iniziale}
  \begin{dcases}
  v_t(t,x)=\frac{1}{2}\Tr[Q \dD^2 v(t,x)]+ \langle A^* \dD v(t,x), x\rangle, & x\in H_+, \, t>0,\\  
  v(0,x)=\phi(x), & x\in H_+,\\
  v(t,x)=0, & x\in \partial H_+, \, t \geq 0,
  \end{dcases}
\end{equation} 
where $Q\in \cL^+_1(H)$ is a positive, self-adjoint, trace class operator in $H$, $A$ is a linear operator on $H$ (whose adjoint is denoted by $A^*$) and $\phi \in \dB_b(H_+)$ is a given function.
We will work under the following hypothesis, that will stand from now on.
\begin{hypothesis}\label{ipotesi}
\begin{enumerate}[(i)]
\item[]
\item\label{ass:Agenerator} $A \colon D(A)\subset H\to H$ is the generator of a $C_0$-semigroup $\{\de^{tA}\}_{t \geq 0}$;
\item\label{ass:alpha} There exists $\alpha \in\R$ such that $A\bar{y}=\alpha\bar{y}$ and $A^* \bar y = \alpha \bar y$;
\item\label{ass:lambda} There exists $\lambda \in \R_+$ such that $Q\bar{y}=\lambda\bar{y}$;
\item\label{ass:commut} The semigroup $\{\de^{tA}\}_{t \geq 0}$ commutes with the operator $Q$;
\item\label{ass:Qt} For all $t > 0$, the operator $Q_t$ defined below is of trace class
\begin{equation}\label{eq:Qt}
Q_t \coloneqq \int_0^t \de^{sA}Q\de^{sA^*} \, \dd s;
\end{equation}
\item\label{ass:gammaQt} There exists $0<\gamma<1$ such that the operator $\int_0^t s^{-\gamma}\de^{sA}Q\de^{sA^*} \, \dd s$ is of trace class for any $t>0$;
\item\label{ass:rangeincl} For all $t>0$, $\im \de^{tA}\subset \im Q_t^{1/2}$.
\end{enumerate}
\end{hypothesis}

\begin{remark}
\begin{enumerate}[(i)]
\item[]
\item It may be possible to weaken the assumptions on $A$ and $Q$ as in~\citep{priola2000:degendirichlet, priola2002:dirichletinfinite}. 
\item Since the operator $Q_t$, defined in~\eqref{eq:Qt}, is self-adjoint and non-negative for all $t > 0$, point~\eqref{ass:Qt} of Hypothesis~\ref{ipotesi} implies that $Q_t \in \cL^+_1(H)$, for all $t > 0$.
\item As a consequence of the closed graph theorem, Hypothesis~\ref{ipotesi}-\eqref{ass:rangeincl} implies that the operator\footnote{$Q^{-1/2}_t$ is the pseudoinverse of $Q_t^{1/2}$. For a definition see, e.g., \citep[Definition~B.1]{fabbri:soc}.} $Q^{-1/2}_t\de^{tA}$ is bounded for all $t>0$ (see, e.g., \citep[(4.59)]{fabbri:soc}).
\end{enumerate}
\end{remark}
We will need also the following hypothesis.

\begin{hypothesis} \label{ipotesi_su_Lambda}
Let $T>0$. There exists  $C= C_T >0$ and $\delta\in (0,1)$ such that
\begin{equation*}
\norm{Q^{-1/2}_t\de^{tA}}_{\cL(H)}\leq C t^{-\delta},\quad t>0.
\end{equation*}
\end{hypothesis}

It is useful to recall that in the case where the linear HJB~\eqref{HJB_condizione_iniziale} is formulated in the whole space $H$, namely,
\begin{equation}\label{eq:linHJBonH}
  \begin{dcases}
  v_t(t,x)=\frac{1}{2}\Tr[Q \dD^2 v(t,x)]+\langle A^*\dD v(t,x),x\rangle, & x\in H, \, t>0, \\
  v(0,x)=\phi(x), & x\in H,\\
  \end{dcases}
\end{equation}
with $\phi \in \dB_b(H)$, the mild solution of~\eqref{eq:linHJBonH} is $v(t,x) = T_t\phi(x)$, where $\{T_t\}_{t \geq 0}$ is the semigroup associated to a specific Ornstein-Uhlenbeck process $X$.
More precisely, considering a complete filtered probability space $(\Omega, \cF, (\cF_t)_{t \geq 0}, \bbP)$, supporting a $Q$-Wiener process $W = (W(t))_{t \geq 0}$ (for a precise definition see, e.g., \citep[Definition~4.2]{DaPrato2014:stocheq}), $X$ solves the SDE
\begin{equation}\label{EDS}
\left\{
\begin{aligned}
&\dd X(t) = AX(t) \, \dd t + \dd W(t), \quad t > 0, \\
&X(0) = x.
\end{aligned}
\right.
\end{equation}
Under Hypothesis~\ref{ipotesi}, SDE~\eqref{EDS} admits a unique mild solution, denoted by $\{X(t;x)\}_{t \geq 0}$ to emphasize its dependence on the initial condition $x \in H$, given by
\begin{equation*}
X(t;x) \coloneqq \de^{tA}x +\int_0^t \de^{(t-s)A} \, \dd W(s), \quad t \geq 0.
\end{equation*}
Thanks to Hypothesis~\ref{ipotesi}-\eqref{ass:gammaQt}, $X$ has continuous trajectories (see, e.g., \citep[Theorem~1.152]{fabbri:soc}).
Moreover, the semigroup $\{T_t\}_{t \geq 0}$ has the explicit expression (cf. \citep[(4.50)]{fabbri:soc})
\begin{equation} \label{def:T_t}
T_t f(x) \coloneqq \bbE[f(X(t;x))] = \int_H f(y) \, \Nn(\de^{tA}x,Q_t)(\dd y), \quad f\in \dB_b(H).
\end{equation}

In the literature (see for instance \citep[Chapter 8]{DaPrato2002:2PDEs}) the generalized solution to~\eqref{HJB_condizione_iniziale} is defined by
\begin{equation}\label{eq:gensolHJBlin}
v(t,x) = M_t\phi(x) \coloneqq \E\left[\phi(X(t;x))\ind_{\{\bm{\tau}_x>t\}}\right], \quad t \geq 0, \, x\in H_+.   
\end{equation}
where 
\begin{equation*}
\bm{\tau}_x \coloneqq \inf\{t > 0 \colon X(t;x)\in \overline{H_{-}}\}.
\end{equation*}
The purpose of this section is to show that it is possible to define a semigroup $\{P_t\}_{t \geq 0}$ that allows to provide the mild solution to~\eqref{HJB_condizione_iniziale}. We will also show that $\{P_t\}_{t \geq 0}$ coincides with $\{M_t\}_{t \geq 0}$ (called \emph{restricted semigroup} in \citep[Chapter 8]{DaPrato2002:2PDEs}) on a suitable space. Moreover, we will prove some regularizing properties.
The idea (cf. \citep{priola2000:degendirichlet,Priola2001:schauder,priola2002:dirichletinfinite,Priola2003:dirichlet}) is to define suitable extension operators, so that it is possible to exploit the semigroup $\{T_t\}_{t \geq 0}$, given in~\eqref{def:T_t}.

For any $\eta\in   {\dB_b(\overline {H_+})}  $
and $x\in H$ (identified with $(x_1, x') \in \R \times H'$), we set
\begin{equation}\label{def:estensione}
E\eta(x) \coloneqq
\begin{dcases}
  \eta(x_1,x'),\quad &x_1 \ge  0,\\
  -\eta(-x_1,x'), \quad & x_1 <  0.
\end{dcases}
\end{equation}
By abuse of notation, we can adopt the same symbol to denote the following extension operator, defined for any $\eta\in   \dB_b(H_+)$ and $x\in H$,
\begin{equation}\label{def:estensioneap}
E\eta(x) \coloneqq
\begin{dcases}
	\eta(x_1,x'),\quad &x_1 >  0,\\
	0, \quad &x_1=0,\\
  	-\eta(-x_1,x'), \quad & x_1 <  0.
\end{dcases}
\end{equation}
Clearly, $E\eta \in \dB_b(H)$, in both cases.

We need to define a suitable counterpart in $H'$ of the semigroup $\{\de^{tA}\}_{t \geq 0}$. This can be easily done thanks to the following lemma.

\begin{lemma}\label{lem:sgrinv}
For each $t \geq 0$, the operator $\de^{tA}$ leaves $\partial H_+$ invariant.
\end{lemma}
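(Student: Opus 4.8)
The statement unwinds to the following elementary assertion: if $\prodscal{x,\bar y}=0$ then $\prodscal{\de^{tA}x,\bar y}=0$ for every $t\ge 0$. The plan is to transfer the semigroup from $x$ onto the fixed direction $\bar y$ via the adjoint, and then to exploit the fact that $\bar y$ is an eigenvector of $A^*$ coming from Hypothesis~\ref{ipotesi}-\eqref{ass:alpha}.

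First I would fix $x\in\partial H_+$ and $t\ge 0$ and write
\[
\prodscal{\de^{tA}x,\bar y}=\prodscal{x,\de^{tA^*}\bar y},
\]
using that in the Hilbert space $H$ the family of adjoints $\{(\de^{tA})^*\}_{t\ge 0}$ is itself a $C_0$-semigroup, namely $\{\de^{tA^*}\}_{t\ge 0}$, whose generator is $A^*$. The second step is to identify $\de^{tA^*}\bar y$ explicitly. By Hypothesis~\ref{ipotesi}-\eqref{ass:alpha} we have $\bar y\in D(A^*)$ and $A^*\bar y=\alpha\bar y$, so both $u(t)\coloneqq\de^{tA^*}\bar y$ and $w(t)\coloneqq\de^{\alpha t}\bar y$ solve the abstract Cauchy problem $u'(t)=A^*u(t)$ with $u(0)=\bar y$; by uniqueness of solutions to this linear problem they coincide, giving $\de^{tA^*}\bar y=\de^{\alpha t}\bar y$. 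Combining the two steps,
\[
\prodscal{\de^{tA}x,\bar y}=\prodscal{x,\de^{\alpha t}\bar y}=\de^{\alpha t}\prodscal{x,\bar y}=0,
\]
which is exactly the assertion that $\de^{tA}x\in\partial H_+$.

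There is no serious obstacle here; the proof is a one-line computation once the eigenrelation is available. The only point deserving a word of care is the passage from the eigenrelation $A^*\bar y=\alpha\bar y$ at the level of the generator to the eigenrelation $\de^{tA^*}\bar y=\de^{\alpha t}\bar y$ at the level of the semigroup. I would justify this either by the uniqueness argument above, or equivalently by noting that the one-dimensional subspace $\R\bar y$ is invariant under the resolvent $(\lambda-A^*)^{-1}$ for $\lambda$ in the resolvent set (since $(\lambda-A^*)\bar y=(\lambda-\alpha)\bar y$), and hence under $\de^{tA^*}$ by the exponential/resolvent representation of the semigroup. Either route is standard and avoids any regularity subtlety, since $\bar y$ already lies in $D(A^*)$.
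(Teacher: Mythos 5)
Your proof is correct and follows essentially the same route as the paper: pass to the adjoint via $\prodscal{\de^{tA}x,\bar y}=\prodscal{x,\de^{tA^*}\bar y}$ and use the eigenrelation $A^*\bar y=\alpha\bar y$ from Hypothesis~\ref{ipotesi}-\eqref{ass:alpha} to get $\de^{tA^*}\bar y=\de^{\alpha t}\bar y$, whence the inner product vanishes. Your extra care in justifying the passage from the generator-level eigenrelation to the semigroup-level one (via uniqueness for the abstract Cauchy problem, legitimate since $\bar y\in D(A^*)$) is a detail the paper takes for granted, but it is the same argument.
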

\begin{proof}
Fix $t \geq 0$. By~Hypothesis~\ref{ipotesi}-\eqref{ass:alpha} $\bar y$ is an eigenvector of $A^*$, with eigenvalue $\alpha$. Therefore, we get that, for all $x \in \partial H_+$,
\begin{equation*}
\inprod{\de^{tA}x}{\bar y} = \inprod{x}{\de^{tA^*}\bar y} = \de^{\alpha t} \inprod{x}{\bar y} = 0,
\end{equation*}
whence the claim.
\end{proof}
Using the isometry between $\partial H_+$ and $H'$, here denoted by $\gamma \colon \partial H_+ \to H'$, we can define the family of operators $\tilde S(t) \colon H' \to H'$, given by
\begin{equation*}
\tilde S(t)x' \coloneqq \gamma\left(\de^{tA}(0,x')\right), \quad x' \in H', \, t \geq 0.
\end{equation*}
It is immediate to prove that each of the operators $\tilde S(t)$, $t \geq 0$, is linear and bounded. We also have the following result, whose proof is omitted because it is a standard consequence of the invariance property proved in Lemma~\ref{lem:sgrinv}.

\begin{lemma}
The family of operators $\{\tilde S(t) \}_{t\geq0}$ is a semigroup on $H'$.
\end{lemma}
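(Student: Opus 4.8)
The plan is to verify directly the two defining properties of a one-parameter semigroup, namely $\tilde S(0) = I_{H'}$ and $\tilde S(t+s) = \tilde S(t)\tilde S(s)$ for all $t, s \geq 0$. The only nontrivial ingredient is Lemma~\ref{lem:sgrinv}, which guarantees that $\de^{tA}$ maps $\partial H_+$ into itself; this is exactly what makes each $\tilde S(t)$ well-defined, since it ensures that $\de^{tA}(0,x')$ lies in the domain $\partial H_+$ of $\gamma$. Throughout I would use that $\gamma \colon \partial H_+ \to H'$ is a bijective isometry whose inverse is given explicitly by $\gamma^{-1}(y') = (0,y')$, under the pair identification of $H$ with $\R \times H'$.

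For the identity at $t = 0$, I would simply observe that $\de^{0 \cdot A}$ is the identity on $H$, so that $\tilde S(0)x' = \gamma((0,x')) = x'$ for every $x' \in H'$, giving $\tilde S(0) = I_{H'}$.

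For the composition law I would fix $x' \in H'$ and $s, t \geq 0$ and set $z \coloneqq \de^{sA}(0,x')$. By Lemma~\ref{lem:sgrinv}, $z \in \partial H_+$, and by definition $\tilde S(s)x' = \gamma(z)$. Setting $y' \coloneqq \gamma(z) \in H'$, the key step is then to feed $y'$ back into $\tilde S(t)$: using $\gamma^{-1}(y') = (0,y')$ one has $(0,y') = z = \de^{sA}(0,x')$, whence
\begin{equation*}
\tilde S(t)\tilde S(s)x' = \tilde S(t)y' = \gamma\bigl(\de^{tA}(0,y')\bigr) = \gamma\bigl(\de^{tA}\de^{sA}(0,x')\bigr) = \gamma\bigl(\de^{(t+s)A}(0,x')\bigr) = \tilde S(t+s)x',
\end{equation*}
the penultimate equality being the semigroup law of $\{\de^{tA}\}_{t \geq 0}$ on $H$.

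Since this is essentially bookkeeping, I do not anticipate a genuine obstacle; the single point requiring attention is the repeated passage between $H'$ and $\partial H_+$ via $\gamma$ and $\gamma^{-1}$, and in particular the need for the intermediate vector $z$ to remain in $\partial H_+$ so that $\tilde S(t)$ may legitimately be applied to $y'$ --- which is precisely what Lemma~\ref{lem:sgrinv} supplies.
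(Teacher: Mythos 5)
Your proof is correct and is exactly the argument the paper has in mind: the paper omits the proof, describing it as ``a standard consequence of the invariance property proved in Lemma~\ref{lem:sgrinv}'', and your verification of $\tilde S(0)=I_{H'}$ and of the composition law via $\gamma$, $\gamma^{-1}(y')=(0,y')$, and the invariance of $\partial H_+$ under $\de^{tA}$ fills in precisely that bookkeeping. Nothing is missing, since linearity and boundedness of each $\tilde S(t)$ are handled separately in the paper and the lemma claims only the algebraic semigroup property.
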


Thanks to Hypothesis~\ref{ipotesi}-\eqref{ass:lambda}, also the operator $Q$ leaves $\partial H_+$ invariant.
Therefore, we can define the operator $\tilde Q \coloneqq H'\rightarrow H'$ as:
\begin{equation*}
\tilde Q x' \coloneqq \gamma\left(Q(0,x')\right), \quad x' \in H'.
\end{equation*}
Finally, we define
\begin{equation*}
\tilde Q_t \coloneqq \int_0^t \tilde S(s)\tilde Q\tilde S^*(s) \, \dd s, \quad t > 0.
\end{equation*}
Thanks to the isometry $\gamma$ and recalling that $Q \in \cL^+_1(H)$, it is easy to show that also $\tilde Q \in \cL^+_1(H)$. Moreover, considering in addition Hypothesis~\ref{ipotesi}-\eqref{ass:gammaQt}, we have that $\tilde Q_t \in \cL^+_1(H)$, for all $t > 0$.

\begin{remark}
By Hypothesis~\ref{ipotesi}-\eqref{ass:alpha}-\eqref{ass:lambda}, we get
\begin{equation*}
Q_t\bar y = \int_0^t \de^{sA}Q\de^{sA^*}\bar y \, \dd s= g(t) \bar y, \quad t > 0,
\end{equation*}
where
\begin{equation*}
g(t) \coloneqq \lambda \int_0^t \de^{2\alpha s} \, \dd s = \dfrac{\lambda}{2\alpha}(\de^{2\alpha t}-1), \quad t > 0.
\end{equation*}
We deduce that, for any $x\in H$, $x=(x_1,x')$, the Gaussian measure $\Nn(\de^{tA}x,Q_t)$ can be split as follows
\begin{equation*}
\Nn(\de^{tA}x,Q_t)=\Nn_1(\de^{\alpha t}x_1,g(t))\otimes \Nn(\tilde S(t)x',\tilde Q_t), \quad t > 0,
\end{equation*}
where the Gaussian measure on $H'$ is still denoted by $\cN$. 
\end{remark}
 The following lemma provides some useful formulas (cf.~\citep{Priola2003:dirichlet} for the finite-dimensional case).
\begin{lemma}\label{lem:explicitsemigroupT}
Let us define, for all $t \geq 0$, $\theta \in \R$, and $\xi \in \R_+$
\begin{equation}\label{eq:G}
G(t,\theta,\xi) \coloneqq \dfrac{1}{\sqrt{2\pi g(t)}}\left[\exp\left\{-\dfrac{(\theta\de^{\alpha t}-\xi)^2}{2g(t)}\right\} - \exp\left\{-\dfrac{(\theta\de^{\alpha t}+\xi)^2}{2g(t)}\right\}\right].
\end{equation}
Then, for any $\eta\in \dB_b({H_+})$ and any $x\in H$,
\begin{equation}\label{eq:TtE}
T_tE\eta(x)=\int_{H'} \left[\int_{\R_+} G(t, x_1, \xi) \eta(\xi,y')\, \dd\xi\right] \Nn(\tilde S(t)x',\tilde Q_t)(\dd y').
\end{equation}
\end{lemma}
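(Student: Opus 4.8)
The plan is to compute $T_t E\eta(x)$ directly from the Gaussian representation~\eqref{def:T_t} of the semigroup $\{T_t\}_{t \geq 0}$, exploiting the product structure of the measure $\Nn(\de^{tA}x, Q_t)$ recorded in the preceding Remark, and then to collapse the first-coordinate integral onto $\R_+$ using the odd reflection built into the extension operator $E$.

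First I would write, for $x = (x_1, x') \in \R \times H'$,
\[
T_t E\eta(x) = \int_H E\eta(y) \, \Nn(\de^{tA}x, Q_t)(\dd y),
\]
and invoke the factorization $\Nn(\de^{tA}x, Q_t) = \Nn_1(\de^{\alpha t}x_1, g(t)) \otimes \Nn(\tilde S(t)x', \tilde Q_t)$ from the Remark. Since $\eta$, and hence $E\eta$, is bounded and the two factors are probability measures, Fubini's theorem applies and yields
\[
T_t E\eta(x) = \int_{H'} \left[\int_{\R} E\eta(y_1, y') \, \Nn_1(\de^{\alpha t}x_1, g(t))(\dd y_1)\right] \Nn(\tilde S(t)x', \tilde Q_t)(\dd y').
\]

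Next I would evaluate the inner integral using definition~\eqref{def:estensioneap} of $E$. Splitting $\R = (0,+\infty) \cup \{0\} \cup (-\infty, 0)$ and noting that $\Nn_1(\de^{\alpha t}x_1, g(t))$ assigns no mass to $\{0\}$, this integral equals
\[
\int_0^{+\infty} \eta(y_1, y') \, p(y_1) \, \dd y_1 - \int_{-\infty}^0 \eta(-y_1, y') \, p(y_1) \, \dd y_1,
\]
where $p(\cdot)$ is the density of $\Nn_1(\de^{\alpha t}x_1, g(t))$. In the second integral I would substitute $\xi = -y_1$, turning it into $\int_0^{+\infty} \eta(\xi, y') \, p(-\xi) \, \dd\xi$. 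Recombining the two integrals over $\R_+$ produces the kernel
\[
p(\xi) - p(-\xi) = \frac{1}{\sqrt{2\pi g(t)}}\left[\exp\left\{-\frac{(\xi - \de^{\alpha t}x_1)^2}{2g(t)}\right\} - \exp\left\{-\frac{(\xi + \de^{\alpha t}x_1)^2}{2g(t)}\right\}\right],
\]
which is precisely $G(t, x_1, \xi)$ as defined in~\eqref{eq:G}. Substituting back into the $H'$-integral gives the claimed formula~\eqref{eq:TtE}.

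I do not expect a genuine obstacle here: the argument is a bookkeeping computation. The only points requiring care are arranging the product measure so that the $y'$-integration sees exactly $\Nn(\tilde S(t)x', \tilde Q_t)$ while the $y_1$-integration sees exactly the one-dimensional Gaussian, and tracking the sign and parity in the change of variables $\xi = -y_1$, since it is this step that converts the odd extension into the \emph{difference} of Gaussian exponentials defining $G$.
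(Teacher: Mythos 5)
Your proposal is correct and follows essentially the same route as the paper's proof: both use the product factorization $\Nn(\de^{tA}x,Q_t)=\Nn_1(\de^{\alpha t}x_1,g(t))\otimes \Nn(\tilde S(t)x',\tilde Q_t)$ together with Fubini, split off the negative half-line, and apply the change of variables $\xi=-y_1$ so that the odd extension produces the difference of Gaussian densities $p(\xi)-p(-\xi)=G(t,x_1,\xi)$. The only cosmetic difference is the order of operations (you factorize the measure before splitting, whereas the paper first splits the integral over $H$ into $H_+$ and $H_-$), which does not affect the argument.
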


{\allowdisplaybreaks
\begin{proof}
Let $\eta\in \dB_b({H_+})$ and $x=(x_1,x')\in H$, where $x_1\in \R$ and $x'\in H'$. Then,
\begin{align*}
  &\mathrel{\phantom{=}}T_tE\eta(x)=\int_H E\eta(y)\ \Nn(\de^{tA}x,Q_t)(\dd y)\\
  &=\int_{H_+}\eta(y_1,y')\ \Nn(\de^{tA}x,Q_t)(\dd y)-\int_{H_-}\eta(-y_1,y')\ \Nn(\de^{tA}x,Q_t)(\dd y)\\
  &=\int_{H'}\left(\int_0^{+\infty} \eta(\xi,y') \ \Nn_1(\de^{\alpha t}x_1,g(t))(\dd\xi)\right) \Nn(\tilde S(t)x',\tilde Q_t)(\dd y')\\
  &\qquad -\int_{H'} \left(\int_{-\infty}^0 \eta(-\xi,y')\ \Nn_1(\de^{\alpha t}x_1,g(t))(\dd\xi)\right) \Nn(\tilde S(t)x',\tilde Q_t)(\dd y')\\
  &=\int_{H'}\left(\int_0^{+\infty} \eta(\xi,y') \ \Nn_1(\de^{\alpha t}x_1,g(t))(\dd\xi)\right) \Nn(\tilde S(t)x',\tilde Q_t)(\dd y')\\
  &\qquad -\int_{H'} \left(\int_0^{+\infty} \eta(\xi,y')\ \Nn_1(-\de^{\alpha t}x_1,g(t))(\dd\xi)\right) \Nn(\tilde S(t)x',\tilde Q_t)(\dd y')\\
  &=\int_{H'} \left[\int_{\R_+} \dfrac{1}{\sqrt{2\pi g(t)}}\left[\exp\left\{-\dfrac{(\de^{\alpha t}x_1-\xi)^2}{2g(t)}\right\} - \exp\left\{-\dfrac{(\de^{\alpha t}x_1+\xi)^2}{2g(t)}\right\}\right]\eta(\xi,y')\,\dd\xi\right]\times\\
  &\qquad \times\Nn(\tilde S(t)x',\tilde Q_t)(\dd y'). \qedhere
\end{align*}
\end{proof}
}

We introduce, next, a family of operators on $\dB_b(\overline{H_+})$.

\begin{definition}\label{def:Pt}
For any $\eta\in \dB_b(\overline{H_+})$ and any $x\in \overline{H_+}$, we define the family of operators $\{P_t\}_{t \geq 0}$ as
\begin{equation*}
P_t\eta(x) \coloneqq R T_t E\eta(x), \quad t \geq 0,
\end{equation*}
where $Rf$ is the restriction of $f\in \dB_b(H)$ to $\overline {H_+}$.
\end{definition}

\begin{remark}\label{rem:Pt_aperto}
Using the extension defined in~\eqref{def:estensioneap}, we immediately see that the family $\{P_t\}_{t \geq 0}$ is uniquely defined on functions $\eta \in \dB_b({H_+})$. To see this, it is enough to recall that
\begin{equation*}
\Nn(\de^{tA}x,Q_t) (\partial H_+) = 0, \quad t >0, \, x \in H.
\end{equation*}
Note that for this reason, while in~\eqref{def:estensioneap} we choose to extend $\eta$ to be null on $\partial H_+$, one can opt for an arbitrarily different measurable extension on the boundary of $H_+$ and still obtain a well-defined family $\{P_t\}_{t \geq 0}$ on $\dB_b({H_+})$. It is also worth remembering that, in any case, $P_t \eta$ is a function defined on $\overline{H_+}$, for all $t \geq 0$.
\end{remark}

\begin{proposition}\label{prop_P_t}
The family of operators $\{P_t\}_{t \geq 0}$ is a semigroup of contractions on $\dB_b(\overline{H_+})$. Moreover,
\begin{enumerate}[(i)]
  \item\label{prop:PtUC0} $P_t(\dB_b(\overline {H_+}))\subset \dUC_0(\overline{H_+})$ and $P_t(\dB_b(H_+))\subset \dUC_0(\overline{H_+})$, for all $t>0$.
  \item For every $f\in  \dC_0(\overline{H_+})$ and $x\in \overline{H_+}$, the map $t\mapsto P_t f(x)$, defined on $[0,+\infty)$ with real values, is continuous.
\end{enumerate}
\end{proposition}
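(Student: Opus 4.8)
The plan is to reduce every assertion to known properties of the Ornstein--Uhlenbeck semigroup $\{T_t\}_{t\geq0}$ via the explicit representation of Lemma~\ref{lem:explicitsemigroupT}, the decisive ingredient being the odd symmetry of the kernel $G$. Writing $\sigma(x_1,x')=(-x_1,x')$ for the reflection across $\partial H_+$, a direct inspection of~\eqref{eq:G} gives $G(t,-x_1,\xi)=-G(t,x_1,\xi)$ for all $t>0$, $x_1\in\R$, $\xi\in\R_+$, together with $G(t,0,\xi)=0$. Substituting into~\eqref{eq:TtE} shows that, for any $\eta\in\dB_b(H_+)$, the function $T_tE\eta$ is odd with respect to $\sigma$ and vanishes on $\partial H_+$; equivalently, $E R T_t E\eta = T_tE\eta$ on all of $H$ (the boundary included, since both sides vanish there). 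In particular $P_t\eta=RT_tE\eta$ vanishes on $\partial H_+$ for every $t>0$.

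The contraction property is then immediate: $\norm{E\eta}_{\dB_b(H)}=\norm{\eta}_{\dB_b(\overline{H_+})}$, the operator $T_t$ is a contraction on $\dB_b(H)$ (being an average against the probability measure $\Nn(\de^{tA}x,Q_t)$), and restriction does not increase the sup-norm, so $\norm{P_t\eta}\le\norm{\eta}$. For the semigroup law I would combine the reflection identity with $T_tT_s=T_{t+s}$: since $E(P_s\eta)=ERT_sE\eta=T_sE\eta$, we obtain
\begin{equation*}
P_tP_s\eta = RT_tE(P_s\eta)=RT_t T_sE\eta = RT_{t+s}E\eta=P_{t+s}\eta,
\end{equation*}
while $P_0=\mathrm{id}$ on $\overline{H_+}$ is clear from $T_0=I$ and $RE\eta=\eta$. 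This is the step I expect to be the crux: it is precisely the oddness of $T_sE\eta$ that turns the reflection construction into an honest semigroup, and without it the composition $P_tP_s$ would not collapse onto $T_{t+s}$.

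For assertion~\eqref{prop:PtUC0} I would invoke the smoothing (strong Feller) property of $T_t$. By Hypothesis~\ref{ipotesi}-\eqref{ass:rangeincl} the operator $Q_t^{-1/2}\de^{tA}$ is bounded for each $t>0$ (cf.\ the Remark following Hypothesis~\ref{ipotesi}), and the Bismut--Elworthy type formula for $\dD T_t g$ yields $\norm{\dD T_t g}_{\dB_b(H)}\le \norm{Q_t^{-1/2}\de^{tA}}_{\cL(H)}\norm{g}_{\dB_b(H)}$ for $g\in\dB_b(H)$; hence $T_tg$ is Lipschitz, so $T_tg\in\dUC_b(H)$ for $t>0$. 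Taking $g=E\eta$ and restricting gives $P_t\eta\in\dUC_b(\overline{H_+})$, and by the first paragraph $P_t\eta$ vanishes on $\partial H_+$, so $P_t\eta\in\dUC_0(\overline{H_+})$. The same argument, using the extension~\eqref{def:estensioneap}, covers $\eta\in\dB_b(H_+)$, the boundary being $\Nn(\de^{tA}x,Q_t)$-null (cf.\ Remark~\ref{rem:Pt_aperto}).

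Finally, for~(ii) the key observation is that the odd extension of a function vanishing on $\partial H_+$ is \emph{globally} continuous: for $f\in\dC_0(\overline{H_+})$ one has $Ef\in\dC_b(H)$, precisely because the two one-sided limits at $x_1=0$ both equal $f(0,x')=0$. Then $P_tf(x)=T_tEf(x)=\E[Ef(X(t;x))]$, and since $X(\cdot;x)$ has continuous trajectories by Hypothesis~\ref{ipotesi}-\eqref{ass:gammaQt} and $Ef$ is bounded and continuous, dominated convergence gives continuity of $t\mapsto\E[Ef(X(t;x))]$ on $[0,+\infty)$; the value at $t=0$ equals $f(x)$, which matches $P_0f(x)=f(x)$.
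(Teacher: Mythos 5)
Your proof is correct and takes essentially the same route as the paper's: the oddness of the kernel $G$ gives the extension--restriction identity $ERT_tE\eta = T_tE\eta$, from which the semigroup law follows by composition, point~(i) rests on the smoothing property $T_t(\dB_b(H))\subset \dUC_b(H)$ together with $G(t,0,\xi)=0$, and point~(ii) uses that the odd extension of a function in $\dC_0(\overline{H_+})$ is continuous on all of $H$. The only differences are presentational: you spell out the contraction estimate and the Bismut--Elworthy gradient bound that the paper cites from the literature, and you treat~(ii) uniformly at all $x\in\overline{H_+}$ via path continuity and dominated convergence, where the paper splits into the interior case and the boundary case.
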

\begin{proof}
We prove, first, the semigroup property for $\{P_t\}_{t \geq 0}$. Applying the definition of the restriction $R$, the fact that $G$, defined in \eqref{eq:G}, is an odd function in the second argument, and using \eqref{eq:TtE}, we get that, for all $\eta \in \dB_b(\overline{H_+})$,
\begin{equation}\label{eq:extrestrsgr}
ER T_tE\eta(x)=T_tE\eta(x), \quad x \in H, \quad t \geq 0.
\end{equation}
From this equality, we obtain that, for all $t,s\geq0$,
\begin{equation*}
P_t P_s \eta= P_t(R T_s E\eta)=R T_t E(RT_sE\eta)=R T_t T_sE\eta=R T_{t+s}E\eta= P_{t+s}\eta,
\end{equation*}
i.e., the semigroup property. Now we can prove the others statements.
\begin{enumerate}[(i)]
\item Let $f \in \dB_b(\overline{H_+})$ (the proof in the case where $f \in \dB_b(H_+)$ is identical).
Then, recalling that $Ef\in \dB_b(H)$ and that $T_t \colon \dB_b(H)\to \dUC_b(H)$, for all $t>0$, we get $P_t f\in \dUC_b(\overline{H_+})$, for all $t>0$.
Now, consider $z=(0,z')\in\partial H_+$. Then, by Lemma~\ref{lem:explicitsemigroupT}, since $G(t,0,\xi)=0$, for all $t \geq 0$ and all $\xi \in \R_+$, we get
\begin{equation*}
T_tEf(z)=\int_{H'} \left[\int_{\R_+} G(t, 0, \xi) f(\xi,y')\, \dd\xi\right] \Nn(\tilde S(t)z',\tilde Q_t)(\dd y')=0,
\end{equation*}
whence $P_tf(z) = 0$. Therefore, $P_tf \in \dUC_0(\overline{H_+})$, for all $t > 0$.
\item If $x \in H_+$, the continuity in $t$ of $P_t f(x)$ follows from the continuity of $T_t f(x)$. If $x \in \partial H_+$, it suffices to observe that $P_t f(x) = 0$, for all $t \geq 0$, since $f(x) = 0$ and thanks to point~\eqref{prop:PtUC0}. \qedhere
\end{enumerate}
\end{proof}

We are now ready to show the following important result on semigroups $\{P_t\}_{t \geq 0}$ and $\{M_t\}_{t \geq 0}$.

\begin{proposition}
\label{identita_semigruppi}
On $\dB_b(H_+)$ the identity $P_t=M_t$ holds true for any $t\geq0$.
\end{proposition}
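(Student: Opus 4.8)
The plan is to reduce the evaluation of $M_t\eta(x)$ to a one–dimensional problem for the first coordinate of the Ornstein–Uhlenbeck process, and then to recognize the resulting kernel as the function $G$ of Lemma~\ref{lem:explicitsemigroupT}, thereby matching $M_t\eta$ with $T_tE\eta = P_t\eta$. Fix $\eta\in\dB_b(H_+)$ and $x=(x_1,x')\in H_+$, so $x_1>0$, and set $Y(s)\coloneqq\inprod{X(s;x)}{\bar y}$. By Hypothesis~\ref{ipotesi}-\eqref{ass:alpha}-\eqref{ass:lambda}, $\bar y$ is an eigenvector of $A^*$ and of $Q$, so $Y$ is a one–dimensional Ornstein–Uhlenbeck process solving $\dd Y=\alpha Y\,\dd s+\sqrt{\lambda}\,\dd B$ with $Y(0)=x_1$, where $B$ is a standard Brownian motion built from the $\bar y$–component of $W$; its time–$t$ law is $\Nn_1(\de^{\alpha t}x_1,g(t))$, consistent with the product decomposition of $\Nn(\de^{tA}x,Q_t)$ recorded above. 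Since $\overline{H_-}=\{z:\inprod{z}{\bar y}\le0\}$, the exit time depends only on $Y$, namely $\bm{\tau}_x=\inf\{s>0:Y(s)\le0\}$. Moreover, because $\inprod{Q\bar y}{b}=\lambda\inprod{\bar y}{b}=0$ for every $b\in H'$, the noise driving $Y$ is independent of the noise driving the $H'$–valued component $X'(\cdot;x)$; hence $\{\bm{\tau}_x>t\}\in\sigma(Y(s):s\le t)$ is independent of $X'(t;x)\sim\Nn(\tilde S(t)x',\tilde Q_t)$.

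Conditioning on $X'(t;x)=y'$ and using this independence, I would write
\[
M_t\eta(x)=\int_{H'}\E\!\left[\eta(Y(t),y')\,\ind_{\{\bm{\tau}_x>t\}}\right]\Nn(\tilde S(t)x',\tilde Q_t)(\dd y').
\]
The heart of the matter is then the reflection principle for $Y$: for every bounded measurable $g$ on $\R_+$,
\[
\E\!\left[g(Y(t))\,\ind_{\{\bm{\tau}_x>t\}}\right]=\int_{\R_+}G(t,x_1,\xi)\,g(\xi)\,\dd\xi .
\]
To prove this I would use the strong Markov property of $Y$ at $\tau\coloneqq\bm{\tau}_x$ together with the reflection symmetry $y\mapsto-y$ of the dynamics: the drift $\alpha y$ being odd, $-Y$ is again an Ornstein–Uhlenbeck process with the same coefficients, so the process started at $0$ has a law symmetric about $0$. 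Concretely, by continuity of paths any trajectory from $x_1>0$ to a level $-\xi<0$ must hit $0$, whence $\P_{x_1}(Y(t)\in-\dd\xi,\,\tau\le t)=\P_{x_1}(Y(t)\in-\dd\xi)$, while the reflection symmetry gives $\P_{x_1}(Y(t)\in\dd\xi,\,\tau\le t)=\P_{x_1}(Y(t)\in-\dd\xi,\,\tau\le t)$. Subtracting $\P_{x_1}(Y(t)\in\dd\xi,\,\tau\le t)$ from $\P_{x_1}(Y(t)\in\dd\xi)$ then yields the killed density $p(t,x_1,\xi)-p(t,x_1,-\xi)$, which is exactly $G(t,x_1,\xi)$ by its definition in~\eqref{eq:G}.

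Inserting this identity (with $g(\cdot)=\eta(\cdot,y')$) into the conditioned expression reproduces precisely the right–hand side of~\eqref{eq:TtE}, so $M_t\eta(x)=T_tE\eta(x)=P_t\eta(x)$ for all $x\in H_+$; the case $t=0$ is immediate since trajectories start in the open half-space, so $\bm{\tau}_x>0$ almost surely and both operators equal the identity. I expect the main obstacle to be the rigorous justification of the reflection principle: one must carefully invoke the strong Markov property at the hitting time of the closed set $\{Y\le0\}$ and the reflection invariance of the Ornstein–Uhlenbeck law, relying on the continuity of trajectories guaranteed by Hypothesis~\ref{ipotesi}-\eqref{ass:gammaQt}. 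The factorization into the independent coordinates, although conceptually transparent from the product decomposition above, also requires the path–level independence of $\{Y(s):s\le t\}$ from $X'(t;x)$, which follows from the $Q$–invariance of $\mathrm{span}(\bar y)$ and of $H'$.
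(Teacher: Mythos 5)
Your proposal is correct and follows essentially the same route as the paper: split $X$ into the one-dimensional Ornstein--Uhlenbeck coordinate $Y=\inprod{X}{\bar y}$ and the independent $H'$-valued component, observe that the exit time depends only on $Y$, condition using this independence (the paper cites \citep[Proposition~2.12]{DaPrato2014:stocheq} for the same purpose), and identify the killed one-dimensional kernel with $G$ from Lemma~\ref{lem:explicitsemigroupT} so that $M_t\eta = T_tE\eta = P_t\eta$. The only difference is that you additionally justify the killed OU density formula via the reflection principle (strong Markov property at $\bm{\tau}_x$ plus the sign symmetry of the OU dynamics), a step the paper writes down directly without proof --- and your version of the kernel, with mean $\de^{\alpha t}x_1$, is the correct one, consistent with \eqref{eq:G}.
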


\begin{proof}
First, we set a useful notation. For any $x=(x_1,x')\in H_+$, where $x_1\in \R_+$ and $x'\in H'$, we can write the solution $X(t,x)$ to SDE~\eqref{EDS} as
\[X(t,x) \coloneqq (X^1(t,x_1),X'(t,x')),\]
where $X^1(t,x_1)$ is a stochastic process with values in $\R_+$ while $X'(t,x')$ is a stochastic process with values in $H'$. Therefore
\[
\bm{\tau}_x=\inf\{t > 0 \colon X(t, x)\in \overline{H_{-}}\}=\inf\{t > 0 \colon X^1(t, x_1)=0\} \eqqcolon \tau_1.
\]

Let $f\in \dB_b(H_+)$. Then, using the tower property, we get
\begin{align*}
M_tf&(x)=\E[f(X(t,x))\ind_{\{\bm{\tau}_x>t\}}]=\E[f(X^1(t,x_1),X'(t,x'))\ind_{\{\bm{\tau}_x>t\}}]\\
=&\E\left[\E\left[f(X^1(t,x_1),X'(t,x'))\ind_{\{\tau_1>t\}} \mid X'(t,x')\right] \right].
\end{align*}
Noting that $X^1(t,x_1)$ and $X'(t,x')$ are independent (see \citep[Proposition 2.12]{DaPrato2014:stocheq}), we have
\begin{align*}
M_tf&(x)=\E\left[\int_0^\infty\left(\frac{\de^{-(x_1-\xi)^2/(2g(t))}-\de^{-(x_1+\xi)^2/(2g(t))}}{\sqrt{2\pi g(t)}}\right)f(\xi,y) \, \dd\xi \, \Biggm| \, y=X'(t,x')\right]\\
&=\int_{H'}\left[ \int_0^\infty\left(\frac{\de^{-(x_1-\xi)^2/(2g(t))}-\de^{-(x_1+\xi)^2/(2g(t))}}{\sqrt{2\pi g(t)}}\right)f(\xi,y) \, \dd\xi \right] \Nn(\tilde S(t)x',\tilde Q_t)(\dd y) \\
&=P_tf(x). \qedhere
\end{align*}
\end{proof}

\subsection{Some useful function spaces}
\label{SSE:SPACES}

In this section we introduce some further function spaces, that will be needed in the sequel.
Let $T>0$ and set
\begin{align*}
\dB_0([0,T] \times \overline{H_+}) &\coloneqq \{f \in \dB_b([0,T] \times \overline{H_+}) \text{ s.t. } \psi(t,x) = 0, \, \forall t \in [0,T], \, x \in \partial H_+\}, \\
\dC_0([0,T] \times \overline{H_+}) &\coloneqq \{f \in \dC_b([0,T] \times \overline{H_+}) \text{ s.t. } f(t,x) = 0, \, \forall t \in [0,T], \, x \in \partial H_+\}.
\end{align*}
These are Banach spaces when endowed with the supremum norm and, clearly, the latter is a subspace of the former.

Next, for $\delta\in(0,1)$ as in Hypothesis~\ref{ipotesi_su_Lambda}, $\cX$ denoting $H$, $\overline{H_+}$, or $H_+$, and $\cY$ indicating either $H$ or $\R$, define
\begin{align*}
\dB_{b,\delta}((0,T]\times \cX;\cY) &\coloneqq \{f \colon (0,T]\times \cX\to \cY\text{ measurable, s.t. } f \in \dB_b([t,T]\times \cX;\cY)\\
&\qquad \, \forall t\in(0,T) \text{ and } \{(t,x) \mapsto t^\delta f(t,x)\} \in \dB_b((0,T]\times \cX;\cY)\},\\
\dC_{b,\delta}((0,T]\times \cX;\cY) &\coloneqq \{f \colon (0,T]\times \cX\to \cY \text{ measurable, s.t. } f \in \dC_b([t,T]\times \cX;\cY) \\
&\qquad \, \forall t\in(0,T) \text{ and } \{(t,x) \mapsto t^\delta f(t,x)\} \in \dC_b((0,T]\times \cX;\cY)\}.
\end{align*}
The latter is a subspace of the former space. If $\cY = \R$ we will simply denote them by $\dB_{b,\delta}((0,T]\times \cX)$ and $\dC_{b,\delta}((0,T]\times \cX)$. We endow them with the following norm, making them Banach spaces:
\begin{equation*}
\norm{f}_{\dB_{b,\delta}} \coloneqq \sup_{(t,x) \in (0,T]\times \cX} t^\delta |f(t,x)|_{\cY}.
\end{equation*}
If $f \in \dC_{b,\delta}((0,T]\times \cX;\cY)$, we will write $\norm{f}_{\dC_{b,\delta}}$.
We also introduce
\begin{align*}
\dB^{0,1}_{b,\delta}([0,T]\times \overline{H_+}) &\coloneqq \{f \in \dB_0([0,T]\times \overline{H_+}) \text{ s.t. there exists } \dD f \in \dB_{b,\delta}((0,T]\times H_+;H)\},\\
\dC^{0,1}_{b,\delta}([0,T]\times \overline{H_+}) &\coloneqq \{f \in \dC_0([0,T]\times \overline{H_+}) \text{ s.t. there exists } \dD f \in \dC_{b,\delta}((0,T]\times H_+;H)\}.
\end{align*}
Also in this case, the latter is a subspace of the former space. We endow them with the norm:
\begin{equation*}
\norm{f}_{\dB^{0,1}_{b,\delta}} \coloneqq \norm{f}_{\dB_b([0,T]\times \overline{H_+})} + \norm{\dD f}_{\dB_{b,\delta}((0,T]\times H_+;H)}.
\end{equation*}

\subsection{Regularization property of $P_t$}
\label{SSE:REGP}

In this section we show the regularization property of the semigroup $\{P_t\}_{t \geq 0}$, introduced in Definition~\ref{def:Pt}, and we provide some joint \mbox{time-space} regularity properties of this semigroup that will be particularly useful in the next sections.

We recall that, under Hypotheses~\ref{ipotesi} and~\ref{ipotesi_su_Lambda}, the semigroup $\{T_t\}_{t \geq0}$ satisfies
\begin{equation}\label{eq:gradTtestimate}
\|\dD T_t f\|_{\dUC_b(H)}\leq \frac{C}{t^\delta}\|f\|_{\dB_b(H)},\quad f\in \dB_b(H), \, t \in (0,T],
\end{equation}
for some positive constant $C = C_T$ independent of $f$ (see, e.g., \citep[Chapter~6]{DaPrato2002:2PDEs}).

\begin{proposition}
\label{prop:regolarita}
Let $f\in \dB_b({H_+})$. Then, $P_t f\in \dUC^1_b(\overline{H_+})$ and there exists $C>0$ independent of $f$ such that
\[|\dD P_t f(x)|\leq  \frac{C}{t^\delta}\|f\|_{\dB_b({H_+})},\]
for all $x\in H_+$ and all $t \in (0,T]$.
\end{proposition}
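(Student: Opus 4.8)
The plan is to reduce the statement directly to the whole-space smoothing estimate~\eqref{eq:gradTtestimate} for the Ornstein--Uhlenbeck semigroup $\{T_t\}_{t\geq0}$, exploiting the definition $P_t f = R\,T_t E f$ from Definition~\ref{def:Pt} together with the fact that the odd extension $E$ preserves the sup-norm. No new smoothing has to be proved: all the analytic content is already packaged in~\eqref{eq:gradTtestimate}, and what remains is to transport it through the extension $E$ and the restriction $R$.

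First I would record that, for $f\in\dB_b(H_+)$, the extension $Ef$ given by~\eqref{def:estensioneap} lies in $\dB_b(H)$ and satisfies $\|Ef\|_{\dB_b(H)}=\|f\|_{\dB_b(H_+)}$, since $Ef$ simply reflects the values of $f$ across $\partial H_+$ up to a sign (and a choice on the $\Nn(\de^{tA}x,Q_t)$-null boundary, which is irrelevant by Remark~\ref{rem:Pt_aperto}). Then I would apply~\eqref{eq:gradTtestimate} to $g=Ef$: for every $t\in(0,T]$ the function $T_tEf$ belongs to $\dUC^1_b(H)$ and
\[
\|\dD T_tEf\|_{\dUC_b(H)}\leq\frac{C}{t^\delta}\|Ef\|_{\dB_b(H)}=\frac{C}{t^\delta}\|f\|_{\dB_b(H_+)}.
\]
Since $P_tf=R\,T_tEf$ is the restriction of $T_tEf$ to $\overline{H_+}$, and $\overline{H_+}$ is a closed convex set with nonempty interior $H_+$, the restriction inherits the boundedness and uniform continuity of both $T_tEf$ and its Fréchet derivative, so $P_tf\in\dUC^1_b(\overline{H_+})$; evaluating at any $x\in H_+$ gives $|\dD P_tf(x)|=|\dD T_tEf(x)|\leq C t^{-\delta}\|f\|_{\dB_b(H_+)}$, which is exactly the asserted bound.

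The only delicate point, and the closest thing to an obstacle, is the identification of the Fréchet derivative of the restricted map $P_tf$ on $\overline{H_+}$ with the restriction of $\dD T_tEf$, in particular at boundary points of $\partial H_+$. This is harmless here because $T_tEf$ is genuinely differentiable on all of $H$, hence on an open neighbourhood of every point of $\overline{H_+}$, so no one-sided difference-quotient subtleties arise and the derivative of the restriction coincides with the restriction of the derivative everywhere on $\overline{H_+}$. Everything else is a direct transfer of the whole-space estimate through the isometric extension $E$ and the restriction $R$, with the constant $C=C_T$ carried over unchanged from~\eqref{eq:gradTtestimate}.
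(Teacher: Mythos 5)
Your proof is correct and follows essentially the same route as the paper's: both reduce the claim to the whole-space smoothing estimate~\eqref{eq:gradTtestimate} applied to the extension $Ef$, use $\|Ef\|_{\dB_b(H)}\leq\|f\|_{\dB_b(H_+)}$, and transfer the bound through $P_t f = RT_tEf$ by restriction. Your additional remark identifying the Fréchet derivative of the restriction with the restriction of $\dD T_tEf$ at points of $\partial H_+$ is a harmless refinement that the paper leaves implicit.
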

\begin{proof}
Let $f\in \dB_b({H_+}) $. Then, $Ef\in \dB_b(H)$ and using \eqref{eq:gradTtestimate}
\[\|\dD T_t Ef\|_{\dUC_b(H)} \leq \frac{C}{t^\delta}\|Ef\|_{\dB_b(H)}.\]
Recalling that $P_t=RT_tE$, we have
\[\|\dD P_t f\|_{\dUC_b(H_+)}\leq \|\dD T_t Ef\|_{\dUC_b(H)}\leq \frac{C}{t^\delta}\|Ef\|_{\dB_b(H)}\leq \frac{C}{t^\delta}\|f\|_{\dB_b({H_+})}. \qedhere\]
\end{proof}

\begin{proposition}
\label{prop:continuita_misurabilita}
Let $T>0$ and define the sets
\begin{equation*}
I_0 \coloneqq \{(s,t) \colon 0<s\leq t\leq T\}, \qquad I_1 \coloneqq \{(s,t) \colon 0<s< t\leq T\}.
\end{equation*}
Suppose that Hypotheses~\ref{ipotesi} and~\ref{ipotesi_su_Lambda} are satisfied, for some $\delta \in (0,1)$. Then,
\begin{enumerate}[(i)]
\item\label{prop:contmeas_phi0} For every $\eta\in \dB_b(\overline{H_+})$, the function $\eta^0_P \colon [0,T]\times \overline{H_+}\to \R$, defined as
\begin{equation}\label{eq:eta0P}
\eta^0_P(t,x) \coloneqq P_t \eta(x), \qquad (t,x) \in [0,T]\times \overline{H_+},
\end{equation}
belongs to $\dB_b([0,T] \times \overline{H_+}) \cap \dC_b((0,T]\times \overline{H_+})$ and satisfies $\eta^0_P(t,x) = 0$, for all $(t,x) \in (0,T] \times \partial H_+$.
\item\label{prop:contmeas_psi0} For every $\psi\in \dB_{b,\delta}((0,T]\times \overline{H_+})$, the function $\bar\psi^0_P \colon I_0\times \overline{H_+}\to \R$, defined as
\begin{equation*}
\bar\psi^0_P(t,s,x) \coloneqq  P_{t-s}[\psi(s,\cdot)](x), \quad (t,s,x) \in I_0\times \overline{H_+},
\end{equation*}
is measurable and the function\footnote{Defined to be $0$ for $t=0$, by continuity.} $\psi^0_P \colon [0,T]\times \overline{H_+}\to \R$, defined as
\begin{equation*}
\psi^0_P(t,x) \coloneqq \int_0^t P_{t-s}[\psi(s,\cdot)](x) \, \dd s, \quad (t,x) \in [0,T]\times \overline{H_+},
\end{equation*}
belongs to $\dC_0([0,T] \times \overline{H_+})$.
\normalcolor
\item\label{prop:contmeas_phi1} For every $\eta\in \dB_b(\overline{H_+})$, the function $\eta^1_P \colon (0,T]\times H_+\to H$, defined as
\begin{equation*}
\eta^1_P(t,x) \coloneqq \dD P_t[\eta](x), \quad (t,x) \in (0,T]\times H_+,
\end{equation*}
belongs to $\dC_{b,\delta}((0,T]\times H_+;H)$.
\item\label{prop:contmeas_psi1} For every $\psi\in \dB_{b,\delta}((0,T]\times \overline{H_+})$, the function $\bar\psi^1_P \colon I_1\times H_+\to H$, defined as
\begin{equation*}
\bar\psi^1_P(t,s,x) \coloneqq \dD P_{t-s}[\psi(s,\cdot)](x), \quad (t,s,x) \in I_1\times H_+,
\end{equation*}
is measurable and the function $\psi^1_P \colon (0,T]\times H_+\to H$, defined as
\begin{equation*}
\psi^1_P(t,x) \coloneqq \int_0^t \dD P_{t-s}[\psi(s,\cdot)](x) \, \dd s, \quad (t,x) \in (0,T]\times H_+,
\end{equation*}
belongs to $\dC_{b,\delta}((0,T]\times H_+;H)$.
\end{enumerate}
\end{proposition}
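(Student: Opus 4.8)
The plan is to handle the four claims in order, each time reducing the time–space statement to the purely spatial regularity of $\{P_t\}_{t\ge0}$ already available (Propositions~\ref{prop_P_t} and~\ref{prop:regolarita}) and then controlling the time integrals by dominated convergence. For (i), boundedness of $\eta^0_P$ is the contraction bound of Proposition~\ref{prop_P_t}, and $\eta^0_P(t,\cdot)=0$ on $\partial H_+$ for $t>0$ is Proposition~\ref{prop_P_t}(i). Joint continuity on $(0,T]\times\overline{H_+}$ I would read from the representation~\eqref{eq:TtE}: the inner integral $\int_{\R_+}G(t,x_1,\xi)\eta(\xi,y')\,\dd\xi$ is continuous in $(t,x_1)$ for $t>0$ by dominated convergence, since $G$ in~\eqref{eq:G} is jointly continuous and dominated by Gaussian densities, while the outer integration against the non-degenerate kernel $\Nn(\tilde S(t)x',\tilde Q_t)$ depends continuously on $(t,x')$. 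Since $\eta^0_P(0,\cdot)=\eta$ is measurable and $\eta^0_P$ is continuous on $(0,T]\times\overline{H_+}$, the function is Borel on $[0,T]\times\overline{H_+}$.

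For (ii), joint measurability of $\bar\psi^0_P$ on $I_0\times\overline{H_+}$ follows from a Carathéodory argument: at fixed $s$ the map $(t,x)\mapsto P_{t-s}[\psi(s,\cdot)](x)$ is continuous on $\{t>s\}\times\overline{H_+}$ by part (i), while at fixed $(t,x)$ measurability in $s$ comes from~\eqref{eq:TtE} and the joint measurability of $\psi$. The defining integral converges absolutely because $\|\psi(s,\cdot)\|_{\dB_b}\le s^{-\delta}\|\psi\|_{\dB_{b,\delta}}$ and $\int_0^t s^{-\delta}\,\dd s<\infty$ (as $\delta<1$); the same bound gives boundedness and $\sup_x|\psi^0_P(t,x)|\to0$ as $t\to0$, matching the value $0$ at $t=0$, and vanishing on $\partial H_+$ is inherited termwise. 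Continuity on $(0,T]\times\overline{H_+}$ follows by writing $\psi^0_P(t,x)=\int_0^T\ind_{\{s<t\}}P_{t-s}[\psi(s,\cdot)](x)\,\dd s$ and applying dominated convergence, the integrand converging pointwise for $s\ne t_0$ by part (i) at the fixed function $\psi(s,\cdot)$ under the fixed integrable majorant $s^{-\delta}\|\psi\|_{\dB_{b,\delta}}$. Hence $\psi^0_P\in\dC_0([0,T]\times\overline{H_+})$.

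For (iii), on the open set $H_+$ differentiation commutes with restriction, so $\eta^1_P(t,x)=\dD T_t(E\eta)(x)$; Proposition~\ref{prop:regolarita} gives $t^\delta|\eta^1_P(t,x)|\le C\|\eta\|_{\dB_b(H_+)}$, i.e.\ boundedness of $t^\delta\eta^1_P$, and continuity on $(0,T]\times H_+$ reduces to the joint $(t,x)$-continuity of the Ornstein–Uhlenbeck gradient $\dD T_t g$ with $g=E\eta$, a standard property for $t>0$ following from the formula $\langle\dD T_t g(x),h\rangle=\int_H\langle Q_t^{-1/2}\de^{tA}h,Q_t^{-1/2}(y-\de^{tA}x)\rangle g(y)\,\Nn(\de^{tA}x,Q_t)(\dd y)$ together with the continuity of $t\mapsto Q_t^{-1/2}\de^{tA}$ and of the Gaussian kernels. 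Thus $\eta^1_P\in\dC_{b,\delta}((0,T]\times H_+;H)$. For (iv), measurability of $\bar\psi^1_P$ is obtained exactly as in (ii), now invoking part (iii) for the continuity at fixed $s$; moreover Proposition~\ref{prop:regolarita} and $\|\psi(s,\cdot)\|_{\dB_b}\le s^{-\delta}\|\psi\|_{\dB_{b,\delta}}$ give $|\dD P_{t-s}[\psi(s,\cdot)](x)|\le C(t-s)^{-\delta}s^{-\delta}\|\psi\|_{\dB_{b,\delta}}$, and since $\int_0^t(t-s)^{-\delta}s^{-\delta}\,\dd s=c_\delta\,t^{1-2\delta}$ with $c_\delta:=\int_0^1(1-\sigma)^{-\delta}\sigma^{-\delta}\,\dd\sigma<\infty$, we obtain $t^\delta|\psi^1_P(t,x)|\le Cc_\delta\|\psi\|_{\dB_{b,\delta}}\,t^{1-\delta}$, bounded on $(0,T]$.

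The hard part is the continuity in (iv). In contrast with (ii), the operator $\dD P_{t-s}$ is singular as $s\uparrow t$ and this singularity moves with $t$, so no single integrable majorant is available and plain dominated convergence fails; the natural substitution $s=t\sigma$ that would pin the singularity replaces $\psi(s,\cdot)$ by $\psi(t\sigma,\cdot)$, which destroys pointwise convergence because $\psi$ is only measurable in time. I would resolve both issues by truncation. For $\epsilon\in(0,T)$ set $\psi^1_{P,\epsilon}(t,x):=\int_0^{(t-\epsilon)^+}\dD P_{t-s}[\psi(s,\cdot)](x)\,\dd s$; here $t-s\ge\epsilon$, so keeping $s$ as the integration variable the integrand is continuous in $(t,x)$ at fixed $s$ by part (iii) and is dominated by the fixed integrable function $C\epsilon^{-\delta}s^{-\delta}\|\psi\|_{\dB_{b,\delta}}$, whence each $\psi^1_{P,\epsilon}$ is continuous by dominated convergence. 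Finally the tail bound $t^\delta|\psi^1_P(t,x)-\psi^1_{P,\epsilon}(t,x)|\le Ct^\delta\|\psi\|_{\dB_{b,\delta}}\int_{(t-\epsilon)^+}^t(t-s)^{-\delta}s^{-\delta}\,\dd s$ tends to $0$ uniformly in $(t,x)$ as $\epsilon\to0$, so $t^\delta\psi^1_{P,\epsilon}\to t^\delta\psi^1_P$ uniformly on $(0,T]\times H_+$ and the limit $\psi^1_P$ belongs to $\dC_{b,\delta}((0,T]\times H_+;H)$.
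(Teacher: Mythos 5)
Your route is genuinely different from the paper's. The paper's proof is a pure transfer argument: since $P_t = R\,T_t\,E$ and $E\eta\in\dB_b(H)$, $E\psi\in\dB_b((0,T]\times H)$, all four items are read off directly from Propositions~4.50 and~4.51 of \citep{fabbri:soc} applied to the whole-space semigroup $\{T_t\}_{t\geq 0}$, with only the boundary-vanishing statements checked by hand via Proposition~\ref{prop_P_t}-(i). You instead re-derive the time--space regularity from scratch. The parts of your argument that are genuinely yours are sound: in (ii), the majorant $\norm{\psi(s,\cdot)}_{\dB_b}\leq s^{-\delta}\norm{\psi}_{\dB_{b,\delta}}$ with dominated convergence, the uniform bound $\sup_x\abs{\psi^0_P(t,x)}\leq t^{1-\delta}\norm{\psi}_{\dB_{b,\delta}}/(1-\delta)$ at $t=0$, and the termwise boundary vanishing all work; and in (iv) the truncation at $(t-\epsilon)^+$, the fixed majorant $C\epsilon^{-\delta}s^{-\delta}$ for each truncation, and the uniform weighted tail estimate $t^\delta\int_{(t-\epsilon)^+}^{t}(t-s)^{-\delta}s^{-\delta}\,\dd s = O(\epsilon^{1-\delta})$ (checked by splitting $t\leq 2\epsilon$ and $t>2\epsilon$) correctly circumvent the moving singularity where plain dominated convergence fails. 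This buys self-containedness, in substance redoing the proofs of the results the paper simply cites.

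There is, however, one genuine gap, and it is load-bearing. In (i) your dominated-convergence argument on \eqref{eq:TtE} controls only the $(t,x_1)$-dependence. The inner integral $h_{t,x_1}(y') \coloneqq \int_{\R_+}G(t,x_1,\xi)\eta(\xi,y')\,\dd\xi$ is merely bounded \emph{measurable} in $y'$, because $\eta$ is only measurable; the kernel $G$ smooths in the $x_1$-direction, not in the $H'$-directions. Consequently, continuity of $(t,x')\mapsto\int_{H'}h_{t,x_1}(y')\,\Nn(\tilde S(t)x',\tilde Q_t)(\dd y')$ does not follow from ``continuity of the Gaussian kernels'': weak continuity of the measure family is useless against a measurable integrand, and what is actually needed is convergence in total variation, i.e.\ the strong Feller/smoothing property of the reduced Ornstein--Uhlenbeck semigroup on $H'$, which has to be extracted from Hypotheses~\ref{ipotesi}-(vii) and~\ref{ipotesi_su_Lambda} through the splitting $Q_t = g(t)\oplus\tilde Q_t$, $\de^{tA}=\de^{\alpha t}\oplus\tilde S(t)$. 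A cleaner repair within your framework: for $t$ near $t_0>0$ write $P_t\eta = P_{t-t_0/2}\,g$ with $g\coloneqq P_{t_0/2}\eta\in\dUC_0(\overline{H_+})$ by Proposition~\ref{prop_P_t}-(i); then pointwise $t$-continuity (Proposition~\ref{prop_P_t}-(ii)) combined with the equi-Lipschitz bound $\norm{\dD P_r g}\leq Cr^{-\delta}\norm{g}$ of Proposition~\ref{prop:regolarita}, uniform for $r$ in compact subsets of $(0,T]$, yields joint continuity. The same issue recurs in (iii): joint continuity of $(t,x)\mapsto \dD T_t E\eta(x)$ for merely measurable data is exactly the content of the cited result in \citep{fabbri:soc} (it requires, among other things, continuity properties of $t\mapsto Q_t^{-1/2}\de^{tA}$), so it must either be cited, as the paper does, or proved rather than declared standard. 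Since your continuity arguments in (ii) and (iv) rest on (i) and (iii) respectively, these two points are where your proposal, as written, does not yet close.
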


\begin{proof}
We recall, first, the definition of semigroup $\{P_t\}_{t \geq 0}$. For any $ f \in \dB_b(\overline {H_+}) $,
\begin{equation*}
P_t f(x)=R T_t Ef(x), \quad x \in \overline{H_+},
\end{equation*}
where $E \colon \dB_b(\overline{H_+})\to \dB_b(H)$ is the extension operator defined in~\eqref{def:estensione}, $R$ is the restriction to the half-plane $\overline{H_+}$ and $T_t$ is the semigroup defined in~\eqref{def:T_t}. Recall, also, that $P_t f \in \dUC_0(\overline{H_+})$, for all $t > 0$, by Proposition~\ref{prop_P_t}-(\ref{prop:PtUC0}).

To prove the properties listed above, we use the fact that semigroup $\{T_t\}_{t \geq 0}$ verifies the assumptions of Propostion~4.50 and Proposition~4.51 in~\citep{fabbri:soc}
\begin{enumerate}[(i)]
\item Take $\eta \in \dB_b(\overline{H_+})$ and consider its extension $E\eta \in \dB_b(H)$. Combining Proposition~4.50-(i) and Proposition~4.51-(i) of~\citep{fabbri:soc}, we have that $(t,x) \mapsto T_t E \eta(x) \in \dB_b([0,T] \times H) \cap \dC_b((0,T] \times H)$. Therefore, since $\eta^0_P(t,x) = RT_tE \eta(x) = T_tE \eta(x)$, for all $(t,x) \in [0,T] \times \overline{H_+}$, we get that $\eta^0_P \in \dB_b([0,T] \times \overline{H_+}) \cap \dC_b((0,T]\times \overline{H_+})$. Moreover, by  Proposition~\ref{prop_P_t}-(\ref{prop:PtUC0}), we have that $\eta^0_P(t,x) = 0$, for all $0 < t \leq T$ and $x \in \partial H_+$.
\item Fix $\psi\in \dB_{b,\delta}((0,T]\times \overline{H_+})$ and consider, for all $t > 0$, the extension $E\psi(t, \cdot) \in \dB_b(H)$. Since
\begin{equation}\label{eq:extension_timespace}
E\psi(t,x) =
\begin{dcases}
\psi(t,x_1,x'), &x_1 \geq 0, \\
-\psi(t,-x_1,x'), &x_1 < 0,
\end{dcases}
\quad (t,x) = (t,x_1,x') \in (0,T] \times H,
\end{equation}
we immediately deduce that $(t,x) \mapsto E\psi(t,x) \in \dB_b((0,T] \times H)$. Therefore, applying~\citep[Propostion~4.50-(ii)]{fabbri:soc}, we get that the map
\begin{equation*}
(t,s,x) \mapsto T_{t-s}[E\psi(s, \cdot)](x), \quad (t,s,x) \in I_0 \times H,
\end{equation*}
is measurable, and hence, noting that
\begin{equation*}
\bar \psi^0_P(t,s,x) = RT_{t-s}[E\psi(s, \cdot)](x) = T_{t-s}[E\psi(s, \cdot)](x), \quad (t,s,x) \in I_0 \times \overline{H_+},
\end{equation*}
we get that $\bar \psi^0_P$ is measurable.
Next, combining Propostion~4.50-(ii) and Proposition~4.51-(ii) of~\citep{fabbri:soc}, we have that the map $(t,x) \mapsto \int_0^t T_{t-s}[E\psi(s,\cdot](x) \, \dd s$ belongs to $\dB_b([0,T] \times H) \cap \dC_b((0,T] \times H)$, and hence, observing that
\begin{equation*}
\psi^0_P(t,x) = \int_0^t RT_{t-s}[E\psi(s,\cdot](x) \, \dd s = \int_0^t T_{t-s}[E\psi(s,\cdot](x) \, \dd s, \quad (t,x) \in [0,T] \times \overline{H_+},
\end{equation*}
we get that $\dB_b([0,T] \times \overline{H_+}) \cap \dC_b((0,T] \times \overline{H_+})$.
We are, thus, left to show that $\psi^0_P(t,x) = 0$, for all $(t,x) \in [0,T] \times \partial H_+$. This is obvious for $t = 0$. Fix $t > 0$ and note that, by Proposition~\ref{prop_P_t}-(\ref{prop:PtUC0}), $P_{t-s}[\psi(s,\cdot)](x) = 0$, for all $0 \leq s < t$ and all $x \in \partial H_+$. Therefore,
\begin{equation*}
\psi^0_P(t,x) = \int_0^t P_{t-s}[\psi(s,\cdot)](x) \, \dd s = 0, \quad (t,x) \in [0,T] \times \partial H_+,
\end{equation*}
whence the claim.
\item Take $\eta \in \dB_b(\overline{H_+})$ and consider its extension $E\eta \in \dB_b(H)$. By \citep[Proposition~4.51-(iii)]{fabbri:soc}, applied with $G=I$, $U=H$ and $\gamma_G(t)=t^{-\delta}$, we have that $(t,x) \mapsto \dD T_t  E \eta(x) \in \dC_{b,\delta}((0,T] \times H; H)$. Therefore, since $\eta^1_P(t,x) = \dD RT_tE \eta(x) = \dD T_tE \eta(x)$, for all $(t,x) \in (0,T] \times \overline{H_+}$, we get the claim.
\item Arguing as in the proof of point~(\ref{prop:contmeas_psi0}), the claim follows applying Proposition~4.50-(iv) and Proposition~4.51-(iv) of~\citep{fabbri:soc}, with $G=I$, $U=H$, $\gamma_G(t)=\eta(t)=t^{-\delta}$. \qedhere
\end{enumerate}
\end{proof}

\begin{remark}\label{rem:sgr_funz_ap}
Using the extension defined in~\eqref{def:estensioneap}, it is immediate to show that the same results of Proposition~\ref{prop:continuita_misurabilita} hold if one considers functions $\eta \in \dB_b(H_+)$ in points~\eqref{prop:contmeas_phi0} and~\eqref{prop:contmeas_phi1} and functions $\psi\in \dB_{b,\delta}((0,T]\times H_+)$ in points~\eqref{prop:contmeas_psi0} and~\eqref{prop:contmeas_psi1}, respectively.
\end{remark}

\begin{remark}\label{rem:funznullealbordo}
Recall that the semigroup $\{P_t\}_{t \geq 0}$ is uniquely defined on $\dB_b({H_+})$ (see Remark~\ref{rem:sgr_funz_ap}). Therefore, a consequence of Proposition~\ref{prop:continuita_misurabilita}-(\ref{prop:contmeas_phi0}) is that, if $\eta \in \dB_b(H_+)$, then the function $\eta^0_P$ appearing in~\eqref{eq:eta0P} belongs to $\dB_0([0,T] \times \overline{H_+}) \cap \dC_b((0,T]\times \overline{H_+})$.
Moreover, if $\eta \in \dC_0(\overline{H_+})$, then $\eta^0_P \in \dC_0([0,T] \times \overline{H_+})$.
\end{remark}

\section{Mild solutions of HJB equations}\label{sec:mildsol}
The purpose of this section is to establish the existence and the uniqueness of the mild solution (see Definition~\ref{definizione:mild_solution} below) to the following HJB equation
\begin{equation}\label{HJB_2}
\left\{
  \begin{aligned}
  &v_t(t,x)=\frac{1}{2}\Tr[Q\dD^2v(t,x)]+ \langle A^* \dD v(t,x), x\rangle\\
	&\qquad \qquad \quad + F(t,x,v(t,x),\dD v(t,x)), & &x\in H_+, \, t\in(0,T],\\
  &v(0,x)=\phi(x), & &x\in H_+, \\
  &v(t,x)=0, & &x\in \partial H_+, \, t\in [0,T],
  \end{aligned}
\right.
\end{equation}
where $T > 0$ is a given time horizon (which will be fixed from now on), $F$ and $\phi$ are given measurable functions. We introduce the following assumption.

\begin{hypothesis}\label{ipotesi_su_F}
The measurable functions $F \colon [0,T]\times H_+\times\R\times H\to \R$ and $\phi \colon H_+\to\R$ verify, for given constants $L, L'>0$, the following:
\begin{enumerate}[(i)]
\item\label{ass:Flip} For any $t\in[0,T]$, $x\in H_+$, $y_1,y_2\in\R$, and $z_1,z_2\in H$, it results
 \[|F(t,x,y_1,z_1)-F(t,x,y_2,z_2)|_\R\leq L(|y_1-y_2|_\R+|z_1-z_2|).\]
\item\label{ass:Flin} For any $t\in[0,T]$, $x\in H_+$, $y\in\R$, and $z\in H$, we have  \[|F(t,x,y,z)|_\R\leq L'(1+|y|_\R+ |z|).\]
\item\label{ass:phi} $\phi\in \dB_b(H_+)$.
\end{enumerate}
\end{hypothesis}

\begin{definition}\label{definizione:mild_solution}
A function $u \colon [0,T]\times \overline{H_+} \to \R$ is a mild solution to the HJB equation~\eqref{HJB_2} if
\begin{enumerate}[(i)]
\item There exists $\eta \in (0,1)$ such that
$u\in \dB^{0,1}_{b,\eta}([0,T]\times \overline{H_+})$;
\item for all $t\in[0,T]$ and all $x\in H_+$, the following equality holds
\begin{equation}\label{mild_form}
u(t,x)=P_t\phi(x) + \int_0^t P_{t-s}[F(s,\cdot,u(s,\cdot),\dD u(s,\cdot)](x) \, \dd s.
\end{equation}
\end{enumerate}
\end{definition}

The following theorem establishes existence and uniqueness of the mild solution to~\eqref{HJB_2}, in the sense of Definition~\ref{definizione:mild_solution}.

\begin{theorem}\label{thm:esis_unic_sol_mild}
Let $\delta \in (0,1)$ be such that Hypotheses~\ref{ipotesi}, \ref{ipotesi_su_Lambda} and~\ref{ipotesi_su_F} are satisfied. Then, Equation~\eqref{HJB_2} has a mild solution
$u\in \dB^{0,1}_{b,\delta}([0,T]\times \overline{H_+})$,
which is unique in this space.
Moreover, $u$ is continuous in $(0,T]\times \overline{H_+}$ and $\dD u$ is continuous in
$(0,T]\times H_+$.
Finally, if $\phi\in \dC_b(H_+)$, $u$ is also continuous in $[0,T]\times H_+$.
\end{theorem}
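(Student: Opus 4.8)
The plan is to recast the mild formulation~\eqref{mild_form} as a fixed point problem in the Banach space $\dB^{0,1}_{b,\delta}([0,T]\times\overline{H_+})$ and to solve it by the contraction mapping principle. For $u\in\dB^{0,1}_{b,\delta}([0,T]\times\overline{H_+})$ define
\[
\Gamma(u)(t,x)\coloneqq P_t\phi(x)+\int_0^t P_{t-s}\bigl[F(s,\cdot,u(s,\cdot),\dD u(s,\cdot))\bigr](x)\,\dd s ,
\]
so that $u$ is a mild solution if and only if $u=\Gamma(u)$. The first summand $P_t\phi$ is independent of $u$ and already lies in the target space: by Proposition~\ref{prop:continuita_misurabilita}-(i),(iii) the map $(t,x)\mapsto P_t\phi(x)$ belongs to $\dB_0([0,T]\times\overline{H_+})$ and its gradient to $\dC_{b,\delta}((0,T]\times H_+;H)$, with the quantitative control furnished by the gradient estimate of Proposition~\ref{prop:regolarita}.

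Next I would check that $\Gamma$ maps $\dB^{0,1}_{b,\delta}$ into itself. The essential point is that, for $u$ in this space, the forcing term $g(s,x)\coloneqq F(s,x,u(s,x),\dD u(s,x))$ belongs to $\dB_{b,\delta}((0,T]\times H_+)$. Indeed, by the linear growth Hypothesis~\ref{ipotesi_su_F}-(ii),
\[
s^\delta\abs{g(s,x)}_\R\le L'\bigl(s^\delta(1+\norm{u}_{\dB_b})+s^\delta\abs{\dD u(s,x)}\bigr)\le L'\bigl(T^\delta(1+\norm{u}_{\dB_b})+\norm{\dD u}_{\dB_{b,\delta}}\bigr),
\]
which is bounded uniformly in $(s,x)$. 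Then Proposition~\ref{prop:continuita_misurabilita}-(ii),(iv), via Remark~\ref{rem:sgr_funz_ap}, shows that the convolution term lies in $\dC_0([0,T]\times\overline{H_+})$ and that its Fréchet gradient, obtained by differentiating under the integral sign, equals $\int_0^t\dD P_{t-s}[g(s,\cdot)]\,\dd s$ and belongs to $\dC_{b,\delta}((0,T]\times H_+;H)$. Hence $\Gamma(u)\in\dB^{0,1}_{b,\delta}([0,T]\times\overline{H_+})$, and in fact in its continuous subspace.

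The core of the argument is the contraction estimate, which I expect to be the main obstacle. For $u_1,u_2$ set $h(s,x)\coloneqq F(s,x,u_1,\dD u_1)-F(s,x,u_2,\dD u_2)$; the Lipschitz Hypothesis~\ref{ipotesi_su_F}-(i) gives $\norm{h(s,\cdot)}_{\dB_b}\le L(\norm{u_1-u_2}_{\dB_b}+s^{-\delta}\norm{\dD(u_1-u_2)}_{\dB_{b,\delta}})$. Using that $\{P_t\}_{t\ge0}$ is a contraction (for the sup-norm part) and the gradient estimate $\abs{\dD P_{t-s}h(s,\cdot)(x)}\le C(t-s)^{-\delta}\norm{h(s,\cdot)}_{\dB_b}$ of Proposition~\ref{prop:regolarita} (for the derivative part), I would bound both the sup-norm and the weighted gradient seminorm of $\Gamma(u_1)-\Gamma(u_2)$. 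The decisive point is to control the superposition of the two singularities $s^{-\delta}$ (coming from $\dD u$) and $(t-s)^{-\delta}$ (coming from the smoothing of $P_{t-s}$): the relevant quantity is
\[
t^\delta\int_0^t (t-s)^{-\delta}s^{-\delta}\,\dd s=t^{1-\delta}B(1-\delta,1-\delta),
\]
(substitute $s=tr$), which tends to $0$ as $t\downarrow0$ since $\delta<1$, while the companion term $t^\delta\int_0^t(t-s)^{-\delta}\,\dd s=t/(1-\delta)$ behaves likewise; the $t^\delta$ weight exactly compensates the two singularities. Thus there is $T_0\in(0,T]$, depending only on $L$, $C$ and $\delta$, making $\Gamma$ a contraction on $\dB^{0,1}_{b,\delta}([0,T_0]\times\overline{H_+})$. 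Because the iterated kernels gain integrability, the constants controlling the powers $\Gamma^n$ decay super-exponentially, so $\Gamma^n$ is a contraction on the whole interval $[0,T]$ for $n$ large; by the standard extension of Banach's theorem this yields a unique fixed point in $\dB^{0,1}_{b,\delta}([0,T]\times\overline{H_+})$ (alternatively, one concatenates the local solutions on $[0,T_0],[T_0,2T_0],\dots$, using the endpoint value, which is bounded with bounded gradient, as new initial datum).

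Finally, the asserted continuity is read off from the identity $u=\Gamma(u)$. Writing $u=\eta^0_P+\psi^0_P$ and $\dD u=\eta^1_P+\psi^1_P$ with $\eta=\phi$ and $\psi=g\in\dB_{b,\delta}$, Proposition~\ref{prop:continuita_misurabilita}-(i),(ii) gives $u\in\dC_b((0,T]\times\overline{H_+})$, and parts (iii),(iv) give continuity of $\dD u$ on $(0,T]\times H_+$. For the last claim, when $\phi\in\dC_b(H_+)$ the convolution part $\psi^0_P$ is already continuous up to $t=0$ (where it vanishes), while $P_t\phi(x)=T_tE\phi(x)\to\phi(x)$ as $t\downarrow0$ for every interior $x\in H_+$, since $E\phi$ is continuous at such $x$ and the Gaussian measures $\Nn(\de^{tA}x,Q_t)$ converge weakly to $\delta_x$; the same convergence yields joint continuity on $[0,T]\times H_+$, whence $u\in\dC([0,T]\times H_+)$.
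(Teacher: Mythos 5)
Your proposal is correct and rests on the same two pillars as the paper's proof -- the contraction gradient estimate of Proposition~\ref{prop:regolarita} and the regularity/measurability package of Proposition~\ref{prop:continuita_misurabilita} -- but it differs from the paper in two structural choices. First, you run Banach's fixed point theorem directly in $\dB^{0,1}_{b,\delta}([0,T]\times\overline{H_+})$, whereas the paper works in the product space $\sB=\dB_0([0,T]\times\overline{H_+})\times\dB_{b,\delta}((0,T]\times H_+;H)$, treating the candidate gradient $v$ as an \emph{independent} unknown and only identifying $v=\dD u$ after the fixed point is found. The product-space route lets the paper sidestep two things your route must supply: completeness of $\dB^{0,1}_{b,\delta}$ (a standard but nontrivial fact: uniform convergence of $u_n$ together with convergence of $\dD u_n$ in the weighted sup norm forces the limit to be differentiable with the right gradient) and the identity $\dD\bigl(\int_0^t P_{t-s}[g(s,\cdot)]\,\dd s\bigr)=\int_0^t \dD P_{t-s}[g(s,\cdot)]\,\dd s$, which Proposition~\ref{prop:continuita_misurabilita}-(ii),(iv) does \emph{not} state (it controls $\psi^0_P$ and $\psi^1_P$ separately); to be fair, the paper also invokes this identification tersely, so your level of detail matches the source. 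Second, you globalize in time via a small-$T_0$ contraction plus either the $\Gamma^n$-power trick or concatenation, while the paper uses Bielecki-type exponential weights $\de^{-\beta t}$ and shows $C_1(\beta),C_2(\beta)\to0$ as $\beta\to+\infty$, obtaining a contraction on all of $[0,T]$ in one stroke. Your $\Gamma^n$ variant is sound (the iterated kernels produce Beta-function products decaying like $\Gamma(1-\delta)^n/\Gamma(n(1-\delta))$, the usual fractional-Gronwall mechanism), and your computation $t^\delta\int_0^t(t-s)^{-\delta}s^{-\delta}\,\dd s=t^{1-\delta}B(1-\delta,1-\delta)$ is exact. The concatenation alternative, however, is more delicate than you suggest: the weight in $\dB^{0,1}_{b,\delta}$ is anchored at $t=0$, so restarting at $T_0$ with the merely bounded datum $u(T_0,\cdot)$ yields a gradient singularity of order $(t-T_0)^{-\delta}$ at the restart time, which is \emph{not} admissible in $\dB^{0,1}_{b,\delta}([0,T]\times\overline{H_+})$; one must exploit the extra regularity of $u(T_0,\cdot)$ (bounded Fr\'echet gradient, vanishing on $\partial H_+$, so that its odd extension is Lipschitz and $\dD T_t$ propagates Lipschitz bounds without smoothing loss) to keep the gradient bounded across the junction. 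Finally, your continuity arguments are fine: reading $u\in\dC_b((0,T]\times\overline{H_+})$ and $\dD u\in\dC_{b,\delta}$ off Proposition~\ref{prop:continuita_misurabilita} is if anything cleaner than the paper's appeal to~\citep[Theorem~4.149-(ii)]{fabbri:soc}, and for $\phi\in\dC_b(H_+)$ your localization of the weak convergence $\Nn(\de^{tA}x,Q_t)\to\delta_x$ near an interior point (where $E\phi$ is continuous) correctly handles the possible discontinuity of the odd extension across $\partial H_+$, paralleling the paper's five-term splitting plus dominated convergence.
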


\begin{proof}
We use the contraction mapping principle on a suitable space to establish the claim.
We consider the space $\sB\coloneqq \dB_0([0,T]\times \overline{H_+})\times \dB_{b,\delta}((0,T]\times H_+;H)$ endowed with the product norm given by the sum of the norms of the factor spaces. To ease notations, we will denote the norm of $\dB_0([0,T]\times \overline{H_+})$ (which is the sup-norm) by $\|\cdot\|_{\dB_0}$.

Let us define the operator $\Upsilon=(\Upsilon_1,\Upsilon_2)$ as
\begin{align*}
\Upsilon_1[u,v](t,x) &\coloneqq P_t\phi (x)+\int_0^t P_{t-s}[F(s,\cdot,u(s,\cdot),v(s,\cdot))](x)\, \dd s, \quad (t,x) \in [0,T] \times \overline{H_+}; \\
\Upsilon_2[u,v](t,x) &\coloneqq \dD P_t\phi (x)+\int_0^t \dD P_{t-s}[F(s,\cdot,u(s,\cdot),v(s,\cdot))](x)\, \dd s, \quad (t,x) \in (0,T] \times H_+.
\end{align*}

To begin with, we need to ensure that the $\Upsilon$ is well defined as a map from $\sB$ into itself.
Let $(u,v)\in \sB$ and consider, first, the function $\Upsilon_1[u,v]$. This is the sum of two functions belonging to $\dB_0([0,T]\times \overline{H_+})$. Indeed, on the one hand, by Proposition~\ref{prop:continuita_misurabilita}-(\ref{prop:contmeas_phi0}) and Remark~\ref{rem:funznullealbordo}, $P_t\phi(x) \in \dB_0([0,T]\times \overline{H_+})$. On the other hand, we readily see that the function
\begin{equation}\label{eq:psi}
\psi(s,x) \coloneqq F(s,x,u(s,x),v(s,x)), \quad (s,x) \in (0,T] \times H_+,
\end{equation}
is Borel measurable on $(0,T] \times H_+$, that $(s,x)\mapsto s^{\delta}\psi(s,x)$ is bounded on $(0,T] \times H_+$, and that, for all $(s,x) \in (0,T] \times H_+$,
\begin{equation}\label{eq:estimateF}
|F(s,x,u(s,x),v(s,x))|_\R\leq L'(1+|u(s,x)|_\R+|v(s,x)|)\leq L'(1+\|u\|_{\dB_b}+s^{-\delta}\|v\|_{\dB_{b,\delta}}).
\end{equation}
This implies that $\psi \in \dB_{b,\delta}((0,T] \times H_+)$, and hence, by Proposition~\ref{prop:continuita_misurabilita}-(\ref{prop:contmeas_psi0}), the map \begin{equation*}
(t,x) \mapsto \int_0^t P_{t-s}[\psi(s, \cdot)](x) \, \dd s = \int_0^t P_{t-s}[F(s,\cdot,u(s,\cdot),v(s,\cdot))](x)\, \dd s
\end{equation*}
is measurable and belongs to $\dB_0([0,T]\times \overline{H_+})$.

Next, considering the function $\Upsilon_2[u,v]$ we see that the first term belongs to $\dB_{b,\delta}((0,T]\times H_+;H)$, thanks to Proposition~\ref{prop:regolarita}, and that the second term is measurable, as a consequence of Proposition~\ref{prop:continuita_misurabilita}-(\ref{prop:contmeas_psi1}) (use the same $\psi$ above to apply this result). We are left to prove that this latter term belongs to $\dB_{b,\delta}((0,T]\times H_+;H)$, a fact that is justified by the following estimate, holding for all $(t,x) \in (0,T] \times H_+$, where we use once more Proposition~\ref{prop:regolarita} and \eqref{eq:estimateF}:
\begin{align*}
&\mathop{\phantom{\leq}} t^\delta\left|\int_0^t \dD P_{t-s}[F(s,\cdot,u(s,\cdot),v(s,\cdot))](x) \, \dd s\right|_\R
\leq L'C t^\delta\int_0^t (t-s)^{-\delta}(1+\|u\|_{\dB_b}+s^{-\delta}\|v\|_{\dB_{b,\delta}}) \, \dd s\\
&\leq L'C (1+\|u\|_{\dB_b}) t^\delta \int_0^t (t-s)^{-\delta} \dd s+ L'C t^\delta \|v\|_{\dB_{b,\delta}} \int_0^t s^{-\delta}(t-s)^{-\delta} \dd s\\
&\leq \frac{L'C}{1-\delta} (1+\|u\|_{\dB_b})T+ L'C\dfrac{\Gamma(1-\delta)^2}{\Gamma(2-2\delta)}\|v\|_{\dB_{b,\delta}}T^{1-\delta} < +\infty,
\end{align*}
where $\Gamma$ is the gamma function.

We proceed, next, to show that $\Upsilon$ is a contraction on $\sB$. To this end, it is convenient to use an equivalent norm on $\sB$, given by the sum of the equivalent norms $\norm{\cdot}_{\beta,\dB_b}$ and $\norm{\cdot}_{\beta,\dB_{b,\delta}}$ on $\dB_0([0,T]\times \overline{H_+})$ and on $\dB_{b,\delta}((0,T]\times H_+;H)$, respectively, defined by:
\begin{align*}
\norm{f}_{\beta,\dB_0} &\coloneqq \sup_{(t,x)\in[0,T]\times \overline{H_+}} \de^{-\beta t}|f(t,x)|_{\R},
&
\norm{f}_{\beta,\dB_{b,\delta}} &\coloneqq \sup_{(t,x)\in (0,T]\times H_+} \de^{-\beta t} t^\delta|f(t,x)|,
\end{align*}
where $\beta > 0$ is a constant to be fixed later in the proof.

We want, now, to find a suitable $\beta>0$ such that the map $\Upsilon=(\Upsilon_1,\Upsilon_2)$ is a contraction on
$(\sB, \norm{\cdot}_{\beta,\dB_0} + \norm{\cdot}_{\beta,\dB_{b,\delta}})$.
We start with an estimate on $\Upsilon_1$. Taking any $(u_1,v_1), \, (u_2,v_2)\in \sB$ and using that $\{P_t\}_{t \geq 0}$ is a semigroup of contractions (cf. Proposition~\ref{prop_P_t}) and \mbox{Hypothesis~\ref{ipotesi_su_F}-(\ref{ass:Flip})}, we have that, for all $(t,x) \in [0,T] \times H_+$,
\begin{align*}
&\mathop{\phantom{\leq}} |\Upsilon_1[u_1,v_1](t,x)-\Upsilon_1[u_2,v_2](t,x)|_\R\\
&= \left|\int_0^t P_{t-s}[F(s,\cdot,u_1(s,\cdot),v_1(s,\cdot))
-F(s,\cdot,u_2(s,\cdot),v_2(s,\cdot))](x) \, \dd s\right|_\R\\
&\leq \int_0^t \|F(s,\cdot,u_1(s,\cdot),v_1(s,\cdot))-F(s,\cdot,u_2(s,\cdot),v_2(s,\cdot))\|_{\dB_b(\overline{H_+})} \, \dd s\\
&\leq L\int_0^t (\|u_1(s,\cdot)-u_2(s,\cdot)\|_{\dB_0}+\|v_1(s,\cdot)-v_2(s,\cdot)\|_{\dB_{b,\delta}}) \, \dd s.
\end{align*}
Now, multiplying and dividing by $\de^{\beta s}$ in the integrals appearing in the last line, we get
\begin{align*}
&\mathop{\phantom{\leq}} |\Upsilon_1[u_1,v_1](t,x)-\Upsilon_1[u_2,v_2](t,x)|_\R\\
&\leq L\int_0^t \left(\de^{\beta s}\|u_1-u_2\|_{\beta,\dB_0}+ s^{-\delta} \de^{\beta s}\|v_1-v_2\|_{\beta,\dB_{b,\delta}} \right) \dd s\\
&\leq L (\|u_1-u_2\|_{\beta,\dB_0}+\|v_1-v_2\|_{\beta,\dB_{b,\delta}})\left[\left(\int_0^t \de^{\beta s} \, \dd s\right) \vee \left(\int_0^t s^{-\delta} \de^{\beta s} \, \dd s\right)\right].
\end{align*}
%Note that the measurability of the integrands appearing in the third and fourth lines of the equation above can be established applying~\citep[Lemma~1.21]{fabbri:soc}.

%Noticing that, for each $s \in [0,t]$, $s^{-\delta} \de^{\beta s} \leq s^{-\delta} \de^{\beta t}$, we easily get that:
%\begin{align*}
%&\mathop{\phantom{\leq}} |\Upsilon_1[u_1,v_1](t,x)-\Upsilon_1[u_2,v_2](t,x)|_\R\\
%&\leq L (\|u_1-u_2\|_{\beta,\dB_b}+\|v_1-v_2\|_{\beta,\dB_{b,\delta}})\left[\left(\dfrac{\de^{\beta t}-1}{\beta}\right) \vee \left(\de^{\beta t} \dfrac{t^{1-\delta}}{1-\delta}\right)\right],
%\end{align*}
Clearly $|\Upsilon_1[u_1,v_1](t,x)-\Upsilon_1[u_2,v_2](t,x)|_\R = 0$ on $[0,T] \times \partial H_+$, therefore,
\begin{align*}
&\mathop{\phantom{=}}\|\Upsilon_1[u_1,v_1]-\Upsilon_1[u_2,v_2]\|_{\beta,\dB_0}
=\sup_{t\in[0,T]} \left\{\de^{-\beta t}\|\Upsilon_1[u_1,v_1](t,\cdot)-\Upsilon_1[u_2,v_2](t,\cdot)\|_{\dB_0}\right\}\\
&\leq L C_1(\beta) \left[\|u_1-u_2\|_{\beta,\dB_0}+\|v_1-v_2\|_{\beta,\dB_{b,\delta}}\right],
\end{align*}
where
\begin{align*}
C_1(\beta) &\coloneqq \sup_{t\in[0,T]}\left\{\de^{-\beta t}\left[\left(\int_0^t \de^{\beta s} \, \dd s\right) \vee \left(\int_0^t s^{-\delta} \de^{\beta s} \, \dd s\right)\right]\right\} \\
&=\sup_{t\in[0,T]}\left\{\left(\frac{1-\de^{-\beta t}}{\beta} \right)\vee \left(\int_0^t s^{-\delta} \de^{-\beta (t-s)} \, \dd s\right)\right\}
\leq\frac{1}{\beta} \vee \sup_{t\in[0,T]} \int_0^t s^{-\delta} \de^{-\beta (t-s)} \, \dd s.
\end{align*}
An immediate application of \citep[Proposition~4.21-(iv)]{fabbri:soc} entails that $C_1(\beta)\to 0$, as $\beta\to +\infty$.

We provide, next, an estimate on $\Upsilon_2$, Considering any $(u_1,v_1), \, (u_2,v_2)\in \sB$ and using Proposition~\ref{prop:regolarita} and Hypothesis~\ref{ipotesi_su_F}-(\ref{ass:Flip}), we have that, for all $(t,x) \in [0,T] \times H_+$,
\begin{align*}
&\mathop{\phantom{\leq}} |\Upsilon_2[u_1,v_1](t,x)-\Upsilon_2[u_2,v_2](t,x)|_\R\\
&= \left|\int_0^t \dD P_{t-s}[F(s,\cdot,u_1(s,\cdot),v_1(s,\cdot))-F(s,\cdot,u_2(s,\cdot),v_2(s,\cdot))](x) \, \dd s\right|_\R \\
&\leq 
C \int_0^t (t-s)^{-\delta}\|F(s,\cdot,u_1(s,\cdot),v_1(s,\cdot))-F(s,\cdot,u_2(s,\cdot),v_2(s,\cdot))\|_{\dB_b(\overline{H_+})} \, \dd s\\
&\leq LC \int_0^t (t-s)^{-\delta} (\|u_1(s,\cdot)-u_2(s,\cdot)\|_{\dB_0}+\|v_1(s,\cdot)-v_2(s,\cdot)\|_{\dB_{b,\delta}}) \, \dd s.
\end{align*}
Now, multiplying and dividing by $\de^{\beta s}$ in the integrals appearing in the last line, we get
\begin{align*}
&\mathop{\phantom{\leq}} |\Upsilon_2[u_1,v_1](t,x)-\Upsilon_2[u_2,v_2](t,x)|_\R\\
&\leq LC \int_0^t (t-s)^{-\delta}\left(\de^{\beta s}\|u_1-u_2\|_{\beta,\dB_0}+ s^{-\delta} \de^{\beta s}\|v_1-v_2\|_{\beta,\dB_{b,\delta}} \right) \dd s\\
&\leq LC (\|u_1-u_2\|_{\beta,\dB_0}+\|v_1-v_2\|_{\beta,\dB_{b,\delta}}) \left[\left(\int_0^t (t-s)^{-\delta} \de^{\beta s} \, \dd s\right) \vee \left(\int_0^t (t-s)^{-\delta}s^{-\delta} \de^{\beta s} \, \dd s\right)\right].
\end{align*}
Therefore,
\begin{align*}
&\mathop{\phantom{=}}\|\Upsilon_2[u_1,v_1]-\Upsilon_2[u_2,v_2]\|_{\beta,\dB_{b,\delta}}
=\sup_{t\in[0,T]} \left\{\de^{-\beta t }t^\delta\|\Upsilon_2[u_1,v_1](t,\cdot)-\Upsilon_2[u_2,v_2](t,\cdot)\|_{\dB_b}\right\}\\
&\leq  C_2(\beta) LC \left[\|u_1-u_2\|_{\beta,\dB_0}+\|v_1-v_2\|_{\beta,\dB_{b,\delta}}\right],
\end{align*}
%\begin{equation*}
%\mathop{\phantom{\leq}} \|\Upsilon_2[u_1,v_1]-\Upsilon_2[u_2,v_2]\|_{\beta,\dB_{b,\delta}}
%\leq  C_2(\beta) LC \left[\|u_1-u_2\|_{\beta,\dB_b}+\|v_1-v_2\|_{\beta,\dB_{b,\delta}}\right],
%\end{equation*}
where
\begin{equation*}
C_2(\beta) \coloneqq \sup_{t\in(0,T]}\left\{\left(t^\delta\int_0^t (t-s)^{-\delta} \de^{-\beta (t-s)} \, \dd s\right)\vee \left(t^\delta\int_0^t (t-s)^{-\delta} s^{-\delta} \de^{-\beta (t-s)} \, \dd s\right)\right\}.
\end{equation*}
Applying \citep[Proposition~4.21-(iv) and (v)]{fabbri:soc}, we get that $C_2(\beta)\to 0$, as $\beta\to +\infty$.

By the reasoning above, there exists $\beta_0>0$ such that for $\beta>\beta_0$ we have
\begin{align*}
\|\Upsilon_1[u_1,v_1]-\Upsilon_1[u_2,v_2]\|_{\beta,\dB_0}+\|\Upsilon_2[u_1,v_1]-\Upsilon_2[u_2,v_2]\|_{\beta,\dB_{b,\delta}}\\
\leq \frac{1}{2}\left[\|u_1-u_2\|_{\beta,\dB_0}+\|v_1-v_2\|_{\beta,\dB_{b,\delta}}\right],
\end{align*}
which entails that $\Upsilon$ is a contraction and, therefore, that it has a unique fixed point.

The first component of $\Upsilon$ provides the unique mild solution of \eqref{HJB_2}. Indeed, by Proposition~\ref{prop:regolarita} and Proposition~\ref{prop:continuita_misurabilita}, we get that, for every
$(u,v)\in \sB$, the function $\Upsilon_1[u,v]$ is Fréchet differentiable and $\langle \Upsilon_2[u,v](t,x),h\rangle=\langle \dD \Upsilon_1[u,v](t,x),h\rangle$, for any $(t,x) \in (0,T] \times H_+$ and any $h \in H$.
Therefore, denoting by $(\bar u,\bar v) \in \sB$ the fixed point of $\Upsilon$, which satisfies $\Upsilon[\bar u,\bar v]=(\bar u,\bar v)$, we immediately obtain the following facts: $\bar v(t,x) = \dD\bar u(t,x)$, for all $(t,x) \in (0,T] \times H_+$; $\bar u$ belongs to $\dB_0([0,T]\times \overline{H_+})$; $\bar u$ is Fréchet differentiable, with $\dD\bar u\in \dB_{b,\delta}((0,T]\times H_+;H)$, which implies that $\bar u \in \dB^{0,1}_{b,\delta}([0,T]\times \overline{H_+})$; $\bar u$ verifies \eqref{mild_form}. Hence, by Definition~\ref{definizione:mild_solution}, $\bar u$ is a mild solution to \eqref{HJB_2}.

Furthermore, uniqueness easily follows noticing that any other solution $u^*\in \dB^{0,1}_{b,\delta}([0,T]\times \overline{H_+})$ must be equal to the first component of the fixed point of $\Upsilon$ in $\sB$, i.e., it must hold that $u^*=\bar u$.

The continuity of the solution and of its derivative follows exactly with the same argument of~\cite[Theorem 4.149-(ii)]{fabbri:soc}, exploiting the definition of semigroup $\{P_t\}_{t \geq 0}$ and that semigroup $\{T_t\}_{t \geq 0}$ has a regularizing effect for $t>0$.

To deduce the last assertion we proceed as follows. Let $(t,x), (t_0, x_0) \in [0,T] \times H_+$ and assume, without loss of generality, that $t_0 < t$. Then, recalling~\eqref{eq:psi}, we have that
\begin{multline*}
\abs{u(t,x) - u(t_0,x_0)} \leq \abs{P_t \phi(x) - P_{t_0} \phi(x)} + \abs{P_{t_0} \phi(x) - P_{t_0} \phi(x_0)} + \int_{t_0}^t \abs{P_{t-s}\psi(s,x)} \, \dd s 
\\
+ \int_0^{t_0} \abs{P_{t-s} \psi(s,x) - P_{t_0-s} \psi(s,x)} \, \dd s + \int_0^{t_0} \abs{P_{t_0-s} \psi(s,x) - P_{t_0-s} \psi(s,x_0)} \, \dd s.
\end{multline*}
The result follows from the continuity of $\phi$ and of semigroup $\{P_t\}_{t \geq 0}$ with respect to $t$, from Proposition~\ref{prop_P_t}, and from an application of the dominated convergence theorem.
\end{proof}

\section{Strong solutions of HJB equations}\label{sec:strongsol}
In applications to optimal control it is useful to know that mild solutions to an HJB equation can be approximated by regular solutions, where by regular we mean smooth enough to apply the Itô or the Dynkin formulas. Solutions constructed by this approximating procedure are called \emph{strong solution} (see Definition~\ref{def:strongsol} below for a precise statement).

The idea is to approximate mild solutions to~\eqref{HJB_2} with classical solutions in $\dUC^2_b(H)$. However, it is well-known that $\dUC^2_b(H)$ is not dense in $\dUC_b(H)$, when $\mathrm{dim}(H) = +\infty$ (since unit balls are not compact in this case). As a consequence, we cannot hope for uniform convergence and we need to resort to a different concept of convergence.

We follow the approach of~\citep[Section~4.5]{fabbri:soc} and introduce $\cK$-convergence (cf.~\citep[Definition~B.56 and Definition~4.131]{fabbri:soc}).

\begin{definition}\label{def:Kconv}
Let $\cX$ denote $H$, $\overline{H_+}$, or $H_+$.
A sequence $(f_n)_{n \in \N} \subset \dB_b(\cX)$ is said to be $\cK$-convergent to $f \in \dB_b(\cX)$ and we will write $f = \cK-\lim_{n \to \infty} f_n$, if:
\begin{enumerate}[(i)]
\item $\sup_{n \in \N} \norm{f_n}_{\dB_b(\cX)} < \infty$;
\item $\lim_{n \to \infty} \sup_{x \in K} \abs{f_n(x) - f(x)}=0$, for any compact set $K \subset \cX$.
\end{enumerate}

Similarly, a sequence $(f_n)_{n \in \N} \subset \dB_b([0,T]\times \cX)$ is said to be $\cK$-convergent to $f \in \dB_b([0,T] \times \cX)$ and we will write $f = \cK-\lim_{n \to \infty} f_n$, if:
\begin{enumerate}[(i)]
\item $\sup_{n \in \N} \norm{f_n}_{\dB_b([0,T]\times \cX)} < \infty$;
\item $\lim_{n \to \infty} \sup_{(t,x) \in [0,T] \times K} \abs{f_n(t,x) - f(t,x)}=0$, for any compact set $K \subset \cX$.
\end{enumerate}

Finally, for $\delta \in (0,1)$, we say that a sequence $\{f_n\}_{n\in\N} \subset \dB_{b,\delta}((0,T] \times \cX)$ $\cK$-converges to $f \in \dB_{b,\delta}((0,T] \times \cX)$, and we write $f = \klim_{n \to \infty} f_n$ in $\dB_{b,\delta}((0,T] \times \cX)$, if:
\begin{enumerate}[(i)]
\item $\sup_{n \in \N} \norm{f_n}_{\dB_{b,\delta}((0,T] \times \cX)} < +\infty$;
\item $\lim_{n \to \infty} \sup_{(t,x) \in I_0 \times K} t^\delta \abs{f_n(t,x) - f(t,x)} = 0$, for all compact sets $I_0 \subset (0,T]$ and $K \subset \cX$.
\end{enumerate}
\end{definition}

To give the definition of classical solution, we need to introduce the space
\begin{equation*}
\dUC_b^{2,A}(H_+) \coloneqq \{f \in \dUC_b^2(H_+) \colon
A^*\dD f \in \dUC_b(H_+;H), \, \dD^2 f \in \dUC_b(H_+; \cL_1(H))\}.
\end{equation*}

In this space we introduce the norm
\begin{equation*}
\norm{f}_{\dUC_b^{2,A}(H_+)}
\coloneqq \norm{f}_{\dUC_b(H_+)} + \norm{\dD f}_{\dUC_b(H_+;H)} + \norm{A^*\dD f}_{\dUC_b(H_+;H)}
+ \sup_{x \in H_+}\norm{\dD^2 f(x)}_{\cL_1(H)}.
\end{equation*}

\begin{definition}\label{def:classicsol}
Let $g$ be a given Borel measurable function. A function $u \colon [0,T] \times \overline{H_+} \to \R$ is a classical solution to
\begin{equation}\label{eq:HJBmod}
\left\{
  \begin{aligned}
  &v_t(t,x)=\frac{1}{2}\Tr[Q\dD^2v(t,x)]+\langle x, A^*\dD v(t,x)\rangle \\
	&\qquad \qquad \quad + F(t,x,v(t,x),\dD v(t,x)) + g(t,x), & &x\in H_+, \, t\in(0,T],\\
  &v(0,x)=\phi(x), & &x\in H_+, \\
  &v(t,x)=0, & &x\in \partial H_+, \, t\in[0,T],
  \end{aligned}
\right.
\end{equation}
if:
\begin{enumerate}[(i)]
\item\label{def:classicsol:regol_tempo} $u(\cdot,x) \in \dC^1([0,T])$, for all $x \in H_+$;
\item\label{def:classicsol:regol_spazio} $u(t,\cdot) \in \dUC_b^{2,A}(H_+)$, for any $t \in [0,T]$, and $\sup_{t \in [0,T]} \norm{u(t, \cdot)}_{\dUC_b^{2,A}(H_+)} < +\infty$;
\item\label{def:classicsol:regol_globale} $u \in \dC_0([0,T] \times \overline{H_+})$;
\item\label{def:classicsol:regol_derivate} $\dD u$, $A^*\dD u \in \dC_b([0,T] \times H_+; H)$ and $\dD^2 u \in \dC_b([0,T] \times H_+; \cL_1(H))$;
\item\label{def:classicsol:usoluzione} $u$ satisfies~\eqref{eq:HJBmod} for all $(t,x) \in [0,T] \times \overline{H_+}$.
\end{enumerate}
\end{definition}

We are now ready to introduce the concept of strong solution mentioned at the beginning of this section. Recall that the aim is to show that mild solutions to~\eqref{HJB_2} can be approximated, in the sense of $\cK$-convergence, by classical solutions. Solutions to~\eqref{HJB_2} constructed by this approximating procedure are called $\cK$-strong solutions.
\begin{definition}\label{def:strongsol}
We say that a function $u \colon [0,T] \times \overline{H_+} \to \R$ is a $\cK$-strong solution to~\eqref{HJB_2} if:
\begin{enumerate}[(i)]
\item
There exists $\eta \in (0,1)$ such that $u \in \dB^{0,1}_{b,\eta}([0,T] \times \overline{H_+})$;
\item There exist three sequences $\{u_n\}_{n \in \N} \subset \dC_0([0,T] \times \overline{H_+})$, $\{\phi_n\}_{n \in \N} \subset \dUC_b^{2,A}(H_+) \cap \dC_0(\overline{H_+})$, and $\{g_n\}_{n \in \N} \subset \dB_{b,\delta}((0,T] \times H_+)$ such that:
\begin{enumerate}[(a)]
\item For every $n \in \N$, $u_n$ is a classical solution (in the sense of Definition~\ref{def:classicsol}) to
\begin{equation}\label{eq:HJBmodapprox}
\left\{
  \begin{aligned}
  &v_t(t,x)=\frac{1}{2}\Tr[Q\dD^2v(t,x)]+\langle x, A^*\dD v(t,x)\rangle \\
	&\qquad \qquad \quad + F(t,x,v(t,x),\dD v(t,x)) + g_n(t,x), & &x\in H_+, \, t\in(0,T],\\
  &v(0,x)=\phi_n(x), & &x\in H_+, \\
  &v(t,x)=0, & &x\in \partial H_+, \, t\in[0,T];
  \end{aligned}
\right.
\end{equation}
\item It holds
\begin{equation*}
\begin{dcases}
\klim_{n \to \infty} \phi_n = \phi, &\text{in } \dB_b(H_+), \\
\klim_{n \to \infty} g_n = 0, &\text{in } \dB_{b,\delta}((0,T] \times H_+), \\
\klim_{n \to \infty} u_n = u, &\text{in } \dB_b([0,T] \times H_+), \\
\klim_{n \to \infty} \dD u_n = \dD u, &\text{in } \dB_{b,\delta}((0,T] \times H_+;H). \\
\end{dcases}
\end{equation*}
\end{enumerate}
\end{enumerate}
\end{definition}

\begin{theorem}\label{thm:esist_strong_sol}
Let $\delta\in(0,1)$ be such that Hypotheses~\ref{ipotesi}, \ref{ipotesi_su_Lambda} and \ref{ipotesi_su_F} are satisfied. Let $u$ be the mild solution to~\eqref{HJB_2} and define
\begin{equation}\label{eq:ipF}
f(t,x) \coloneqq F(t,x,u(t,x),\dD u(t,x)), \quad (t,x) \in (0,T]\times H_+.
\end{equation}
Suppose that $\phi \in \dC_b(H_+)$ and that $f$ is continuous on $(0,T] \times H_+$.

Then, the function $u$ is a $\cK$-strong solution to~\eqref{HJB_2}, which is unique among all solutions in $\dB_{b,\delta}^{0,1}([0,T]\times \overline{H_+})$.
\end{theorem}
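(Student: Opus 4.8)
The plan is to produce the three approximating sequences required by Definition~\ref{def:strongsol} by reducing everything to the whole-space theory of~\citep[Section~4.5]{fabbri:soc} through the identity $P_t=RT_tE$. Since $u$ is \emph{the} mild solution of~\eqref{HJB_2}, it satisfies $u(t,x)=P_t\phi(x)+\int_0^t P_{t-s}[f(s,\cdot)](x)\,\dd s$ with $f$ as in~\eqref{eq:ipF}. The idea is to regard $f$ as a \emph{given} forcing term: I approximate the data $(\phi,f)$ by smooth functions $(\phi_n,f_n)$, solve the associated \emph{linear} problems to obtain $u_n$, and then recast each $u_n$ as a classical solution of the \emph{nonlinear} equation~\eqref{eq:HJBmodapprox} by the simple device of setting $g_n\coloneqq f_n-F(\cdot,\cdot,u_n,\dD u_n)$, so that the linear equation solved by $u_n$ becomes exactly~\eqref{eq:HJBmodapprox}.

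First I would construct the approximations and verify that the resulting $u_n$ are classical. Using the $\cK$-density results of~\citep[Section~4.5]{fabbri:soc}, I choose $\phi_n\in\dUC_b^{2,A}(H_+)\cap\dC_0(\overline{H_+})$ with $\phi=\klim_{n\to\infty}\phi_n$ in $\dB_b(H_+)$ and smooth $f_n$ with $f=\klim_{n\to\infty}f_n$ in $\dB_{b,\delta}((0,T]\times H_+)$. Setting
\[
\tilde u_n(t,x)\coloneqq T_tE\phi_n(x)+\int_0^t T_{t-s}[Ef_n(s,\cdot)](x)\,\dd s,
\]
one has $u_n=R\tilde u_n=P_t\phi_n+\int_0^t P_{t-s}[f_n(s,\cdot)]\,\dd s$. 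Because $E$ produces functions that are \emph{odd} in the variable $x_1$, and because the kernel $G$ of Lemma~\ref{lem:explicitsemigroupT} is odd in its middle argument (the property already used to obtain~\eqref{eq:extrestrsgr}), the function $\tilde u_n$ is odd in $x_1$; hence $u_n$ vanishes on $\partial H_+$, giving the boundary condition for free. The interior spatial regularity $u_n(t,\cdot)\in\dUC_b^{2,A}(H_+)$, the membership $\dD^2 u_n\in\dC_b(\cdot;\cL_1(H))$, and the validity of the PDE on $H_+$ for $t>0$ all follow from the corresponding whole-space statements in~\citep[Section~4.5]{fabbri:soc} applied to $\tilde u_n$, since on the open set $H_+$ one has $E\phi_n=\phi_n$ and $Ef_n=f_n$; the $\dC^1$-in-time regularity of Definition~\ref{def:classicsol}-(\ref{def:classicsol:regol_tempo}) and the continuity up to $t=0$ come from the smoothness of $\phi_n$. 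With $g_n$ as defined, substitution shows $u_n$ solves~\eqref{eq:HJBmodapprox}, and $\norm{g_n}_{\dB_{b,\delta}}<\infty$ follows from the linear-growth bound Hypothesis~\ref{ipotesi_su_F}-(\ref{ass:Flin}) together with $u_n\in\dB_{b,\delta}^{0,1}$.

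Next I would establish the four $\cK$-convergences and uniqueness. The convergences of $\phi_n$ and $f_n$ hold by construction; for $u_n\to u$ and $\dD u_n\to\dD u$ I would invoke the continuity of $\eta\mapsto P_\cdot\eta$ and $\psi\mapsto\int_0^\cdot P_{\cdot-s}\psi(s,\cdot)\,\dd s$ (and of their gradients, with the $t^\delta$ weight) with respect to $\cK$-convergence, which transfer from the corresponding whole-space results through $P_t=RT_tE$ and the $\cK$-continuity of $E$ and $R$. Then Hypothesis~\ref{ipotesi_su_F}-(\ref{ass:Flip}) gives $F(\cdot,\cdot,u_n,\dD u_n)\to F(\cdot,\cdot,u,\dD u)=f$ in $\dB_{b,\delta}$, whence $g_n=f_n-F(\cdot,\cdot,u_n,\dD u_n)\to f-f=0$. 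For uniqueness, any $\cK$-strong solution in $\dB_{b,\delta}^{0,1}$ is in particular a mild solution (pass to the $\cK$-limit in the mild form of its defining classical approximations, using $\klim_{n\to\infty} g_n=0$), and hence it coincides with $u$ by Theorem~\ref{thm:esis_unic_sol_mild}.

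The main obstacle will be the verification that the approximate solutions are \emph{genuinely classical up to $t=0$ and up to $\partial H_+$}. The whole-space regularity theory requires the odd extensions $E\phi_n$, $Ef_n$ to lie in $\dUC_b^2(H)$, but an odd extension is $\dC^2$ across $\{x_1=0\}$ only when the second normal derivative vanishes there (e.g.\ $\partial_{x_1}^2\phi_n=0$ on $\partial H_+$); otherwise one cannot quote the whole-space theorem on all of $H$. The resolution, which is the delicate point, is that one may \emph{impose these boundary-compatibility conditions on the approximations without destroying $\cK$-convergence}, precisely because $\cK$-convergence probes only compact subsets of the \emph{open} half-space and is blind to boundary behaviour; it is here that the structural assumptions Hypothesis~\ref{ipotesi}-(\ref{ass:alpha}) and (\ref{ass:lambda}) (which render $\partial H_+$ invariant and split the Gaussian as a product, so that $E$ and $R$ genuinely intertwine the two semigroups) carry the argument.
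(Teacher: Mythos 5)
Your proposal is correct and follows essentially the same route as the paper: approximate the data $(\phi,f)$ by smooth cylindrical functions cut off near $\partial H_+$ (and near $t=0$) --- which is precisely how the paper implements your ``impose boundary compatibility without destroying $\cK$-convergence'' idea, via the cutoffs $\chi_{1/h}$ and symmetric finite-dimensional mollifications of the odd extensions --- then define $u_n$ through the linear mild formula, recast it as a classical solution of~\eqref{eq:HJBmodapprox} with $g_n = f_n - F(\cdot,\cdot,u_n,\dD u_n)$, and conclude by the four $\cK$-limits and uniqueness of the mild solution from Theorem~\ref{thm:esis_unic_sol_mild}. The only discrepancy is the sign of $g_n$ (the paper writes $g_n = F(t,x,u_n,\dD u_n) - f_n$, apparently a typo, since your sign is the one under which $u_n$ actually solves~\eqref{eq:HJBmodapprox}).
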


\begin{proof}
We note, first, that $u$ is continuous both in $[0,T]\times H_+$ and in $(0,T]\times \overline{H}_+$, thanks to Theorem \ref{thm:esis_unic_sol_mild},
and that $f$ is bounded, thanks to
Hypothesis \ref{ipotesi_su_F}-(\ref{ass:Flin}).

To prove the theorem, we need to provide the sequence $\{u_n\}_{n \in \N}$ of classical solutions to~\eqref{eq:HJBmodapprox} that approximate the mild solution $u$ to~\eqref{HJB_2}. To do so, we construct approximating sequences $\{f_n\}_{n \in \N}$ and $\{\phi_n\}_{n \in \N}$ for $f$ and $\phi$, respectively.

Let us fix an orthonormal basis $\cE \subset \cD(A^*)$ of $H$ (this can always be done, as $\cD(A^*)$ is dense). Since $\bar y \in \cD(A^*)$, we can choose $\cE$ so that $\bar y \in \cE$. We denote by $w_j$, $j \geq 2$, the other elements of $\cE$.
Recall that we can identify any element $x \in H$ with the sequence of Fourier coefficients $(x_k)_{k \in \N}$ with respect to the orthonormal basis $\cE$. Let us denote, for each $n \in \N$, the orthogonal projection $P_n$ onto $\mathrm{Span}\{\bar y, w_2, \dots, w_n\}$ (clearly, $P_1$ denotes the projection onto $\mathrm{Span}\{\bar y\}$) and let us define the maps $\Pi_n \colon H \to \R^n$ and $Q_n \colon \R^n \to H$ as
\begin{equation*}
\Pi_n x \coloneqq (x_1, \dots, x_n), \qquad Q_n(x_1, \dots, x_n) \coloneqq x_1 \bar y + \sum_{j=2}^n x_j w_j.
\end{equation*}
Note that $P_n = Q_n \circ \Pi_n$. Let us also consider, for each $n \in \N$, a family of $\dC^\infty$ symmetric mollifiers $\eta_k^n \colon \R^n \to \R$, $k \in \N$, with support in the ball centered at the origin with radius $\frac 1k$.

We approximate $\phi$, first. Since the behaviour of $\phi$ on $\partial H_+$ is not known we do not directly define regularising convolutions for $\phi$, but rather for some approximations of this function. First, we extend $\phi$ to be equal to $0$ for all $x\in \partial H_+$. Then we define, for each $h \in \N$, the functions
\begin{equation*}
\widetilde \phi_h(x) = \widetilde \phi_h(x_1,x') \coloneqq \phi(x_1,x') \chi_{\frac 1h}(x_1), \quad x \in \overline{H_+},
\end{equation*}
where, for any $\epsilon > 0$, $\chi_\epsilon \in \dC^\infty(\R)$ is a function taking values in $[0,1]$, such that $\chi_\epsilon(z) = 0$, if $z \leq \epsilon$, and $\chi_\epsilon(z) = 1$, if $z \geq 2\epsilon$.
We define, for all $x \in H$ and for each $h \in \N$, the following regularizing convolutions of the functions $E \widetilde \phi_h$, where $E$ is the extension operator introduced in~\eqref{def:estensione},
\begin{equation*}
\psi_{k,h}^n(x) \coloneqq \int_{\R^n} E \widetilde \phi_h(Q_n z) \eta^n_k(\Pi_n x - z) \, \dd z = \int_{\R^n} E \widetilde \phi_h(P_n x - Q_n z) \eta^n_k(z) \, \dd z.
\end{equation*}
We immediately obtain that, for all $k, h, n \in \N$,
\begin{equation}\label{eq:psikn_estimate}
\norm{\psi_{k,h}^n}_{\dB_b(H)} \leq \norm{E \widetilde \phi_h}_{\dB_b(H)} \le \norm{\phi}_{\dB_b(H_+)} < +\infty.
\end{equation}
Moreover, we can easily show that $\psi_{k,h}^n$ vanishes on $\partial H_+$. Indeed, for all $x \in \partial H_+$ and recalling that we chose the mollifiers to be symmetric, we have that
\begin{align}
&\mathop{\phantom{=}}\psi^n_{k,h}(x) = \int_{\R^n} E \widetilde \phi_h(P_n x - Q_n z) \eta^n_k(z) \, \dd z \notag
\\
&= \int_{\R^{n-1}} \int_\R E \widetilde \phi_h\biggl(\sum_{j=2}^n (x_j - z_j) w_j - z_1 \bar y\biggr) \eta^n_k(z_1, \dots, z_n) \, \dd z_1 \cdots \dd z_n \notag
\\
&= \int_{\R^{n-1}} \int_{-\infty}^0 \widetilde \phi_h\left(-z_1, x_2 - z_2, \dots, x_n - z_n\right) \eta^n_k(z_1, \dots, z_n) \, \dd z_1 \cdots \dd z_n
\\
&\qquad - \int_{\R^{n-1}} \int_0^{+\infty} \widetilde \phi_h\left(z_1, x_2 - z_2, \dots, x_n - z_n\right) \eta^n_k(z_1, \dots, z_n) \, \dd z_1 \cdots \dd z_n \notag
\\
&= \int_{\R^{n-1}} \int_0^{+\infty} \widetilde \phi_h\left(z_1, x_2 - z_2, \dots, x_n - z_n\right) \eta^n_k(z_1, \dots, z_n) \, \dd z_1 \cdots \dd z_n
\\
&\qquad - \int_{\R^{n-1}} \int_0^{+\infty} \widetilde \phi_h\left(z_1, x_2 - z_2, \dots, x_n - z_n\right) \eta^n_k(z_1, \dots, z_n) \, \dd z_1 \cdots \dd z_n = 0. \label{eq:psikn_boundary}
\end{align}
Let us define, for all $h, n \in \N$, the functions $\xi_{h,n}(x) \coloneqq E \widetilde \phi_h(P_n x)$. Clearly, $\psi_{k,h}^n$ and $\xi_{h,n}$ can be also seen as functions depending on $n$ real variables. Therefore, by standard facts on convolutions (see, e.g.,~\citep{evansgariepy:measth}), we have that, for all $k, h, n \in \N$, $\psi_{k,h}^n \in \dC^\infty(\R^n)$ and that, for any $h,n \in \N$, the sequence $\{\psi_{k,h}^n\}_{k \in \N}$ converges to $\xi_{h,n}$ uniformly on compact subsets of $\R^n$.
Next, for each $h,n \in \N$, we take $k(h,n) \in \N$ such that
\begin{equation*}
\suptwo{x \in H}{\abs{x} \leq n} \abs{\psi^n_{k(h,n),h}(x) - \xi_{h,n}(x)} \leq \dfrac 1n
\end{equation*}
and we set $\psi_{h,n} \coloneqq \psi^n_{k(h,n),h}$. We have that, for any compact set $K \subset H_+$, any $h \in \N$, and any $n \geq \sup_{x \in K} \abs{x}$,
\begin{align*}
\sup_{x \in K} \abs{\psi_{h,n}(x) - E \widetilde \phi_h(x)} 
&\leq \sup_{x \in K} \abs{\psi_{h,n}(x) - \xi_{h,n}(x)} + \sup_{x \in K} \abs{\xi_{h,n}(x) - E \widetilde \phi_h(x)} \\
&\leq \dfrac 1n + \sup_{x \in K} \abs{\xi_{h,n}(x) - \widetilde \phi_h(x)},
\end{align*}
where we used the fact that $E \widetilde \phi_h(x) = \widetilde \phi_h(x)$, for all $x \in H_+$. From this estimate, observing that the set $\{P_n x \colon x \in K, \, n \in \N\} \subset H$ is relatively compact (cf.~\citep[Lemma~B.77]{fabbri:soc}) and using the continuity of $\widetilde \phi_h$ and~\eqref{eq:psikn_estimate}, we get that, for each $h \in \N$,
\begin{equation*}
\klim_{n \to \infty} \psi_{h,n} = \widetilde \phi_h, \qquad \text{in } \dB_b(H_+).
\end{equation*}

Finally, taking the diagonal sequence
\begin{equation*}
\phi_n(x) \coloneqq \psi_{n,n}(x), \qquad x \in \overline{H_+}, \, n \in \N,
\end{equation*}
we easily have that
\begin{equation*}
\klim_{n \to \infty} \phi_n = \phi, \qquad \text{in } \dB_b(H_+),
\end{equation*}
and, applying~\eqref{eq:psikn_boundary}, we get that $\phi_n \in \dC_0(\overline{H_+})$, for all $n \in \N$.

We now turn our attention to the approximation of $f$, which may not belong to $\dC_0((0,T] \times \overline{H_+})$, due to the singularity at $t=0$ and the fact that the behaviour of $f$ on $\partial H_+$ is not known. For this reason, also in this case we define, first, regularising convolutions for some approximations of $f$. First, we extend $f$ to be equal to $0$ for all $(t,x) \in (0,T] \times \partial H_+$. We start approximating $f$ in space, by defining, for each $h \in \N$, the functions
\begin{equation*}
f_h(t,x) = f_h(t,x_1,x') \coloneqq f(t,x_1,x') \chi_{\frac 1h}(x_1), \quad (t,x) \in (0,T] \times \overline{H_+}.
\end{equation*}
Next, we approximate in time by defining, first, the following extensions, for each $h \in \N$ and $x \in \overline{H_+}$,
\begin{equation*}
\overline f_h(t,x) \coloneqq
\begin{dcases}
f_h(t,x), &\text{if } 0 < t \leq T, \\
f_h(T,x), &\text{if } t > T,
\end{dcases}
\end{equation*}
and then by introducing, for each $h \in \N$, the approximations
\begin{equation*}
\widetilde f_h(t,x) \coloneqq \chi_{\frac 1h}(t) \overline f_h(t,x), \quad (t,x) \in \R \times \overline{H_+}.
\end{equation*}
Note that, for each $h \in \N$, $f_h$ is continuous on $(0,T] \times \overline{H_+}$ and vanishes on $\partial H_+$, and hence $\widetilde f_h \in \dC_0(\R \times \overline{H_+})$. This entails that $E\widetilde f_h \in \dC_b(\R \times H)$ (where $E\widetilde f_h$ is defined as in~\eqref{eq:extension_timespace}), which allows us to define, for each $h \in \N$, a sequence $\{f_{h,n}\}_{n \in \N}$ such that
\begin{equation*}
\klim_{n \to \infty} f_{h,n} = E\widetilde f_h, \qquad \text{in } \dB_b(\R \times H).
\end{equation*}
Such a sequence can be constructed proceeding in a similar way as in the first part of this proof, by choosing for each $n$ a suitable family of $\dC^\infty$ mollifiers (see also the proof of~\citep[Lemma~B.78]{fabbri:soc}). In particular, this sequence can be chosen so that
\begin{equation}\label{eq:fhn_estimate}
\suptwo{(t,x) \in \R \times H}{\abs{t} + \abs{x} \leq n} \abs{f_{h,n}(t, x) - E\widetilde f_h(t, P_n x)} \leq \dfrac 1n.
\end{equation}
We consider, next, the diagonal sequence $\{f_{n,n}\}_{n \in \N}$. Using again the fact $E\phi = \phi$ on $H_+$, we have that, for any compact subsets $I_0 \subset (0,T]$, $K \subset H_+$, and any $n \geq \sup_{(t,x) \in I_0 \times K} \{t + \abs{x}\}$,
\begin{align}
&\mathop{\phantom{\leq}} \sup_{(t,x) \in I_0 \times K} t^\delta \abs{f_{n,n}(t,x) - f(t,x)} \notag
\\
&\leq \sup_{(t,x) \in I_0 \times K} t^\delta \abs{f_{n,n}(t,x) - E\widetilde f_n(t, x)} + \sup_{(t,x) \in I_0 \times K} t^\delta \abs{E\widetilde f_n(t, x) - f(t,x)} \notag
\\
&= \sup_{(t,x) \in I_0 \times K} t^\delta \abs{f_{n,n}(t,x) - \widetilde f_n(t, x)} + \sup_{(t,x) \in I_0 \times K} t^\delta \abs{\widetilde f_n(t, x) - f(t,x)}. \label{eq:fn_estimate}
\end{align}
Observe that, by construction, the sequence $\{\widetilde f_{n}\}_{n \in \N}$ $\cK$-converges to $f$ on $\dB_{b,\delta}((0,T] \times H_+)$. Indeed, thanks to the definition of $f_h$, $\widetilde f_h$ converges to $f$ uniformly on compact subsets of $(0,T] \times H_+$, as $h \to +\infty$. Therefore, the last term in the second line of~\eqref{eq:fn_estimate} converges to $0$. To deal with the first term, instead, we note that, if $n$ is large enough, then $\widetilde f_n(t,x) = f(t,x)$, for all $(t,x) \in I_0 \times K$. Therefore, applying also~\eqref{eq:fhn_estimate}, we have that
\begin{align*}
&\mathop{\phantom{\leq}} \sup_{(t,x) \in I_0 \times K} t^\delta \abs{f_{n,n}(t,x) - \widetilde f_n(t, x)}
\\
&\leq \sup_{(t,x) \in I_0 \times K} t^\delta \abs{f_{n,n}(t,x) - \widetilde f_n(t, P_n x)} + \sup_{(t,x) \in I_0 \times K} t^\delta \abs{\widetilde f_n(t, P_n x) - \widetilde f_n(t,x)}
\\
&\leq \dfrac{\sup_{t \in I_0} t^\delta}{n} + \sup_{(t,x) \in I_0 \times K} t^\delta \abs{\widetilde f_n(t, P_n x) - f(t,x)}
\end{align*}
and, thus, noting that the set $\{(t,P_n x) \colon t \in I_0, \, x \in K, \, n \in \N\} \subset \R \times H$ is relatively compact (cf.~\citep[Lemma~B.77]{fabbri:soc}) and using the continuity of $f$, we get that
\begin{equation*}
\klim_{n \to \infty} f_{n,n} = f, \qquad \text{in } \dB_{b,\delta}((0,T] \times H_+).
\end{equation*}
From now on, let us simply denote the diagonal sequence $\{f_{n,n}\}_{n \in \N}$ by $\{f_{n}\}_{n \in \N}$.

Let us define, next, the approximating sequence $\{u_n\}_{n \in \N}$ as follows
\begin{equation*}
u_n(t,x) \coloneqq P_t\phi_n(x) + \int_0^t P_{t-s}[f_n(s,\cdot)](x) \, \dd s, \quad (t,x) \in [0,T] \times \overline{H_+}.
\end{equation*}
We note, first, that $u_n(0,x) = \phi_n(x)$, for all $x \in H_+$, and that, as a consequence of Proposition~\ref{prop:continuita_misurabilita}~\eqref{prop:contmeas_phi0}-\eqref{prop:contmeas_psi0} (see also Remark~\ref{rem:funznullealbordo}), $u_n \in \dC_0([0,T] \times \overline{H_+})$, and hence point~\eqref{def:classicsol:regol_globale} of Definition~\ref{def:classicsol} is verified.
Arguing as in the proof of~\citep[Theorem~4.135]{fabbri:soc}, we deduce that, for each $n \in \N$, $\phi_n \in \dUC_b^{2,A}({H_+})$ and that, for each $t \in (0,T]$, $f_n(t, \cdot) \in \dUC_b^{2,A}({H_+})$. Therefore, by~\citep[Proposition~B.91]{fabbri:soc}, we deduce that points~\eqref{def:classicsol:regol_spazio} and~\eqref{def:classicsol:regol_derivate} of Definition~\ref{def:classicsol} are satisfied.

Next, following the same reasoning of Step~2 of the proof of~\citep[Theorem~4.135]{fabbri:soc}, we get that also points~\eqref{def:classicsol:regol_tempo} and~\eqref{def:classicsol:usoluzione} of Definition~\ref{def:classicsol} are verified, by choosing $g_n(t,x) \coloneqq F(t,x, u_n(t,x), \dD u_n(t,x)) - f_n(t,x)$. Hence, $u_n$ is a classical solution to~\eqref{eq:HJBmodapprox}, for all $n \in \N$.

We are left to check the convergences of the three sequences $\{u_n\}_{n \in \N}$, $\{\phi_n\}_{n \in \N}$, $\{g_n\}_{n \in \N}$ and that $u$ is the unique $\cK$-strong solution to~\eqref{HJB_2}. This can be done exactly in the same way as in Step~3 of the proof of~\citep[Theorem~4.135]{fabbri:soc}.
\end{proof}

\section{Application to a control problem with exit time}\label{sec:optctrl}
In this section we discuss an exit-time optimal control problem. In particular, we study the associated HJB equation and we provide a verification theorem and a result concerning optimal feedback controls.

Fix $T>0$, a real separable Hilbert space $\Xi$, and a complete and separable metric space $U$, representing the \emph{control space}.

We consider a fixed complete filtered probability space $(\Omega, \cF, \bbF \coloneqq (\cF_s)_{s \in [0,T]}, \bbP)$, where filtration $\bbF$ satisfies the usual assumptions, supporting a cylindrical Wiener process $W = (W(s))_{s \in [0,T]}$ on $\Xi$.
We introduce also the following set of \emph{admissible controls}
\begin{equation*}
\cU_{ad} \coloneqq \{u \colon [0,T] \times \Omega \to U \text{ s.t. } \bfu = (u(s))_{s \in [0,T]} \text{ is $\bbF$-progressively measurable}\}.
\end{equation*}

For any fixed $t \in [0,T)$ and any $\bfu \in \cU_{ad}$ we consider the SDE
\begin{equation}\label{eq:controlledSDE}
\begin{dcases}
\dd X(s) = [AX(s) + b(s,X(s),u(s))] \, \dd s + \sqrt Q \, \dd W(s), & s \in [t,T], \\
X(t) = x \in H.
\end{dcases}
\end{equation}

The following assumption will stand from now on.
\begin{hypothesis}\label{assumpt:controlpb}\mbox{}
\begin{enumerate}[(i)]
\item\label{hp:controlpb:AQ} $A$ and $Q$ satisfy Hypotheses~\ref{ipotesi} and \ref{ipotesi_su_Lambda}, for some $\delta \in (0,1)$;
\item\label{hp:controlpb:b} $b\in \dB_b([0,T]\times H\times U;H)$, it is continuous in $(t,x)$ uniformly with respect to $u$, and satisfies
\begin{equation}\label{eq:bLip}
\sup_{s,u \in [0,T] \times U} \abs{b(s,x_1,u) - b(s,x_2,u)} \leq L_b\abs{x_1-x_2}, \quad \forall x_1, \, x_2 \in H.
\end{equation}
\end{enumerate}
\end{hypothesis}

Under these assumptions (see, e.g., \citep[Theorem~1.152]{fabbri:soc}), for any $t \in [0,T)$, $x \in H$, and $\bfu \in \cU_{ad}$, SDE~\eqref{eq:controlledSDE} admits a unique mild solution $X = (X(s;t,x,\bfu))_{s \in [t,T]}$ (in the sense of~\citep[Definition~1.119]{fabbri:soc}) in the class of processes
\begin{equation*}
\cH_2(t,T;H) \coloneqq \{Y \colon [t,T] \times \Omega \to H \text{ progr. meas., s.t. }\sup_{s \in [t,T]} \bbE\abs{Y(s)}^2 < +\infty\}.
\end{equation*}
Moreover, $X$ has continuous trajectories and verifies, for some constant $C > 0$,
\begin{equation*}
\bbE\left[\sup_{s \in [t,T]} \abs{X(s)}^2 \right] \leq C(1+\abs{x}^2).
\end{equation*}

In what follows, we will denote this solution by $(X(s))_{s \in [t,T]}$, when no confusion can arise.

The aim of the optimal control problem is to drive the dynamics of the state process $X$, by choosing a control $\bfu \in \cU_{ad}$ to minimize the \emph{cost functional} $J \colon [0,T] \times \overline{H_+} \times \cU_{ad} \to \R$ defined, for all $t \in [0,T]$, $x \in \overline{H_+}$, $\bfu \in \cU_{ad}$, as
\begin{equation}\label{eq:costfunct}
J(t,x,\bfu) \coloneqq \bbE\left[\int_t^{T \land \tau} \ell(s,X(s;t,x,\bfu),u(s)) \, \dd s + \ind_{T < \tau}\phi(X(T;t,x,\bfu))\right],
\end{equation}
where $\tau \coloneqq \inf\{s \geq t \colon X(s;t,x,\bfu)\in \overline{H_{-}}\}$.

The following assumption will be in force in the remainder of the paper.
\begin{hypothesis}\label{hyp:funzcost}
\mbox{}
\begin{enumerate}[(i)]
\item\label{hyp:l} The running cost function $\ell \colon [0,T] \times \overline{H_+} \times U \to \R$ is measurable and bounded on $[0,T] \times \overline{H_+} \times U$. Moreover, it is continuous in $(t,x) \in [0,T] \times \overline{H_+}$, uniformly with respect to $u \in U$.
\item\label{hyp:phi} $\phi \in \dC_b(H_+)$.
\end{enumerate}
\end{hypothesis}

The value function of the optimal control problem is
\begin{equation}\label{eq:valuefunct}
V(t,x) = \inf_{\bfu \in \cU_{ad}} J(t,x,\bfu), \qquad (t,x) \in [0,T] \times \overline{H_+}.
\end{equation}

\begin{remark}
Clearly, $\tau$ is an $\bbF$-stopping time and depends on $t$, $x$, and the control $\bfu$. However, we will omit this dependence to ease notations.
\end{remark}

We can associate to the exit-time optimal control problem introduced above an HJB equation, that the value function $V$ is expected to satisfy.

To start, let us introduce the \emph{current value Hamiltonian}
\begin{equation}\label{eq:HCV}
F_{CV}(t,x,p,u) \coloneqq \langle p,b(t,x,u)\rangle+\ell(t,x,u), \quad (t,x,p,u) \in [0,T] \times \overline{H_+} \times H \times U,
\end{equation}
and the \emph{Hamiltonian}
\begin{equation}\label{eq:Hamilt}
F(t,x,p) \coloneqq \inf_{u \in U} F_{CV}(t,x,p,u), \quad (t,x,p) \in [0,T] \times \overline{H_+} \times H.
\end{equation}

The HJB equation associated to it is
\begin{equation}\label{prob_per_verification}
\left\{
\begin{aligned}
&v_t(t,x) + \frac{1}{2}\Tr[Q \dD^2 v(t,x)]+\langle A^*\dD v(t,x),x\rangle \\
&\qquad \qquad \quad +F(t,x,\dD v(t,x)) = 0, & &x\in H_+, \, t\in[0,T), \\
&v(T,x)=\phi(x), & &x\in H_+, \\
&v(t,x)=0, & &x\in \partial H_+, \, t\in [0,T].
\end{aligned}
\right.
\end{equation}

\begin{remark}
Note that, while Equation~\eqref{HJB_2} corresponds to an initial value problem, here Equation~\eqref{prob_per_verification} is associated to a terminal value problem, which is a more natural formulation for an optimal control problem. It is clear that all the results in Sections~\ref{sec:mildsol} and \ref{sec:strongsol} can still be applied, simply by reversing time. To be more precise, we give the following definition.
\end{remark}

\begin{definition}\label{def:mildstrongoptctrl}
A function $v \colon [0,T]\times \overline{H_+} \to \R$ is a mild solution (resp. $\cK$-strong solution) to the HJB equation~\eqref{prob_per_verification} if the function $z(t,x) \coloneqq v(T-t,x)$, $(t,x) \in [0,T]\times \overline{H_+}$, is a mild solution (resp. $\cK$-strong solution) to~\eqref{HJB_2} in the sense of Definition~\ref{definizione:mild_solution} (resp. Definition~\ref{def:strongsol}).
\end{definition}

We want now to verify that the HJB equation~\eqref{prob_per_verification} has a unique mild and $\cK$-strong solution. To do so, we need to check that the Hamiltonian $F$ verifies appropriate assumptions.
\begin{proposition}\label{prop:F}
Under Hypothesis~\ref{hyp:funzcost}-(\ref{hyp:l}), the Hamiltonian $F$ is a measurable function and satisfies Hypothesis~\ref{ipotesi_su_F}-(\ref{ass:Flip})-(\ref{ass:Flin}). Moreover, $F$ is continuous on $[0,T] \times \overline{H_+} \times H$.
\end{proposition}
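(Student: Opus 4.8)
The plan is to derive everything from the elementary stability of the infimum: if $g_1, g_2 \colon U \to \R$ are bounded below with finite infima, then $\abs{\inf_{u} g_1(u) - \inf_u g_2(u)} \leq \sup_{u} \abs{g_1(u) - g_2(u)}$. Writing $M_b \coloneqq \norm{b}_{\dB_b([0,T] \times H \times U; H)}$ and $M_\ell \coloneqq \norm{\ell}_{\dB_b([0,T] \times \overline{H_+} \times U)}$, both finite by Hypotheses~\ref{assumpt:controlpb}-(\ref{hp:controlpb:b}) and~\ref{hyp:funzcost}-(\ref{hyp:l}), I first note that $F_{CV}(t,x,p,u) \geq -(M_b \abs{p} + M_\ell)$ for every $u$, so the infimum defining $F$ is a finite real number and $F$ is genuinely $\R$-valued; this legitimizes the use of the inequality above.

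For the Lipschitz bound of Hypothesis~\ref{ipotesi_su_F}-(\ref{ass:Flip}), recall that the Hamiltonian $F(t,x,p)$ does not depend on the $y$-variable, so the $y$-part of that estimate is trivially zero and only the dependence on $p$ matters. Applying the infimum inequality with $g_i(u) = F_{CV}(t,x,p_i,u)$ and using that $F_{CV}(t,x,p_1,u) - F_{CV}(t,x,p_2,u) = \langle p_1 - p_2, b(t,x,u)\rangle$, I obtain $\abs{F(t,x,p_1) - F(t,x,p_2)} \leq M_b \abs{p_1 - p_2}$, i.e. the Lipschitz property with $L = M_b$. For the linear growth of Hypothesis~\ref{ipotesi_su_F}-(\ref{ass:Flin}), I bound $\abs{F(t,x,p)} \leq \sup_u \abs{F_{CV}(t,x,p,u)} \leq M_b \abs{p} + M_\ell \leq L'(1 + \abs{p})$ with $L' = \max\{M_b, M_\ell\}$.

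For continuity on $[0,T] \times \overline{H_+} \times H$, I again use the infimum inequality to estimate $\abs{F(t_1,x_1,p_1) - F(t,x,p)}$ by $\sup_u \abs{F_{CV}(t_1,x_1,p_1,u) - F_{CV}(t,x,p,u)}$, and then split the integrand-difference using the identity
\[
\langle p_1, b(t_1,x_1,u)\rangle - \langle p, b(t,x,u)\rangle = \langle p_1 - p, b(t_1,x_1,u)\rangle + \langle p, b(t_1,x_1,u) - b(t,x,u)\rangle,
\]
bounding the first piece by $M_b \abs{p_1 - p}$ and the second by $\abs{p}\sup_u \abs{b(t_1,x_1,u) - b(t,x,u)}$, while the running-cost contribution is controlled by $\sup_u \abs{\ell(t_1,x_1,u) - \ell(t,x,u)}$. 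As $(t_1,x_1,p_1) \to (t,x,p)$ the term $M_b \abs{p_1 - p} \to 0$, the factor $\abs{p}$ stays fixed, and the two suprema tend to $0$ precisely because $b$ and $\ell$ are continuous in $(t,x)$ uniformly with respect to $u$ (Hypotheses~\ref{assumpt:controlpb}-(\ref{hp:controlpb:b}) and~\ref{hyp:funzcost}-(\ref{hyp:l})). This yields joint continuity of $F$, and measurability is then immediate since continuous functions are Borel measurable.

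The one subtle point—and the reason I would prove continuity \emph{before} measurability—is that $b$ and $\ell$ are assumed continuous only in $(t,x)$, not in $u$, so a direct ``countable infimum over a dense subset of $U$'' argument for measurability is unavailable. Establishing joint continuity in $(t,x,p)$ and reading off measurability from it circumvents the measurable-selection difficulties that an infimum over the uncountable control space $U$ would otherwise raise; I expect this to be the only place requiring care, the three quantitative estimates being routine consequences of boundedness and the infimum inequality.
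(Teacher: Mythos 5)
Your proof is correct, and on the quantitative side it coincides with the paper's own argument: the same Lipschitz bound $\abs{F(t,x,p_1)-F(t,x,p_2)}\le \norm{b}_\infty\abs{p_1-p_2}$, the same linear growth bound $\abs{F(t,x,p)}\le \norm{b}_\infty\abs{p}+\norm{\ell}_\infty$, and the same splitting of the continuity estimate into a term $\norm{b}_\infty\abs{p_n-p}$ plus a supremum over $u$ that vanishes by the continuity of $b$ and $\ell$ in $(t,x)$ uniformly with respect to $u$. The one genuine divergence is measurability. The paper disposes of it first, in one line: since $F_{CV}$ is measurable and $U$ is separable, $F$ is measurable too. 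As stated, that argument is incomplete for exactly the reason you flag: replacing $\inf_{u\in U}$ by an infimum over a countable dense subset of $U$ requires some continuity (or at least lower semicontinuity) of $F_{CV}$ in $u$, which the hypotheses do not provide, and the infimum of a jointly measurable function over an uncountable parameter set need not be Borel. Your ordering --- prove joint continuity first, via the stability of the infimum applied to the family $\{F_{CV}(\cdot,\cdot,\cdot,u)\}_{u\in U}$, which is equicontinuous in $(t,x,p)$ by the hypotheses, and read off Borel measurability afterwards --- closes this gap cleanly and is the more defensible write-up; in effect the paper's continuity computation already does the work, and its separability remark is redundant. Two further details you make explicit that the paper leaves implicit are also worth keeping: the preliminary lower bound $F_{CV}(t,x,p,u)\ge -(\norm{b}_\infty\abs{p}+\norm{\ell}_\infty)$ guaranteeing the infimum is a finite real number (so the inequality $\abs{\inf g_1-\inf g_2}\le\sup\abs{g_1-g_2}$ is legitimate), and the observation that the $y$-slot of Hypothesis~\ref{ipotesi_su_F}-(\ref{ass:Flip}) is vacuous because the Hamiltonian does not depend on the function value.
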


\begin{proof}
Since $F_{CV}$ is clearly a measurable function and $U$ is separable, it follows that $F$ is a measurable function, too.

To check Hypothesis~\ref{ipotesi_su_F}-(\ref{ass:Flip}), let $(t,x)\in [0,T]\times \overline{H_+}$ and $p_1$, $p_2\in H$. Then,
\begin{equation*}
\abs{F(t,x,p_1)-F(t,x,p_2)} \leq \sup_{u\in U}\{\langle b(t,x,u),p_1-p_2\rangle\}\leq \abs{b}_\infty \abs{p_1-p_2}.
\end{equation*}

Finally, to check Hypothesis~\ref{ipotesi_su_F}-(\ref{ass:Flin}), let $(t,x)\in [0,T]\times \overline{H_+}$ and $p \in H$. Then,
\begin{equation*}
\abs{F(t,x,p)} \leq \abs{b}_\infty \abs{p} + \abs{\ell}_\infty.
\end{equation*}

To establish the continuity of $F$ consider $(t,x,p) \in [0,T] \times \overline{H_+} \times H$ and a sequence $(t_n,x_n,p_n)\to(t,x,p)$. Then,
\begin{align*}
\abs{F(t_n,x_n,p_n)&-F(t,x,p)}\leq \|b\|_\infty|p_n-p|+\\
&+\sup_{u\in U}\{|\langle b(t_n,x_n,u),p\rangle-\langle b(t,x,u),p\rangle+\ell(t_n,x_n,u)-\ell(t,x,u)|\},
\end{align*}
and hence the left hand side tends to $0$, since $b$ and $\ell$ are continuous in $(t,x)$, uniformly with respect to $u \in U$.
\end{proof}

\begin{theorem}\label{th:mildstronguniq}
Equation~\eqref{prob_per_verification} has a unique mild and $\cK$-strong solution in the sense of Definition~\ref{def:mildstrongoptctrl}.
\end{theorem}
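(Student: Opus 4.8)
The plan is to reduce, via the time reversal of Definition~\ref{def:mildstrongoptctrl}, to the initial--value problem~\eqref{HJB_2} and then to invoke Theorems~\ref{thm:esis_unic_sol_mild} and~\ref{thm:esist_strong_sol}. Setting $z(t,x) := v(T-t,x)$ and using that $\dD z(t,x) = \dD v(T-t,x)$, $\dD^2 z(t,x) = \dD^2 v(T-t,x)$ while $z_t(t,x) = -v_t(T-t,x)$, a direct computation shows that $v$ solves~\eqref{prob_per_verification} if and only if $z$ solves~\eqref{HJB_2} with initial datum $\phi$ and with nonlinearity $\hat F(t,x,y,p) := F(T-t,x,p)$, where $F$ is the Hamiltonian defined in~\eqref{eq:Hamilt}. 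Note that $\hat F$ does not depend on its $y$ argument, so the Hamiltonian of the control problem is simply a special instance of the nonlinearity admitted in~\eqref{HJB_2}.

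First I would verify that $\hat F$ and $\phi$ satisfy Hypothesis~\ref{ipotesi_su_F}. By Proposition~\ref{prop:F}, $F$ is measurable and satisfies both the Lipschitz bound~(\ref{ass:Flip}) and the linear growth bound~(\ref{ass:Flin}); since $t \mapsto T-t$ is a bijection of $[0,T]$ and these bounds are uniform in $t$, the map $\hat F$ inherits them (and its measurability follows from that of $F$). Moreover $\phi \in \dC_b(H_+) \subset \dB_b(H_+)$ by Hypothesis~\ref{hyp:funzcost}-(\ref{hyp:phi}), so Hypothesis~\ref{ipotesi_su_F}-(\ref{ass:phi}) holds as well, with the same $\delta \in (0,1)$ furnished by Hypothesis~\ref{assumpt:controlpb}-(\ref{hp:controlpb:AQ}). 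Theorem~\ref{thm:esis_unic_sol_mild} then provides a unique mild solution $z \in \dB^{0,1}_{b,\delta}([0,T]\times\overline{H_+})$ of~\eqref{HJB_2}, and, by Definition~\ref{def:mildstrongoptctrl}, $v(t,x) := z(T-t,x)$ is the corresponding unique mild solution of~\eqref{prob_per_verification}.

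For the $\cK$-strong solution I would apply Theorem~\ref{thm:esist_strong_sol} to $z$. Its hypotheses require $\phi \in \dC_b(H_+)$, which holds by Hypothesis~\ref{hyp:funzcost}-(\ref{hyp:phi}), and the continuity on $(0,T]\times H_+$ of the map $f(t,x) := \hat F(t,x,z(t,x),\dD z(t,x)) = F(T-t,x,\dD z(t,x))$. This is precisely where the continuity of $F$ on $[0,T]\times\overline{H_+}\times H$, established in Proposition~\ref{prop:F}, is used: by Theorem~\ref{thm:esis_unic_sol_mild} the gradient $\dD z$ is continuous on $(0,T]\times H_+$, so $(t,x)\mapsto (T-t,x,\dD z(t,x))$ is continuous there and $f$ is a composition of continuous maps. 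Theorem~\ref{thm:esist_strong_sol} then gives that $z$ is a $\cK$-strong solution of~\eqref{HJB_2}, unique in $\dB^{0,1}_{b,\delta}([0,T]\times\overline{H_+})$; reversing time once more through Definition~\ref{def:mildstrongoptctrl} delivers the unique $\cK$-strong solution $v$ of~\eqref{prob_per_verification}.

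There is no substantial obstacle here: the statement is essentially a corollary obtained by transporting the abstract results of Sections~\ref{sec:mildsol} and~\ref{sec:strongsol} through the time reversal. The only points that deserve care are the bookkeeping of this reversal --- checking that all the relevant hypotheses, being uniform in $t$, are preserved under $t\mapsto T-t$ and that the terminal datum becomes an initial datum --- and the verification of the continuity of $f$ required by Theorem~\ref{thm:esist_strong_sol}, which rests on the continuity of the Hamiltonian $F$ proved in Proposition~\ref{prop:F} together with the continuity of $\dD z$ supplied by Theorem~\ref{thm:esis_unic_sol_mild}.
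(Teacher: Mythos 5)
Your proposal is correct and takes essentially the same route as the paper: verify that $\phi$ and the Hamiltonian $F$ satisfy Hypothesis~\ref{ipotesi_su_F} (via Hypothesis~\ref{hyp:funzcost} and Proposition~\ref{prop:F}) and that $A$, $Q$ satisfy Hypotheses~\ref{ipotesi} and~\ref{ipotesi_su_Lambda}, then apply Theorem~\ref{thm:esis_unic_sol_mild} and Theorem~\ref{thm:esist_strong_sol} through the time reversal built into Definition~\ref{def:mildstrongoptctrl}. Your explicit verification that $f(t,x) = F(T-t,x,\dD z(t,x))$ is continuous on $(0,T]\times H_+$ --- combining the continuity of $F$ from Proposition~\ref{prop:F} with the continuity of $\dD z$ furnished by Theorem~\ref{thm:esis_unic_sol_mild} --- is a detail the paper's terse proof leaves implicit, and spelling it out is a welcome addition rather than a deviation.
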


\begin{proof}
First of all, we observe that, thanks to Assumption~\ref{hyp:funzcost}-(\ref{hyp:phi}), the terminal cost $\phi$ satisfies~\ref{ipotesi_su_F}-(\ref{ass:phi}). Moreover, by Proposition~\ref{prop:F}, the Hamiltonian $F$ satisfies \ref{ipotesi_su_F}-(\ref{ass:Flip})-(\ref{ass:Flin}). Finally, $Q$ and $A$, verify Hypotheses~\ref{ipotesi} and \ref{ipotesi_su_Lambda}, thanks to Hypothesis~\ref{assumpt:controlpb}.

Therefore, we can apply Theorem~\ref{thm:esis_unic_sol_mild} and conclude that Equation~\eqref{prob_per_verification} has a unique mild solution in the sense of Definition~\ref{def:mildstrongoptctrl}.

Finally, since $\phi \in \dC_b(H_+)$ by Hypothesis~\ref{hyp:funzcost}-\eqref{hyp:phi} and $F$ is continuous on $[0,T]\times \overline{H}_+ \times H$ thanks to Proposition~\ref{prop:F}, we can apply Theorem~\ref{thm:esist_strong_sol}
to deduce that Equation~\eqref{prob_per_verification} has a unique $\cK$-strong solution in the sense of Definition~\ref{def:mildstrongoptctrl}.
\end{proof}

Our next aim is to establish a verification theorem. To do so, we need to show that the following fundamental identity holds.

\begin{lemma}\label{lem:fundid}
\label{lm4:fundidcontrol}
Suppose that Hypotheses~\ref{assumpt:controlpb} and \ref{hyp:funzcost} hold.
Let $v$ be the unique mild and \mbox{$\cK$-strong} solution to~\eqref{prob_per_verification}. Then, for every $t \in [0,T]$, $x \in \overline{H_+}$, and $\bfu \in \cU_{ad}$, it holds that
\begin{multline}\label{eq:fundid}
v(t,x) = J(t,x,\bfu) \\
- \bbE \int_t^{T \land \tau} \left[ F_{CV}\left(s,X(s),\dD v(s,X(s),u(s))\right) -
F\left(s,X(s),\dD v(s,X(s))\right)\right]ds.
\end{multline}
where $X = (X(s;t,x,\bfu))_{s \in [t,T]}$ is the mild solution to~\eqref{eq:controlledSDE}.
\end{lemma}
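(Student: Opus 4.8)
The plan is to leverage the $\cK$-strong solution structure of $v$: one first establishes the identity for smooth approximants and then lets $n \to \infty$. Observe first that if $x \in \partial H_+$ then $\tau = t$, both integrals in~\eqref{eq:fundid} are over an empty interval, $J(t,x,\bfu) = 0$, and $v(t,x) = 0$ by the boundary condition, so the identity is trivial; I may thus assume $x \in H_+$. By Definitions~\ref{def:mildstrongoptctrl} and~\ref{def:strongsol} (read after the time reversal $t \mapsto T-t$, cf.\ Theorem~\ref{th:mildstronguniq}), there are classical solutions $\{v_n\}_{n \in \N}$ to the time-reversed approximating problems~\eqref{eq:HJBmodapprox}, with data $\phi_n \to \phi$ and $g_n \to 0$, such that $v_n \to v$ and $\dD v_n \to \dD v$ in the $\cK$-sense. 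Each $v_n$ has the regularity of Definition~\ref{def:classicsol}, in particular $v_n(t,\cdot) \in \dUC_b^{2,A}(H_+)$ uniformly in $t$ and $v_n(\cdot,x) \in \dC^1([0,T])$; this is exactly the regularity needed to apply the Dynkin formula to $s \mapsto v_n(s,X(s))$ along the mild solution $X = (X(s;t,x,\bfu))_{s \in [t,T]}$ of~\eqref{eq:controlledSDE}.

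Applying the It\^o--Dynkin formula for mild solutions (see, e.g., the corresponding result in~\citep{fabbri:soc}) on $[t, T \wedge \tau]$, the elliptic part combines into $\tfrac12 \Tr[Q \dD^2 v_n] + \langle A^* \dD v_n, X(s)\rangle$, which by the (time-reversed) equation~\eqref{eq:HJBmodapprox} equals $-\partial_t v_n - F(s,X(s),\dD v_n) - g_n(s,X(s))$; the controlled drift adds $\langle \dD v_n, b(s,X(s),u(s))\rangle = F_{CV}(s,X(s),\dD v_n,u(s)) - \ell(s,X(s),u(s))$ by~\eqref{eq:HCV}. The $\partial_t v_n$ terms cancel, and the stochastic integral, stopped at the bounded stopping time $T \wedge \tau$, is a mean-zero martingale since $\dD v_n$ is bounded on $[0,T] \times H_+$. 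For the boundary term I use that $X$ has continuous paths and starts in the open half-space, so at the first entrance time into $\overline{H_-}$ one has $\langle X(\tau), \bar y\rangle = 0$, i.e.\ $X(\tau) \in \partial H_+$ and $v_n(\tau,X(\tau)) = 0$; on $\{T < \tau\}$ instead $v_n(T,X(T)) = \phi_n(X(T))$. Taking expectations thus gives, for each $n$,
\begin{multline*}
v_n(t,x) = \bbE\big[\ind_{\{T<\tau\}}\phi_n(X(T))\big] + \bbE \int_t^{T \wedge \tau} \ell(s,X(s),u(s)) \, \dd s + \bbE \int_t^{T \wedge \tau} g_n(s,X(s)) \, \dd s \\
- \bbE \int_t^{T \wedge \tau} \big[ F_{CV}(s,X(s),\dD v_n(s,X(s)),u(s)) - F(s,X(s),\dD v_n(s,X(s))) \big] \, \dd s.
\end{multline*}

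The final step is the limit $n \to \infty$. One has $v_n(t,x) \to v(t,x)$, and $\bbE[\ind_{\{T<\tau\}}\phi_n(X(T))] \to \bbE[\ind_{\{T<\tau\}}\phi(X(T))]$ by dominated convergence, using $\sup_n \norm{\phi_n}_{\dB_b} < \infty$ and the pointwise convergence $\phi_n(X(T)(\omega)) \to \phi(X(T)(\omega))$ coming from $\cK$-convergence on the singleton $\{X(T)(\omega)\}$. For the remaining two integrals I use the Lipschitz continuity of $F$ and $F_{CV}$ in the gradient variable (Proposition~\ref{prop:F}), bounding the integrands by $\abs{b}_\infty \abs{\dD v_n(s,X(s)) - \dD v(s,X(s))}$ and by $\abs{g_n(s,X(s))}$. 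After the time reversal the uniform $\cK$-bounds give $\abs{\dD v_n(s,\cdot)}, \abs{\dD v(s,\cdot)}, \abs{g_n(s,\cdot)} \leq C (T-s)^{-\delta}$ with $C$ independent of $n$; since $\delta < 1$, the deterministic function $(T-s)^{-\delta}$ is integrable on $[t,T]$ and dominates on $[t,T] \times \Omega$. As for a.e.\ $(s,\omega)$ one has $X(s)(\omega) \in H_+$ (namely for $s < \tau(\omega)$), $\cK$-convergence on the compact singleton $\{X(s)(\omega)\}$ yields $\dD v_n(s,X(s)) \to \dD v(s,X(s))$ and $g_n(s,X(s)) \to 0$ pointwise. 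Dominated convergence then kills the $g_n$-integral and replaces $\dD v_n$ by $\dD v$, and recognizing $\bbE[\ind_{\{T<\tau\}}\phi(X(T))] + \bbE\int_t^{T\wedge\tau}\ell(s,X(s),u(s))\,\dd s = J(t,x,\bfu)$ from~\eqref{eq:costfunct} gives exactly~\eqref{eq:fundid}.

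The main obstacle I anticipate is the rigorous justification of the Dynkin formula for the mild solution $X$ stopped at the random time $T \wedge \tau$: one must verify that the smoothness encoded in $\dUC_b^{2,A}(H_+)$ (Definition~\ref{def:classicsol}) suffices to make sense of the term $\langle A^* \dD v_n, X(s)\rangle$ along a process that need not take values in $D(A)$, and that stopping at the exit time is legitimate, which relies on the path continuity of $X$ and on the identification $X(\tau) \in \partial H_+$. A secondary delicate point is the interchange of limit and expectation near $s = T$, where $\dD v$ is singular of order $(T-s)^{-\delta}$; this is controlled not by uniform convergence but by the integrability of $(T-s)^{-\delta}$ for $\delta < 1$, together with $\cK$-convergence along almost every trajectory.
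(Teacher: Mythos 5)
Your proposal is correct and follows the same skeleton as the paper's proof: time reversal, the classical approximants $v_n$ coming from the $\cK$-strong solution, the Dynkin formula of \citep[Equation~(1.109)]{fabbri:soc}, cancellation of $\partial_t v_n$ against the elliptic part via the approximating PDE, and a final passage $n \to \infty$ by dominated convergence using the uniform bound $\sup_n \norm{\dD v_n(s,\cdot)} \leq C(T-s)^{-\delta}$ with $\delta < 1$ together with pointwise $\cK$-convergence along trajectories in the open half-space. The one genuine difference is how the boundary term is eliminated, and it sits exactly at the point you flagged as your ``main obstacle.'' The paper does not apply Dynkin up to $T \land \tau$ but up to $T \land \tau_\epsilon$ with $\tau_\epsilon \coloneqq (\tau - \epsilon) \vee t$, so that $X(\tau_\epsilon) \in H_+$ stays strictly inside the open half-space; it then takes $n \to \infty$ first (which is legitimate there, since $\cK$-convergence of $v_n$ holds on compacts of $H_+$, not of $\overline{H_+}$), picks up the extra term $\bbE[\ind_{T \geq \tau_\epsilon}\, v(\tau_\epsilon, X(\tau_\epsilon))]$, and only afterwards sends $\epsilon \to 0$, using path continuity, $X(\tau) \in \partial H_+$, and the vanishing of the limit $v$ on the boundary. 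You instead stop at $\tau$ itself and use that each approximant satisfies $v_n(\tau, X(\tau)) = 0$ because $v_n \in \dC_0([0,T] \times \overline{H_+})$, so the boundary term is identically zero for every $n$ and only the single limit $n \to \infty$ remains. This is a legitimate and arguably cleaner limit structure (it needs boundary continuity of the smooth $v_n$ rather than of the limit $v$), but as stated it leaves a small gap: the cited Dynkin formula is for functions on all of $H$, while $v_n$ lives only on $\overline{H_+}$ and cannot be smoothly extended across $\partial H_+$ by reflection, so stopping at the exit time itself must be justified. The repair is routine and is precisely the paper's device run for fixed $n$: apply Dynkin up to $T \land \tau_\epsilon$, where $X$ stays in $H_+$, and let $\epsilon \to 0$ using $v_n \in \dC_0([0,T] \times \overline{H_+})$ and the uniform bound $\sup_{t \in [0,T]} \norm{v_n(t,\cdot)}_{\dUC_b^{2,A}(H_+)} < +\infty$ from Definition~\ref{def:classicsol}, which dominates all integrands for fixed $n$. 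With that sentence added, your argument closes completely, and everything else — the treatment of $x \in \partial H_+$, the martingale property of the stopped stochastic integral, the convergence $\bbE[\ind_{\{T<\tau\}}\phi_n(X(T))] \to \bbE[\ind_{\{T<\tau\}}\phi(X(T))]$, and the Lipschitz estimate $\abs{b}_\infty \abs{\dD v_n - \dD v}$ controlling the Hamiltonian terms — matches the paper's reasoning.
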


\begin{proof}
Let $z(t,x) \coloneqq v(T-t,x)$, $(t,x) \in [0,T]\times \overline{H_+}$, so that, according to Definition~\ref{def:mildstrongoptctrl}, $z \in \dB^{0,1}_{b,\delta}([0,T]\times \overline{H_+})$ is the unique $\cK$-strong solution to~\eqref{HJB_2}, in the sense of Definition~\ref{def:strongsol}.

Therefore, there exist three sequences $\{z_n\}_{n \in \N} \subset \dC_b([0,T] \times \overline{H_+})$, $\{\phi_n\}_{n \in \N} \subset \dUC_b^{2,A}(\overline{H_+})$, and $\{g_n\}_{n \in \N} \subset \dB_{b,\delta}((0,T] \times H_+)$, such that, for every $n \in \N$, $z_n$ is a classical solution to~\eqref{eq:HJBmodapprox}, in the sense of Definition~\ref{def:classicsol}. Moreover, we have that
\begin{equation*}
\begin{dcases}
\klim_{n \to \infty} \phi_n = \phi, &\text{in } \dB_b(H_+), \\
\klim_{n \to \infty} g_n = 0, &\text{in } \dB_{b,\delta}((0,T] \times H_+), \\
\klim_{n \to \infty} z_n = z, &\text{in } \dB_b([0,T] \times {H_+}), \\
\klim_{n \to \infty} \dD z_n = \dD z, &\text{in } \dB_{b,\delta}((0,T] \times {H_+};H). \\
\end{dcases}
\end{equation*}

It is clear that~\eqref{eq:fundid} holds for any $t \in [0,T]$, whenever $x \in \partial H_+$. Indeed, in this case $\tau = t$, $\bbP$-a.s., and both the left and the right hand sides of~\eqref{eq:fundid} are equal to $0$. Therefore, we can restrict our attention to the case where $(t,x) \in [0,T] \times H_+$.

We show, first, that~\eqref{eq:fundid} holds for the functions $v_n(t,x) \coloneqq z_n(T-t,x)$, $(t,x) \in [0,T]\times \overline{H_+}$. It is clear that, for all $n \in \N$, $v_n$ and $z_n$ enjoy the same regularity properties, as listed in Definition~\ref{def:classicsol}, and that $v_n$ satisfies
\begin{equation*}
\left\{
  \begin{aligned}
  &\partial_t v_n(t,x)+\frac{1}{2}\Tr[Q\dD^2v_n(t,x)]+\langle x, A^*\dD v_n(t,x)\rangle \\
	&\qquad \qquad \quad + F(t,x,v_n(t,x),\dD v_n(t,x)) + g_n(t,x)=0, & &x\in H_+, \, t\in [0,T),\\
  &v_n(T,x)=\phi_n(x), & &x\in H_+, \\
  &v_n(t,x)=0, & &x\in \partial H_+, \, t\in[0,T],
  \end{aligned}
\right.
\end{equation*}
where $\partial_t$ denotes the time-derivative. It is also clear that
\begin{equation*}
\begin{dcases}
\klim_{n \to \infty} v_n = v, &\text{in } \dB_b([0,T] \times H_+), \\
\klim_{n \to \infty} \dD v_n = \dD v, &\text{in } \overline{\dB}_{b,\delta}([0,T) \times H_+;H), \\
\end{dcases}
\end{equation*}
where\footnote{For more details on this function space, see~\citep[Definition~4.24]{fabbri:soc}.}
\begin{equation*}
\overline{\dB}_{b,\delta}([0,T) \times H_+;H) \coloneqq \{f \colon [0,T) \times H_+ \to H, \text{ s.t. } f(T-\cdot, \cdot) \in \dB_{b,\delta}((0,T] \times H_+;H)\}.
\end{equation*}

Thanks to these facts and to Hypothesis~\ref{assumpt:controlpb}, we can apply the Dynkin formula of
\citep[Equation~(1.109)]{fabbri:soc} to the process $\left(v_n(s,X(s))\right)_{s\in[t,T]}$. For $\epsilon > 0$, let us define the stopping time $\tau_\epsilon \coloneqq (\tau-\epsilon) \vee t$. Then,
\begin{align}
&\mathop{\phantom{=}}\bbE[v_n(T \land \tau_\epsilon, X(T \land \tau_\epsilon))] = v_n(t,x)
\notag \\
&\quad +\bbE \int_t^{T \land \tau_\epsilon} \bigl[\partial_t v_n(s, X(s)) + \prodscal{X(s), A^* \dD v_n(s,X(s))} + \prodscal{\dD v_n(s,X(s)), b(s,X(s),u(s))}\bigr] \, \dd s
\notag \\
&\quad + \bbE \int_t^{T \land \tau_\epsilon} \dfrac 12 \Tr[Q \dD^2 v_n(s,X(s))] \, \dd s
\notag \\
&=\bbE \int_t^{T \land \tau_\epsilon} \prodscal{\dD v_n(s,X(s)), b(s,X(s),u(s))} \, \dd s
\notag \\
&\quad -\bbE \int_t^{T \land \tau_\epsilon} \bigl[F(s,X(s),\dD v_n(s,X(s))) + g_n(s,X(s))\bigr] \, \dd s. \label{eq:Dyn1}
\end{align}
Clearly, on $\{T < \tau_\epsilon\}$ we have that $T \land \tau_\epsilon = T$, whence $X(T \land \tau_\epsilon) = X(T) \in H_+$, and $v_n(T \land \tau_\epsilon, X(T \land \tau_\epsilon)) = v_n(T,X(T)) = \phi(X(T))$. 

Therefore we have, $\bbP$-a.s., 
$$\bbE[v_n(T \land \tau_\epsilon, X(T \land \tau_\epsilon))] = \bbE[\ind_{T < \tau_\epsilon} v_n(T \land \tau_\epsilon, X(T \land \tau_\epsilon))] 
+ \bbE[\ind_{T \ge  \tau_\epsilon} v_n(\tau_\epsilon, X(\tau_\epsilon))] 
$$ $$
= \bbE[\ind_{T < \tau_\epsilon} \phi_n(X(T))] + \bbE[\ind_{T \ge  \tau_\epsilon} v_n(\tau_\epsilon, X(\tau_\epsilon))].
$$
Taking into account this equality, adding $J(t,x,\bfu)$ on both sides of~\eqref{eq:Dyn1}, and rearranging the terms, we get
\begin{align*}
&\mathop{\phantom{+}} v_n(t,x) = J(t,x,\bfu) + \bbE[\ind_{T < \tau_\epsilon} \{\phi_n(X(T)) - \phi(X(T))\}] + \bbE \int_t^{T \land \tau_\epsilon} g_n(s,X(s)) \, \dd s
\\
&- \bbE \int_t^{T \land \tau_\epsilon} \bigl[F_{CV}(s,X(s),\dD v_n(s,X(s)), u(s)) - F(s,X(s), \dD v_n(s,X(s)))\bigr] \, \dd s 
\\ & + \bbE[\ind_{T \ge  \tau_\epsilon} v_n(\tau_\epsilon, X(\tau_\epsilon))].
\end{align*}
Using the $\cK$-convergences previously recalled and the dominated convergence theorem, we can take the limit as $n \to +\infty$ in the last equality to get
\begin{align*}
&\mathop{\phantom{+}} v(t,x) = J(t,x,\bfu) + \bbE[\ind_{T \ge  \tau_\epsilon} v(\tau_\epsilon, X(\tau_\epsilon))].
\\
&- \bbE \int_t^{T \land \tau_\epsilon} \bigl[F_{CV}(s,X(s),\dD v(s,X(s)), u(s)) - F(s,X(s), \dD v(s,X(s)))\bigr] \, \dd s.
\end{align*}
To conclude, it suffices to observe that $\tau_\epsilon \to \tau$, $\bbP$-a.s., as $\epsilon \to 0$, and that $X(\tau) \in \partial H_+$, and hence $\bbE [\ind_{T \ge  \tau} v(\tau, X(\tau) )] =0$. Therefore, using the monotone convergence theorem and the dominated convergence theorem, we get the claim taking the limit as $\epsilon \to 0$ in the last equality.
\end{proof}

We can now state a verification theorem. Its proof is standard and a straightforward consequence of Lemma~\ref{lem:fundid}; therefore, we omit it (for more details see, e.g., \citep[Theorem~2.36]{fabbri:soc}).
\begin{theorem}\label{th:ver}
Suppose that Hypotheses~\ref{assumpt:controlpb} and \ref{hyp:funzcost} hold.
Let $v$ be the mild and $\cK$-strong solution of~\eqref{prob_per_verification}. Then,
\begin{equation*}
v(t,x) \leq V(t,x), \quad \forall \, (t,x) \in [0,T] \times \overline{H_+}.
\end{equation*}

Suppose, moreover, that $\bfu^* \in \cU_{ad}$ is such that, for fixed $(t,x) \in [0,T] \times \overline{H_+}$, the pair $(X^*, \bfu^*)$, where $X^* \coloneqq (X(s; t,x,\bfu^*))_{s \in [t,T]}$, verifies
\begin{equation}\label{eq:optfb}
u^*(s) \in \arg\min_{u \in U}
F_{CV}(s, X^*(s), \dD v(s, X^*(s)),u),
\end{equation}
for almost every $s\in [t,T]$ and $\mathbb{P}$-a.s.
Then, the pair $(X^*, \bfu^*)$ is optimal at $(t,x)$ and $v(t,x) = V(t,x)$.
\end{theorem}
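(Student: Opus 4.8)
The plan is to derive the theorem directly from the fundamental identity of Lemma~\ref{lem:fundid}, whose proof already carries out all of the analytic work. The single structural observation that drives everything is that, by the very definition~\eqref{eq:Hamilt} of the Hamiltonian as an infimum, we have $F_{CV}(s,x,p,u) \geq F(s,x,p)$ for every $u \in U$; consequently the integrand appearing in~\eqref{eq:fundid} is pointwise nonnegative along any admissible trajectory.

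First I would establish the inequality $v \leq V$. Fix $(t,x) \in [0,T] \times \overline{H_+}$ and an arbitrary $\bfu \in \cU_{ad}$, with associated state $X = (X(s;t,x,\bfu))_{s \in [t,T]}$. Applying Lemma~\ref{lem:fundid} and discarding the nonnegative integral term in~\eqref{eq:fundid}, one obtains $v(t,x) \leq J(t,x,\bfu)$. Since $\bfu$ was arbitrary, taking the infimum over $\cU_{ad}$ and recalling~\eqref{eq:valuefunct} yields $v(t,x) \leq V(t,x)$, which is the first assertion.

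For the optimality statement, suppose $\bfu^* \in \cU_{ad}$ satisfies the feedback relation~\eqref{eq:optfb} along its own trajectory $X^*$. The condition $u^*(s) \in \arg\min_{u \in U} F_{CV}(s,X^*(s),\dD v(s,X^*(s)),u)$ means precisely that
\[
F_{CV}(s,X^*(s),\dD v(s,X^*(s)),u^*(s)) = F(s,X^*(s),\dD v(s,X^*(s)))
\]
for almost every $s \in [t,T]$ and $\bbP$-almost surely, so the integrand in~\eqref{eq:fundid} vanishes. Lemma~\ref{lem:fundid} applied to the pair $(X^*,\bfu^*)$ therefore collapses to $v(t,x) = J(t,x,\bfu^*)$. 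Combining this with the already-proved bound $v(t,x) \leq V(t,x)$ and the trivial estimate $V(t,x) \leq J(t,x,\bfu^*)$ (valid since $\bfu^*$ is admissible) forces $v(t,x) = V(t,x) = J(t,x,\bfu^*)$, whence $\bfu^*$ is optimal at $(t,x)$.

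I do not expect a genuine obstacle here, as the heavy lifting is already contained in Lemma~\ref{lem:fundid}; the proof is the standard verification argument and the only points deserving care are bookkeeping. One should confirm that all expectations in~\eqref{eq:fundid} are finite—guaranteed by the boundedness of $\ell$ and $\phi$ in Hypothesis~\ref{hyp:funzcost}, of $b$ in Hypothesis~\ref{assumpt:controlpb}, and by the gradient bound stemming from $v \in \dB^{0,1}_{b,\delta}([0,T]\times\overline{H_+})$ together with the moment estimate on $\sup_{s}\abs{X(s)}$—and that $\dD v(s,X^*(s))$ is meaningfully evaluated, which holds because $X^*(s) \in H_+$ for all $s < \tau$ by the definition of the exit time $\tau$. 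With these routine checks in place the argument is complete, and indeed the authors note that it may be omitted as standard.
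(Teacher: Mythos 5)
Your proposal is correct and follows exactly the route the paper intends: the authors omit the proof of Theorem~\ref{th:ver} precisely because it is the standard verification argument from the fundamental identity of Lemma~\ref{lem:fundid} (they cite \citep[Theorem~2.36]{fabbri:soc}), namely discarding the nonnegative term $F_{CV}-F\geq 0$ to get $v\leq J(t,x,\bfu)$ for every admissible control, and observing that the argmin condition~\eqref{eq:optfb} makes the integrand vanish so that $v(t,x)=J(t,x,\bfu^*)\geq V(t,x)\geq v(t,x)$. Your additional integrability and well-posedness checks are sound and consistent with the hypotheses, so nothing is missing.
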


We conclude our paper with a final result concerning optimal feedback controls, which is a corollary of Theorem~\ref{th:ver}.

Let $v$ be the unique mild and $\cK$-strong solution to~\eqref{prob_per_verification} and let us define the set-valued function
\begin{equation}\label{eq:fbmap}
\Phi(t,x) \coloneqq \arg\min_{u \in U}
F_{CV}(t, x, \dD v(t, x),u), \quad (t,x) \in (0,T) \times \overline{H_+}.
\end{equation}
Function $\Phi$ is the feedback map for our optimal control problem and the associated \emph{closed loop equation} is the stochastic differential inclusion
\begin{equation*}
\begin{dcases}
\dd X(s) \in [AX(s) + b(s,X(s),\Phi(s,X(s)))] \, \dd s + \sqrt Q \, \dd W(s), & s \in [t,T], \\
X(t) = x \in H.
\end{dcases}
\end{equation*}

Here is our final result. Its proof is omitted, since it is a direct consequence of Theorem~\ref{th:ver} (for more details see, e.g., \citep[Corollary~2.38]{fabbri:soc}).
\begin{corollary}
Fix $(t,x) \in [0,T) \times \overline{H_+}$ and let $v$ be the unique mild and $\cK$-strong solution to~\eqref{prob_per_verification}. Suppose that Hypotheses~\ref{assumpt:controlpb} and \ref{hyp:funzcost} hold and assume that, on $[t, T) \times \overline{H_+}$, the feedback map $\Phi$, defined in~\eqref{eq:fbmap}, admits a measurable selection $\phi_t \colon [t,T) \times \overline{H_+} \to U$ such that the closed loop equation
\begin{equation}\label{eq:cle}
\begin{dcases}
\dd X(s) = [AX(s) + b(s,X(s),\phi_t(s,X(s)))] \, \dd s + \sqrt Q \, \dd W(s), & s \in [t,T], \\
X(t) = x,
\end{dcases}
\end{equation}
has a mild solution (in the sense of~\citep[Definition~1.119]{fabbri:soc}), denoted by $X_{\phi_t} \coloneqq (X_{\phi_t}(s;t,x))_{s \in [t,T]}$.
Define also $\bfu_{\phi_t} \coloneqq (u_{\phi_t}(s))_{s \in [t,T]}$, where $u_{\phi_t}(s) = \phi_t(s, X_{\phi_t}(s))$.

If $\bfu_{\phi_t} \in \cU_{ad}$, then the pair $(\bfu_{\phi_t}, X_{\phi_t})$ is optimal at $(t, x)$ and $v(t, x) = V(t, x)$. 	
Moreover, if $\Phi(t, x)$ is always a singleton\footnote{This happens, e.g., when $b$ is linear and $\ell$ is quadratic and strictly convex.} and the mild solution of~\eqref{eq:cle} is unique, then $\bfu_{\phi_t}$ is the unique optimal control.
\end{corollary}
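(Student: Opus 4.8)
The plan is to deduce the statement from the verification Theorem~\ref{th:ver} and, for the uniqueness part, from the fundamental identity of Lemma~\ref{lem:fundid}. First I would prove optimality. The key observation is that, by construction, the control $\bfu_{\phi_t}$ with $u_{\phi_t}(s) = \phi_t(s, X_{\phi_t}(s))$ turns the open-loop state equation~\eqref{eq:controlledSDE} into the closed loop equation~\eqref{eq:cle}; hence the trajectory $X(\cdot\,; t,x,\bfu_{\phi_t})$ driven by $\bfu_{\phi_t}$ coincides with $X_{\phi_t}$. Since $\phi_t$ is a measurable selection of the feedback map $\Phi$ of~\eqref{eq:fbmap}, for almost every $s \in [t,T]$ and $\bbP$-a.s. we have
\[
u_{\phi_t}(s) = \phi_t(s, X_{\phi_t}(s)) \in \Phi(s, X_{\phi_t}(s)) = \arg\min_{u \in U} F_{CV}(s, X_{\phi_t}(s), \dD v(s, X_{\phi_t}(s)), u),
\]
which is exactly condition~\eqref{eq:optfb} with $\bfu^* = \bfu_{\phi_t}$ and $X^* = X_{\phi_t}$. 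As $\bfu_{\phi_t} \in \cU_{ad}$ by hypothesis, Theorem~\ref{th:ver} applies directly and gives that $(X_{\phi_t}, \bfu_{\phi_t})$ is optimal at $(t,x)$ and $v(t,x) = V(t,x)$.

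For uniqueness I would argue as follows. Let $\bfu^{**} \in \cU_{ad}$ be any optimal control, with associated trajectory $X^{**}$ and exit time $\tau$. Applying the fundamental identity~\eqref{eq:fundid} to $\bfu^{**}$ and using optimality, $J(t,x,\bfu^{**}) = V(t,x) = v(t,x)$, forces the gap term to vanish:
\[
\bbE \int_t^{T \land \tau} \left[ F_{CV}(s, X^{**}(s), \dD v(s, X^{**}(s)), u^{**}(s)) - F(s, X^{**}(s), \dD v(s, X^{**}(s))) \right] \dd s = 0.
\]
By the definition~\eqref{eq:Hamilt} of $F$ as the pointwise infimum of $F_{CV}$, the integrand is non-negative, so it must vanish for almost every $s$ and $\bbP$-a.s. on $[t, T \land \tau]$; equivalently, $u^{**}(s) \in \Phi(s, X^{**}(s))$. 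Under the singleton assumption on $\Phi$, this yields $u^{**}(s) = \phi_t(s, X^{**}(s))$, so $X^{**}$ solves the closed loop equation~\eqref{eq:cle}. By the assumed uniqueness of its mild solution, $X^{**} = X_{\phi_t}$, and therefore $u^{**}(s) = \phi_t(s, X_{\phi_t}(s)) = u_{\phi_t}(s)$, i.e. $\bfu^{**} = \bfu_{\phi_t}$.

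The main obstacle I expect is the bookkeeping around the exit-time structure in the uniqueness step: the fundamental identity only controls the running gap on the stochastic interval $[t, T \land \tau]$, so one must check that vanishing of the non-negative integrand there is enough to identify $u^{**}$ with the selection $\phi_t$ along $X^{**}$, and that the ensuing identification $X^{**} = X_{\phi_t}$ via uniqueness of~\eqref{eq:cle} is consistent with the fact that $\tau$ is itself determined by the trajectory. A secondary, more routine point is verifying that $s \mapsto \phi_t(s, X_{\phi_t}(s))$ is progressively measurable, which follows from the measurability of the selection $\phi_t$ together with the continuity of the trajectories of $X_{\phi_t}$; this is implicitly guaranteed by the standing assumption $\bfu_{\phi_t} \in \cU_{ad}$.
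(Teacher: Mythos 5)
Your proposal is correct and follows exactly the route the paper intends: the paper omits the proof of this corollary precisely because it is a direct consequence of Theorem~\ref{th:ver} (with uniqueness via the fundamental identity of Lemma~\ref{lem:fundid}, following \citep[Corollary~2.38]{fabbri:soc}), which is the argument you give. The exit-time caveat you flag is real but harmless: the vanishing of the nonnegative gap term identifies $u^{**}$ with the feedback selection only on $[t, T\wedge\tau]$, and since the cost functional~\eqref{eq:costfunct} is blind to the control after $\tau$, the uniqueness assertion is to be understood up to the exit time, consistently with the paper's statement.
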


\begin{example}
We consider here a simplified version of the spatial economic growth problem of~\citep{BFFGjoeg} in a stochastic and finite time horizon setting (see also~\citep{boucekkine:spatialAK}, \citep{CalviaFedericoGozzi21}, and \citep{GozziLeocata22}).
We provide this example as a motivation for our paper and for future research in exit time optimal control problems and optimal control problems with state space constraints, as explained in the Introduction. Indeed, as we will see in a moment, our example is related to both kind of dynamic optimization problems and it is formulated in a slightly more general setting than the one of Section~\ref{sec:optctrl}. This will give us the chance to point out possible future developments of our research. In the following, we denote by $\bfS^1$ the unit circle and we set $H \coloneqq \dL^2(\bfS^1)$, that is, we consider $H$ to be the set of square-integrable functions on $\bfS^1$.

We present, first, a finite time horizon version of the model studied in~\citep{BFFGjoeg}, which is formulated in a deterministic setting. In this paper the state variable is the capital stock $k(t,\xi)$ and the control variable, which is assumed to be non-negative, is the consumption flow $c(t,\xi)$; both variables depend on time $t \in [0,T]$, where $T > 0$ is a fixed time horizon, and on the spatial position $\xi\in \bfS^1$. Given suitable bounded measurable data $A \colon \bfS^1\to \R_+$, $k_0 \colon \bfS^1\to \R_+$, and $t \in [0,T]$, we consider the state equation
\begin{equation}\label{eq:K}
\begin{cases}
\dd k(s,\xi) = \left[\dfrac{\partial^2}{\partial \xi^2} k(s,\xi)
+ A(\xi) k(s,\xi) - c(s,\xi) \right] \dd s, &
(s,\xi) \in (t,T] \times \bfS^1, \\
k(t,\xi) = k_0(\xi), & \xi \in \bfS^1.
\end{cases}
\end{equation}
We call $k^{t,k_0,c}$ the unique solution to such state equation.
We require the state to satisfy the constraint $k(s,\cdot) \in H_+$, for all $s \in [t,T]$, where
\begin{equation}\label{eq:vincoloBFFG}
H_+ \coloneqq \left\{k \in \dL^2(\bfS^1) \colon \int_0^{2\pi} k(\xi) \bar y(\xi) \, \dd \xi > 0\right\},
\end{equation}
and $\bar y$ is a strictly positive eigenvector (unique up to a multiplication by a scalar, as explained in~\cite[Section 4]{CalviaFedericoGozzi21}) associated to the operator $L \colon D(L) \subseteq \dL^2(\bfS^1)\to \dL^2(\bfS^1)$, defined as
\begin{equation*}
[Lk](\xi)=\dfrac{\partial^2}{\partial \xi^2} k(\xi)
 + A(\xi) k(\xi), \quad x \in \bfS^1.
\end{equation*}
The objective functional to be maximized in this finite time horizon setting is
\begin{equation*}
J_0(t,k_0;c) \coloneqq
\int_{t}^T \int_0^{2\pi}
U_0(s,k^{t,k_0,c}(s,\xi),c(s,\xi)) \, \dd s, \quad t \in [0,T],
\end{equation*}
where $U_0 \colon [0,T] \times \R \times \R_+ \to \R$ is a suitable running gain function and controls $c$ are chosen in the set
\begin{equation*}
\cA_{ad}^D(t, k_0) \coloneqq \{c \in \dL^1((t,T]; \dL^2({\bf S^1}; [0,+\infty))) \colon k^{t,k_0,c}(s, \cdot) \in H_+, \, \text{for a.a. } s \in (t,T]\},
\end{equation*}
where the dependence of this set on $(t,k_0) \in [0,T] \times H_+$, is due to the fact that the solution to~\eqref{eq:K} depends on this pair.

If we assume that $U_0(s,k,c)\geq 0$ everywhere and that $U_0(s,k,0) = 0$ for all $s\in [0,T]$, $k\in \R$, then the state constrained problem above is equivalent to an exit-time problem from the half-plane $H_+$ defined in~\eqref{eq:vincoloBFFG}.
To see this equivalence, let us define the objective functional associated to the exit-time problem
\begin{equation*}
J(t,k_0;c) \coloneqq \int_t^{T \land \tau} U_0(s,k(s,\xi),c(s,\xi)) \, \dd s, \quad t \in [0,T],
\end{equation*}
where controls $c$ are chosen in the set $\dL^1_{loc}\left((t,T]; \dL^2(\bfS^1; [0,+\infty))\right)$ and
\begin{equation*}
\tau \coloneqq \inf\{s \in (t,T] \text{ s.t. } k(s, \cdot) \in \partial H_+\}.
\end{equation*}
Let us fix $t \in [0,T]$ and $k_0 \in H_+$. On the one hand, if $c \in \cA_{ad}(t, k_0)$, then \emph{a fortiori} $c$ is admissible for the exit-time problem and $\tau > T$. Therefore, the functionals $J_0$ and $J$ coincide for all $c \in \cA_{ad}(t, k_0)$, and hence the value of the state constrained optimal control problem is not greater than the value of the exit-time problem. On the other hand, let us consider an arbitrary control $c \in \dL^1_{loc}\left((t,T]; \dL^2(\bfS^1; [0,+\infty))\right)$ and the constant control equal to zero, denoted by $\bfzero$, which verifies $\bfzero \in \cA_{ad}(t, k_0)$. Let $\tau$ be the first time at which the solution to~\eqref{eq:K} with control $c$ reaches the boundary of $H_+$ and let $\bar c$ be the following control, for $s \in (t,T]$,
\begin{equation*}
\bar c(s) \coloneqq
\begin{dcases}
c(s), &\text{if } s < \tau \land T, \\
\bfzero, &\text{if } s \geq \tau \land T.
\end{dcases}
\end{equation*}
Then, $\bar c \in \cA_{ad}(t, k_0)$ and it is also admissibile for the exit-time problem. Using the assumptions on $U_0$, we can immediately show that the functionals $J_0$ and $J$ coincide with this control, and hence that the value of the exit-time problem is not greater than the value of the constrained optimal control problem. Thus, the two optimization problems are equivalent.

The exit-time optimal control problem can be formulated in an infinite-dimensional stochastic setting as follows.
We call the state variable $X(s) \coloneqq k(s,\cdot)$ and the control variable $u(s) \coloneqq c(s,\cdot)$. We consider a cylindrical Wiener process $W$, as in the beginning of this section, $Q \in \cL^+_1(H)$, and we suppose that $X$ satisfies
\begin{equation}\label{eq:K_abstract}
\begin{dcases}
\dd X(s)=[LX(s)-u(s)] \dd s + \sqrt{Q} \, \dd W(s), & s\in [t,T],\\
X(t)=x\in \dL^2(\bfS^1).
\end{dcases}
\end{equation}
Equation~\eqref{eq:K_abstract} is precisely of the form given in~\eqref{eq:controlledSDE}. We preferred to denote by $L$ (instead of $A$) the operator associated to the linear part of this SDE to maintain the standard notation of datum $A$ in~\eqref{eq:K}, which is related to the economic interpretation of this model (see, e.g., \citep{BFFGjoeg} for more details). It is possible to verify that $L$ generates a $C_0$-semigroup and that it is self-adjoint, so that Hypoteses~\ref{ipotesi}-\eqref{ass:Agenerator}-\eqref{ass:alpha} are verified. We also suppose that $Q$ verifies all other points of Hypothesis~\ref{ipotesi}.
The possibily nonlinear term $b$ appearing in the drift of SDE~\eqref{eq:K_abstract} is simply given by $b(s,x,u) = -u$.
The set of admissible controls is, for given $M>0$,
\begin{align*}
\cA_{ad}^S(t,x) \coloneqq
\Bigl\{ &u \in
\dL^1((t,T]\times \Omega; \dL^2({\bf S^1}; [0,+\infty))) \colon
u \text{ is progressively measurable},
\\
&u(s,\omega)(\xi) \leq M \text{ for all } (s,\xi,\omega)\in [t,T]\times \bfS^1 \times \Omega,
\\
&X^{t,x,u}(s) \in H_+, \, \text{for a.a. } s \in (t,T], \; \bbP\text{-a.s.}\Bigr\},
\end{align*}
Note that we choose bounded controls so that Hypothesis~\ref{assumpt:controlpb}-\eqref{hp:controlpb:b} is verified. 

Next, we consider the functional
\begin{equation}\label{eq:J1}
J_1(t,x;u) \coloneqq
\E\left[
\int_{t}^{T \land \tau}
\ell(s,X(s),u(s)) \, \dd s
\right] ,
\end{equation}
where $\tau$ is the first exit time from the half plane $H_+$ defined in~\eqref{eq:vincoloBFFG} and, for any $s \in [0,T]$, $k \colon \bfS^1 \to \R$, $c \colon \bfS^1 \to \R_+$ (provided that the integral below is well-defined),
\begin{equation*}
\ell(s,k,c) \coloneqq
\int_0^{2\pi} U_0(s,k(\xi),c(\xi)) \, \dd \xi.
\end{equation*}
We assume that $U_0$ is such that Hypothesis~\ref{hyp:funzcost}-\eqref{hyp:l} is verified. Hence, under the above setting our main results
(in particular Theorems \ref{thm:esis_unic_sol_mild}, \ref{thm:esist_strong_sol}, Theorem \ref{th:ver})
apply.

We conclude our example with an important note. We must be clear on the fact that the original problem studied in the economic literature (see, e.g., \citep{BFFGjoeg}
and \cite{GozziLeocata22}) considers an infinite time horizon setting, unbounded controls and functions $U_0$ representing utility functions of the \emph{Constant Elasticity of Substitution}-type, i.e.,
\begin{equation*}
U_0(c) = \dfrac{c^{1-\sigma}}{1-\sigma}, \quad \text{with } \sigma>1.
\end{equation*}
Although it seems possible to extend our result to an infinite time horizon setting and to the case of unbounded controls, dealing with utility functions $U_0$ of such type seems quite difficult at the moment.
All such extensions will be the subject of future research.
\end{example}

\bibliographystyle{plainnat}
\bibliography{Bibliography}

\end{document}